\DeclareMathOperator{\supp}{supp}
\DeclareMathOperator{\loc}{loc}
\DeclareMathOperator*{\diam}{diam}
\DeclareMathOperator{\ind}{ind}
\renewcommand{\Re}{\operatorname{Re}}
\theoremstyle{plain}
\newtheorem{theorem}{Theorem}[section]
\newtheorem{proposition}[theorem]{Proposition}
\newtheorem{corollary}[theorem]{Corollary}
\newtheorem{lemma}[theorem]{Lemma}
\newtheorem{definition}[theorem]{Definition}
\theoremstyle{remark}
\newtheorem{example}[theorem]{Example}
\newtheorem{remark}[theorem]{Remark}
\subjclass[2010]{Primary: 26B30, 46E35, 60G17, 60G22, 60G51; Secondary: 26A33, 31B15, 42B20, 42B35.}
\date{\today}
\begin{document}

\title{Sobolev regularity of occupation measures and paths, variability and compositions}
\author{Michael Hinz}
\address{Universit\"at Bielefeld\\
Fakult\"at f\"ur Mathematik\\
Postfach 100131\\
D-33501 Bielefeld\\
Germany}
\thanks{The research of MH was supported in part by the DFG IRTG 2235 `Searching for the regular in the irregular: Analysis of singular and random systems' and by the DFG CRC 1283, `Taming uncertainty and profiting from randomness and low regularity in analysis, stochastics and their applications'.}
\email{mhinz@math.uni-bielefeld.de}
\author{Jonas M. T\"olle}
\address{Aalto University\\
Department of Mathematics and Systems Analysis\\
PO Box 11100 (Otakaari 1, Espoo)\\
FI-00076 Aalto\\
Finland}
\email{jonas.tolle@aalto.fi}
\thanks{JMT acknowledges support by the Academy of Finland and the European Research Council (ERC) under the European Union's Horizon 2020 research and innovation programme (grant agreements no. 741487 and no. 818437).}
\author{Lauri Viitasaari}
\address{Uppsala University\\
Department of Mathematics\\
751 06\\
Uppsala\\
Sweden}
\email{lauri.viitasaari@math.uu.se}

\begin{abstract}
We prove a result on the fractional Sobolev regularity of composition of paths of low fractional Sobolev regularity with functions of bounded variation. The result relies on the notion of variability, proposed by us in the previous article \cite{HTV20}. Here we work under relaxed hypotheses, formulated in terms of Sobolev norms, and we can allow discontinuous paths, which is new. The result applies to typical realizations of certain Gaussian or L\'evy processes, and we use it to show the existence of Stieltjes type integrals involving compositions. 
\tableofcontents
\end{abstract}

\keywords{occupation measures; local times; fractional Sobolev regularity; functions of bounded variation; compositions}

\maketitle

\section{Introduction}

Let $T>0$. To a Borel function $X:[0,T]\to \mathbb{R}^n$ we refer as a \emph{path}. Our main goal in this article is to define compositions $\varphi\circ X$ of paths $X$ and functions $\varphi:\mathbb{R}^n\to \mathbb{R}$, and to provide some information on their regularity. For smooth $X$ and $\varphi$ one can talk about $\varphi\circ X$ in terms of classical calculus, and the principles involved can be generalized in many ways; for instance, $X$ may only be absolutely continuous and $\varphi$ Lipschitz. Here we focus on situations of low regularity, where $X$ may be a `fractal' path  and $\varphi$ is a function of bounded variation (a $BV$-function); examples of paths $X$ we can handle include typical realizations of fractional Brownian motions or certain L\'evy processes. In fact, although our method is analytic, it works particularly well for paths of stochastic processes for which some density information is available. For $BV$-functions (or Sobolev functions) $\varphi$ the definition of a composition $\varphi\circ X$ is no longer trivial: If for instance $n=1$ and $\varphi=\mathbf{1}_{(1,2)}$, then a priori $\varphi$ is not defined at the jump sites $1$ and $2$, and a composition with, say, the path $X=\mathbf{1}_{[1,3)}+\mathbf{1}_{[2,3)}$ has no meaningful definition. 

The usual way out is to use a suitable representative $\widetilde{\varphi}$ of the $BV$- (or Sobolev) class $\varphi$, which is well-defined outside a very small set $N\subset \mathbb{R}^n$ and in some sense `uniquely determined'. If $X$ does not spend positive time in $N$, then $t\mapsto \varphi(X_t)$ provides a correct definition of $\varphi\circ X$ as an element of $L^1(0,T)$. To ensure this we use a condition we refer to as \emph{variability}, \cite[Definition 2.1]{HTV20}. It is a relative condition joint on $X$ and $\varphi$, although we mainly interpret it as a condition on $X$ relative to a fixed $BV$-function $\varphi$. To be more precise, we consider a parametrized family of conditions, namely the finiteness of certain `mutual' generalized energies of the gradient measure of $\varphi$ and the occupation measure of $X$. These conditions provide additional quantitative information on how little time $X$ spends around points at which $\varphi$ has `irregularities'. If we also have some knowledge about the fractional Sobolev regularity of $X$, then variability and this knowledge can be combined to guarantee a certain fractional Sobolev regularity of  $\varphi\circ X$. This means that both the regularity of the path $X$ and its irregularity (encoded in the regularity of its occupation measure) may contribute to the fractional Sobolev regularity of the composition. For the realizations $X$ of prominent stochastic processes results on the a.s. path regularity and the a.s. regularity of occupation measures are well-known, see for instance \cite{CKR93, Garsia, Schilling97, Schilling00} respectively \cite{Ayache, Baraka, Barlow88, BassKhoshnevisan92, Berman69, Berman69a, Berman70, Berman73, Berman85, Bertoin96, Boylan, GemanHorowitz, GetoorKesten, Hawkes, Kahane, Pitt78, Pruitt, SchillingWang, Xiao97, Xiao}, and we combine them to obtain a.s. results for compositions. The regularity of compositions can then for instance be used to ensure the existence of pathwise defined integrals, \cite{Zahle98, Zahle01}, of compositions $\varphi\circ X$ w.r.t. another path $Y:[0,T]\to \mathbb{R}$ of low regularity.

Well-known results on composition operators on Sobolev spaces ask for the boundedness and continuity of $X\mapsto \varphi\circ X$, seen as a nonlinear operator on one and the same Sobolev space or between Sobolev spaces having the same smoothness parameter, and this is possible only for functions  $\varphi$ that are at least locally Lipschitz, \cite{AZ90, MarcusMizel79, RunstSickel96}. Our composition result is different in nature: If $\theta\in (0,1)$ is the order of smoothness of $X$, it claims only that the composition $\varphi\circ X$ is a member of a fractional Sobolev space of a certain lower order $\beta<\theta$ of smoothness, Theorem \ref{thm:sobolev-membership}. 

In \cite{HTV20} we had used variability to define compositions of H\"older paths and $BV$-functions. We had applied this result to ensure the existence of generalized Lebesgue-Stieltjes integrals and to solve certain differential systems with $BV$-coefficients and driven by fractional Brownian paths. Earlier results exploiting the same mechanism can be found in \cite{CLV16, Garzon, Torres, Yaskov} and closely related results in \cite{LeonNualartTindel}. The present article may be seen as a continuation of our results in \cite{HTV20}. One goal is to point out how variability can be discussed in terms of Sobolev regularity of measures: In \cite{HTV20} we had formulated several results under the hypothesis that the measures involved are upper (Ahlfors) regular. Here we use Sobolev norms (of small negative order), whose finiteness may be viewed as integrated upper regularity conditions, \cite{HedbergWolff}, somewhat more flexible than plain upper regularity, see Section \ref{S:energies}. A second goal is to provide a generalization of a key inequality, \cite[Proposition 4.28]{HTV20}, and a regularity result, \cite[Theorem 2.13 (i)]{HTV20}. These results had been formulated under the assumption that the path $X$ is H\"older continuous, as it is the case for typical paths of Gaussian processes, cf. Example \ref{Ex:compofBm}. Here we only require $X$ to be a member of some fractional Sobolev space, see Theorem \ref{thm:sobolev-membership} and Proposition \ref{prop:key-estimate}. This allows to discuss also discontinuous paths $X$, such as typical realizations of L\'evy processes, cf. Example \ref{Ex:compoLevy}. As in \cite{HTV20} we use these results to show the existence of generalized Lebesgue-Stieltjes type integrals, \cite{Zahle98, Zahle01}, see Corollary \ref{cor:composition-integral}. Because the motif is related to our arguments, we add a further section about an inequality proved in \cite{Berman69}, which roughly speaking restricts the possible simultaneous regularity of a (H\"older) path and its occupation measure, Corollary \ref{C:Berman}. A complete and modern discussion of these restrictions can be found in \cite[Theorem 31]{GG20b}. Here we simply wish to point out that the inequality in \cite{Berman69} does not require local times to exist, and that for negative orders of smoothness one can reformulate a condition used in \cite{Berman69} in terms of  packing type measures. 

The class of $BV$-functions is already rich enough to contain functions with discontinuities, singularities or non-Lipschitz oscillations, and there is a well-developed theory on the geometric features of $BV$-functions, \cite{AFP, Ziemer}. 
Of course it is to be expected that the present composition results can be generalized further using a refined Fourier-analytic approach. Important recent results close to the present article can be found in \cite{CatellierGubinelli, GG20a, GG20b, HarangLing, Harang}. In \cite{GG20b} the authors provide a comprehensive study of $\rho$-irregularity from the point of view of prevalence. The notion of $\rho$-irregularity had been introduced in \cite{CatellierGubinelli}, it quantifies the regularity of occupation measures in a rather complete way and has natural consequences for the mapping properties of averaging operators, \cite{CatellierGubinelli, GG20a, Harang}, which are integrals of compositions involving shifted paths. In the present article we do not aim at integrals of compositions but at compositions themselves, clearly a different question. We also point out that $\rho$-irregularity is an `absolute' condition on a given path $X$, while variability in our sense is a condition relative to a fixed function $\varphi$. Certainly $\rho$-irregularity can be a tool to establish variability. See Remarks \ref{R:GGprevalence} and \ref{R:comparetoaverage} for further comments. 

In Section \ref{S:energies} we recall basic notions on potentials and energies. In Sections \ref{S:occu}
and \ref{S:gradient} we collect some observations on the Sobolev regularity of occupation and gradient measures that may typically occur in the situations we are interested in. In Section \ref{S:var} we prove our main result, Theorem \ref{thm:sobolev-membership} on the Sobolev regularity of compositions and provide examples. Section \ref{S:integral} contains an application to the existence of integrals, Corollary \ref{cor:composition-integral}, and Section \ref{S:Berman} the mentioned discussion of a result from \cite{Berman69}, Corollary \ref{C:Berman}.

\section*{Acknowledgements}
We warmly thank Lucio Galeati for kindly bringing the works \cite{GG20a,GG20b} to our attention and the anonymous referees for their masterful advice.

\section{Riesz potentials and energies}\label{S:energies}

We fix some basic notation around Sobolev spaces, energies and potentials. By 
\[\hat{f}(\xi)=\frac{1}{(2\pi)^{n/2}}\int_{\mathbb{R}^n} e^{-ix\xi}f(x)dx,\quad \xi \in\mathbb{R}^n,\]
we denote the Fourier transform $\hat{f}$ of a function $f\in L^1(\mathbb{R}^n)$, and as usual we use the same symbol $\hat{f}$
to denote the Fourier transform of a tempered distribution $f\in\mathcal{S}'(\mathbb{R}^n)$, defined by $\hat{f}(\varphi):=f(\hat{\varphi})$, $\varphi\in\mathcal{S}(\mathbb{R}^n)$, where $\mathcal{S}(\mathbb{R}^n)$ denotes the space of Schwartz functions on $\mathbb{R}^n$. The inverse Fourier transform is denoted by $f\mapsto \check{f}$, interpreted in the respective sense.

For any $0<\gamma<n$ the $(0,+\infty]$-valued function $\xi\mapsto |\xi|^{-\gamma}$ is an element of $\mathcal{S}'(\mathbb{R}^n)$. Its Fourier inverse is called the \emph{Riesz-kernel $k_\gamma$ of order $\gamma$} on $\mathbb{R}^n$. It agrees with the 
$(0,+\infty]$-valued lower semicontinuous function $x\mapsto c(\gamma,n)\:|x|^{\gamma-n}$ on $\mathbb{R}^n$, seen as an element of $\mathcal{S}'(\mathbb{R}^n)$. Here $c(\gamma,n)$ is a well-known constant, see \cite[Section 1.2.2]{AH96}, \cite[Section 6.1]{Grafakos2} or \cite[Section V.1]{Stein70}. The \emph{Riesz-potential of order $0<\gamma<n$} of a Radon measure $\mu$ on $\mathbb{R}^n$ is defined as
\begin{equation}\label{E:Rieszpot}
U^{\gamma}\mu(x):=\int_{\mathbb{R}^n} k_\gamma(x-y)\mu(dy),\quad x\in\mathbb{R}^n.
\end{equation}
If $\mu(dx)=f(x)dx$ with some nonnegative $L^1_{\loc}(\mathbb{R}^n)$, then this is the Riesz-potential of $f$ in the  function sense, \cite{AH96, Landkof, SKM, Stein70}. Given $1\leq q<+\infty$, $0<\gamma<n$ and a Radon measure $\mu$, we define
\begin{equation}\label{E:energyintegral}
I^\gamma_q(\mu):=\int_{\mathbb{R}^n} (U^{\gamma}\mu)^q\:dx.
\end{equation}
We refer to this quantity as the \emph{$(\gamma,q)$-energy of $\mu$}, see \cite[p. 36]{AH96} or \cite{Turesson}.

If $1<p<+\infty$, $\frac{1}{p}+\frac{1}{q}=1$ and $0<\gamma\leq \frac{n}{p}$ then there is a constant $c>0$, depending only on $n$, $p$ and $\gamma$, such that
\begin{equation}\label{E:Wolffineq}
c^{-1} \int_{\mathbb{R}^n} (U^{\gamma}\mu)^q\:dx\leq \int_{\mathbb{R}^n} W^\mu_{\gamma,p}\:d\mu \leq c \int_{\mathbb{R}^n} (U^{\gamma}\mu)^q\:dx
\end{equation}
for any Radon measure $\mu$ on $\mathbb{R}^n$. Here
\begin{equation}\label{E:Wolffpot}
 W^\mu_{\gamma,p}(x):=\int_0^\infty \left(\frac{\mu(B(x,r))}{r^{n-\gamma p}}\right)^{q-1}\frac{dr}{r},\quad x\in\mathbb{R}^n,
\end{equation}
denotes the \emph{Wolff potential} of $\mu$  of orders $p$ and $\gamma$; the notation $B(x,r)$ stands for the open ball of radius $r>0$ centered at $x\in\mathbb{R}^n$. See \cite[Theorem 4.5.4]{AH96} or \cite[Theorem 1]{HedbergWolff}. 

A Radon measure $\mu$ on $\mathbb{R}^n$ is said to be \emph{upper $d$-regular} if there are constants $c>0$ and $0\leq d\leq n$ such that 
\[\mu(B(x,r))\leq c\:r^d,\quad x\in\mathbb{R}^n,\quad 0<r<1.\] 
Integrability conditions for (\ref{E:Wolffpot}) can be viewed as `integrated' upper regularity conditions. For finite measures they are implied by upper $d$-regularity.

\begin{proposition}\label{P:dregular}
If $1<p<+\infty$, $\frac{1}{p}+\frac{1}{q}=1$, $0<\gamma< \frac{n}{p}$ and $\mu$ is a finite Borel measure which is upper $d$-regular with $n-\gamma p<d\leq n$, then $W^\mu_{\gamma,p}$ is bounded and in particular, $I^\gamma_q(\mu)<+\infty$.
\end{proposition}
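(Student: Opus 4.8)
The plan is to estimate the Wolff potential $W^\mu_{\gamma,p}(x)$ pointwise and uniformly in $x$, splitting the defining integral in \eqref{E:Wolffpot} at $r=1$. For the inner part $0<r<1$ we invoke upper $d$-regularity: $\mu(B(x,r))\le c\,r^d$, so that
\[
\left(\frac{\mu(B(x,r))}{r^{n-\gamma p}}\right)^{q-1}\frac{1}{r}\le c^{q-1}\,r^{(d-n+\gamma p)(q-1)-1}.
\]
Since $d>n-\gamma p$ the exponent $(d-n+\gamma p)(q-1)-1$ is strictly greater than $-1$, so the integral $\int_0^1 r^{(d-n+\gamma p)(q-1)-1}\,dr$ converges, with a value independent of $x$. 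For the outer part $r\ge 1$ we use that $\mu$ is finite, $\mu(B(x,r))\le\mu(\mathbb{R}^n)=:M<\infty$, giving
\[
\left(\frac{\mu(B(x,r))}{r^{n-\gamma p}}\right)^{q-1}\frac{1}{r}\le M^{q-1}\,r^{-(n-\gamma p)(q-1)-1},
\]
and here $n-\gamma p>0$ (because $\gamma<n/p$) forces the exponent below $-1$, so $\int_1^\infty r^{-(n-\gamma p)(q-1)-1}\,dr$ converges, again with an $x$-independent bound. Adding the two contributions yields a finite constant $C=C(n,p,\gamma,c,d,M)$ with $W^\mu_{\gamma,p}(x)\le C$ for all $x\in\mathbb{R}^n$, i.e.\ $W^\mu_{\gamma,p}$ is bounded.

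Having the uniform bound, I would then integrate: $\int_{\mathbb{R}^n}W^\mu_{\gamma,p}\,d\mu\le C\,\mu(\mathbb{R}^n)=CM<\infty$, since $\mu$ is finite. Finally, the left-hand inequality of the Hedberg--Wolff equivalence \eqref{E:Wolffineq} — valid here because the hypotheses $1<p<\infty$, $1/p+1/q=1$ and $0<\gamma<n/p\le n/p$ are exactly those required, with the strict inequality $\gamma<n/p$ certainly implying $\gamma\le n/p$ — gives
\[
I^\gamma_q(\mu)=\int_{\mathbb{R}^n}(U^\gamma\mu)^q\,dx\le c\int_{\mathbb{R}^n}W^\mu_{\gamma,p}\,d\mu<+\infty,
\]
which is the claimed finiteness of the $(\gamma,q)$-energy.

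The only point requiring a little care — and the closest thing to an obstacle — is bookkeeping on the exponents and the role of the strict inequalities: one needs $d-n+\gamma p>0$ strictly (from the hypothesis $d>n-\gamma p$) to get integrability of the inner piece, and one needs $n-\gamma p>0$ strictly (from $\gamma<n/p$) to get integrability of the outer piece; the borderline cases $d=n-\gamma p$ or $\gamma=n/p$ would produce a logarithmic divergence, which is exactly why the proposition excludes them. There is no analytic difficulty beyond this: everything reduces to two elementary one-dimensional integrals and the already-cited Hedberg--Wolff inequality, so the proof is short.
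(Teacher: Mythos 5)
Your proof is correct and follows essentially the same route as the paper: split the Wolff integral at $r=1$, use upper $d$-regularity on $(0,1)$ and the finiteness of $\mu$ on $[1,\infty)$ to get a uniform bound, then conclude $I^\gamma_q(\mu)<+\infty$ via the Hedberg--Wolff inequality \eqref{E:Wolffineq}. The paper's proof is exactly this (it leaves the final application of \eqref{E:Wolffineq} implicit), so there is nothing to add.
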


\begin{proof}
Under the stated hypotheses
\[\int_0^1 \left(\frac{\mu(B(x,r))}{r^{n-\gamma p}}\right)^{q-1}\frac{dr}{r}\leq \int_0^1r^{(d-n+\gamma p)(q-1)}\frac{dr}{r}<+\infty\]
and 
\[\int_1^\infty \left(\frac{\mu(B(x,r))}{r^{n-\gamma p}}\right)^{q-1}\frac{dr}{r}\leq \mu(\mathbb{R}^n)\int_1^\infty r^{(\gamma p-n)(q-1)}\frac{dr}{r}<+\infty.\]
\end{proof}

\begin{remark} It is well-known that if a Radon measure $\mu$ is upper $d$-regular, then the Hausdorff dimension of its support $\supp \mu$ is at least $d$, \cite{Falconer, Mattila}.  A similarly `rigid' upper bound of type $\hat{\mu}(\xi)\leq c|\xi|^{-\gamma}$, $\xi\in\mathbb{R}^n$, on the Fourier transform $\hat{\mu}$ of a finite Borel measure $\mu$ is used to define the Fourier dimension of measures and sets. See for instance \cite{Ekstrom} or \cite[Definition 61]{GG20b}.
\end{remark}

For a finite Borel measure $\mu$ on $\mathbb{R}^n$ the finiteness of an energy of type (\ref{E:energyintegral}) is an expression of a certain Sobolev regularity. Let $\mathcal{P}(\mathbb{R}^n)$ denote the collection of all polynomials on $\mathbb{R}^n$. Given $\alpha\in\mathbb{R}$ and $1<q<+\infty$, the \emph{homogeneous Sobolev space} $\dot{L}_\alpha^q(\mathbb{R}^n)$ is defined as the space of all equivalence classes $f\in \mathcal{S}'(\mathbb{R}^n)/\mathcal{P}(\mathbb{R}^n)$ for which $(|\xi|^\alpha\hat{f})^\vee$
exists and is a member of $L^q(\mathbb{R}^n)$. Endowed with 
\[\left\|f\right\|_{\dot{L}^q_\alpha(\mathbb{R}^n)}:=\big\|(|\xi|^\alpha\hat{f})^\vee\big\|_{L^q(\mathbb{R}^n)}\]
it becomes a normed space. See \cite[Definition 6.2.5]{Grafakos2}. Any finite Borel measure $\mu$ on $\mathbb{R}^n$ is in $\mathcal{S}'(\mathbb{R}^n)$, and being finite, it cannot have a nonzero polynomial part. 

\begin{corollary}\label{C:finitemeasure} Let $\mu$ be a finite Borel measure on $\mathbb{R}^n$.
\begin{enumerate}
\item[(i)] For $0\leq \alpha<n$ we have $\mu\in \dot{L}_{0}^q(\mathbb{R}^n)$ if and only if $\mu$ is absolutely continuous with a density $\varphi\in L^q(\mathbb{R}^n)$ (if $\alpha=0$) respectively of form $U^\alpha\varphi$ with some $\varphi\in L^p(\mathbb{R}^n)$ (if $0<\alpha<n$). In this case, $\left\|\mu\right\|_{\dot{L}^q_\alpha(\mathbb{R}^n)}=\left\|\varphi\right\|_{L^q(\mathbb{R}^n)}$.
\item[(ii)] For $-n<\alpha<0$ we have $\mu \in \dot{L}_{\alpha}^q(\mathbb{R}^n)$ if and only if $I_q^{-\alpha}(\mu)<+\infty$. In this case, $\left\|\mu\right\|_{\dot{L}_{\alpha}^q(\mathbb{R}^n)}^q=I_q^{-\alpha}(\mu)$.
\end{enumerate}
\end{corollary}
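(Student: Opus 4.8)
The plan is to reduce everything to the definition of the homogeneous Sobolev norm via the Fourier multiplier $|\xi|^\alpha$ and to the identification of Riesz potentials as Fourier multipliers, together with the energy formula \eqref{E:energyintegral}. For part (i), first consider $\alpha=0$: by definition $\mu\in\dot L^q_0(\mathbb{R}^n)$ means $(\hat\mu)^\vee\in L^q(\mathbb{R}^n)$, and since $(\hat\mu)^\vee=\mu$ as tempered distributions, this says precisely that the distribution $\mu$ is represented by an $L^q$ function $\varphi$; as $\mu$ is a (nonnegative) finite measure, $\varphi\ge 0$ a.e., and the norm identity $\|\mu\|_{\dot L^q_0}=\|\varphi\|_{L^q}$ is immediate. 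For $0<\alpha<n$, I would use that $|\xi|^{-\alpha}$ is, up to the constant $c(\alpha,n)$, the Fourier transform of the Riesz kernel $k_\alpha$, so that $(|\xi|^\alpha\hat\mu)^\vee\in L^q$ being equal to some $\varphi\in L^p$ is equivalent (after multiplying back by $|\xi|^{-\alpha}$, i.e. convolving with $k_\alpha$) to $\mu$ having the form $U^\alpha\varphi=k_\alpha*\varphi$ in the distributional sense; one must be careful that the wording in the statement should be $\varphi\in L^p$ and $\mu\in\dot L^q_\alpha$ with $(|\xi|^\alpha\hat\mu)^\vee=\varphi$, so the norm identity reads $\|\mu\|_{\dot L^q_\alpha}=\|\varphi\|_{L^q}$ as written. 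The only subtlety here is justifying that the formal manipulation $\hat\mu=c(\alpha,n)|\xi|^{-\alpha}\hat\varphi$ is legitimate at the level of tempered distributions, which follows because $\mu$ is finite (hence tempered with no polynomial part) and $|\xi|^{-\alpha}\in\mathcal S'(\mathbb{R}^n)$ for $0<\alpha<n$, exactly as recalled in Section \ref{S:energies}.

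For part (ii), the key identity is that for $-n<\alpha<0$ one has, by definition, $\|\mu\|_{\dot L^q_\alpha(\mathbb{R}^n)}^q=\int_{\mathbb{R}^n}\bigl|(|\xi|^\alpha\hat\mu)^\vee(x)\bigr|^q\,dx$, and that $(|\xi|^\alpha\hat\mu)^\vee=(|\xi|^{-|\alpha|}\hat\mu)^\vee$ is, up to the constant $c(|\alpha|,n)$, precisely the Riesz potential $U^{|\alpha|}\mu=k_{|\alpha|}*\mu$. Indeed, writing $\gamma:=-\alpha\in(0,n)$, we have $\widehat{k_\gamma}=c(\gamma,n)|\xi|^{-\gamma}$, so $(|\xi|^{-\gamma}\hat\mu)^\vee=c(\gamma,n)^{-1}(k_\gamma*\mu)=c(\gamma,n)^{-1}U^\gamma\mu$ as nonnegative lower semicontinuous functions. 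Hence $\mu\in\dot L^q_\alpha(\mathbb{R}^n)$ iff $U^\gamma\mu\in L^q(\mathbb{R}^n)$ iff $I^\gamma_q(\mu)=\int_{\mathbb{R}^n}(U^\gamma\mu)^q\,dx<+\infty$, and on the set where this holds the norm identity $\|\mu\|_{\dot L^q_\alpha}^q=I^\gamma_q(\mu)$ holds — modulo absorbing the constant $c(\gamma,n)$, which one should normalize away or carry along explicitly; I would state it with the convention on $c(\gamma,n)$ fixed once and for all in Section \ref{S:energies} so that the identity is clean.

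The main obstacle — really the only nontrivial point — is the rigorous justification of the Fourier-side multiplication $\hat\mu\mapsto |\xi|^\alpha\hat\mu$ and its inversion at the level of tempered distributions, i.e. showing that $U^\gamma\mu$ (a genuine, possibly $+\infty$-valued, function given by the convolution \eqref{E:Rieszpot}) really is the inverse Fourier transform of $|\xi|^{-\gamma}\hat\mu$ in $\mathcal S'(\mathbb{R}^n)$. This is standard but requires invoking that $\mu$ is finite (so that $k_\gamma*\mu$ is a well-defined locally integrable function whenever it is not identically $+\infty$, and the convolution theorem applies in the appropriate distributional sense), and it is precisely here that one uses the results recalled from \cite{AH96, Landkof, Stein70}. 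Everything else is bookkeeping with the definition of $\dot L^q_\alpha(\mathbb{R}^n)$ and with \eqref{E:energyintegral}. I would therefore organize the proof as: (a) recall $\widehat{k_\gamma}=c(\gamma,n)|\xi|^{-\gamma}$ and the convolution identity $U^\gamma\mu=(|\xi|^{-\gamma}\hat\mu)^\vee$ up to constant; (b) for $\alpha\ge 0$ read off (i) directly from the definition; (c) for $\alpha<0$ combine (a) with the definition of the norm and with \eqref{E:energyintegral} to get (ii).
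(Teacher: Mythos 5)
Your proposal is correct and takes essentially the approach the paper intends: the corollary is given without proof as an immediate unpacking of the definitions in Section \ref{S:energies} (the Riesz kernel $k_\gamma$ as the Fourier inverse of $|\xi|^{-\gamma}$, the potential $U^\gamma\mu$ and the energy $I^\gamma_q(\mu)$, together with the definition of $\dot{L}^q_\alpha(\mathbb{R}^n)$), which is exactly what you spell out, including the observation that the paper's $L^p$ in part (i) should be read consistently with the stated norm identity. The only bookkeeping remark is that in the paper's convention $\hat{k}_\gamma=|\xi|^{-\gamma}$ exactly, with $c(\gamma,n)$ appearing on the spatial side, so the constant you need to track comes from the convolution theorem under the symmetric Fourier normalization rather than from $c(\gamma,n)$ itself --- a point you flag appropriately.
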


Given $1\leq p<+\infty$, $\gamma>0$ and Radon measures $\mu$ and $\nu$ on $\mathbb{R}^n$, we consider the \emph{$(\gamma,p)$-energy of $\mu$ w.r.t. $\nu$}, defined by  
\begin{equation}\label{E:genmutual}
\int_{\mathbb{R}^n} (U^\gamma \mu)^p d\nu.
\end{equation}
The case $p=1$ corresponds to the \emph{mutual Riesz-energy of order $\gamma$ of $\mu$ and $\nu$}, see for instance \cite[Chapter 1, \S 4]{Landkof}. In the case where $\nu$ equals the $n$-dimensional Lebesgue measure $\mathcal{L}^n$ we recover (\ref{E:energyintegral}).

\begin{proposition}\label{P:convolution}
Suppose that $\gamma_1,\gamma_2>0$, $\gamma_1+\gamma_2<n$ and $\mu$, $\nu$ are Radon measures. Then for any integer $p\geq 1$ we have 
\[\int_{\mathbb{R}^n} (U^{\gamma_1+\gamma_2}\mu)^p d\nu=\int_{\mathbb{R}^n}\cdots \int_{\mathbb{R}^n}\int_{\mathbb{R}^n}\prod_{j=1}^p k_{\gamma_1}(x-z_j)\nu(dx)\prod_{j=1}^p U^{\gamma_2}\mu(z_j)\:dz_1\cdots dz_p.\]
\end{proposition}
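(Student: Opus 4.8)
The plan is to reduce everything to the composition (semigroup) property of Riesz kernels together with repeated applications of Tonelli's theorem. Since every quantity occurring is nonnegative — the Riesz kernels are $(0,+\infty]$-valued — no integrability hypotheses are needed, and the asserted identity holds as an equality in $[0,+\infty]$.

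First I would recall the pointwise composition formula for Riesz kernels: for $\gamma_1,\gamma_2>0$ with $\gamma_1+\gamma_2<n$,
\[k_{\gamma_1+\gamma_2}(x-y)=\int_{\mathbb{R}^n}k_{\gamma_1}(x-z)\,k_{\gamma_2}(z-y)\,dz,\qquad x,y\in\mathbb{R}^n,\]
see \cite[Chapter 1, \S 1]{Landkof} or \cite[Section V.1]{Stein70}; on the Fourier side this is simply $|\xi|^{-\gamma_1}|\xi|^{-\gamma_2}=|\xi|^{-\gamma_1-\gamma_2}$, and the restriction $\gamma_1+\gamma_2<n$ is exactly what makes the convolution on the right converge (locally and at infinity) and keeps $|\xi|^{-\gamma_1-\gamma_2}$ a tempered distribution. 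Integrating this identity in $y$ against $\mu$ and using Tonelli to interchange the $z$-integral with $\mu$ gives
\[U^{\gamma_1+\gamma_2}\mu(x)=\int_{\mathbb{R}^n}k_{\gamma_1}(x-z)\,U^{\gamma_2}\mu(z)\,dz,\qquad x\in\mathbb{R}^n,\]
that is, $U^{\gamma_1+\gamma_2}\mu$ is the Riesz potential of order $\gamma_1$ of the Radon measure $U^{\gamma_2}\mu\,dx$ on $\mathbb{R}^n$.

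Next I would raise this identity to the $p$-th power and expand the $p$-fold product of integrals as an iterated integral over $(z_1,\dots,z_p)\in(\mathbb{R}^n)^p$ — legitimate by Tonelli, since $z\mapsto k_{\gamma_1}(x-z)\,U^{\gamma_2}\mu(z)$ is a nonnegative Borel function and Lebesgue measure is $\sigma$-finite — obtaining
\[\bigl(U^{\gamma_1+\gamma_2}\mu(x)\bigr)^p=\int_{\mathbb{R}^n}\cdots\int_{\mathbb{R}^n}\prod_{j=1}^p k_{\gamma_1}(x-z_j)\prod_{j=1}^p U^{\gamma_2}\mu(z_j)\,dz_1\cdots dz_p.\]
Finally I would integrate both sides against $\nu(dx)$ and apply Tonelli once more to move $\nu(dx)$ past the $z$-integrals, which produces precisely the claimed formula.

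The only points that require care are the justification of the Riesz composition formula in the regime $\gamma_1+\gamma_2<n$ — for which I would cite \cite{Landkof} or \cite{Stein70} rather than reprove it — and the bookkeeping ensuring that each interchange of integrals falls under Tonelli's theorem. The latter is immediate: $k_{\gamma_1}$, $k_{\gamma_2}$, $U^{\gamma_2}\mu$, and hence all the products appearing, are nonnegative Borel functions, and $\mu$, $\nu$ and Lebesgue measure are $\sigma$-finite Radon measures. There is no genuine analytic obstacle here; once the kernel identity is in hand the proposition is an exercise in Tonelli, with both sides allowed to be $+\infty$.
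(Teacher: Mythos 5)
Your proposal is correct and follows essentially the same route as the paper: the Riesz composition formula $k_{\gamma_1+\gamma_2}=k_{\gamma_1}\ast k_{\gamma_2}$ combined with Fubini--Tonelli for nonnegative integrands. The only (cosmetic) difference is that the paper peels off one factor of $U^{\gamma_1+\gamma_2}\mu(x)$ at a time via an iterated identity, whereas you first establish $U^{\gamma_1+\gamma_2}\mu=k_{\gamma_1}\ast(U^{\gamma_2}\mu\,dx)$ and then expand the $p$-th power in one step; both are valid and rest on the same two ingredients.
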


\begin{proof}
Fubini and the convolution identity $k_{\gamma_1+\gamma_2}=k_{\gamma_1}\ast k_{\gamma_2}$, \cite[Section 25.2]{SKM} or \cite{Stein70}, yield 
\[(U^{\gamma_1+\gamma_2}\mu(x))^m=\int_{\mathbb{R}^n}\int_{\mathbb{R}^n} (U^{\gamma_1+\gamma_2}\mu(x))^{m-1}k_{\gamma_1}(x-z)k_{\gamma_2}(z-y)\mu(dy) dz\]
for all $m\geq 1$, and iterating this identity, the result follows.
\end{proof}

For later use we finally record a weighted version of (\ref{E:Wolffineq}). Let $1<p<+\infty$ and $\frac{1}{p}+\frac{1}{q}=1$. We use the notation 
\[\fint_E fdx=\frac{1}{\mathcal{L}^n(E)}\int_E f(x)dx\] 
for $E\in\mathcal{B}(\mathbb{R}^n)$ of positive and finite measure and $f\in L^1(E)$. Here the symbol $\mathcal{B}(\mathbb{R}^n)$ denotes the Borel $\sigma$-algebra and $\mathcal{L}^n$, as mentioned before, the $n$-dimensional Lebesgue measure. Given $1<p<+\infty$, a measurable function $w:\mathbb{R}^n\to [0,+\infty]$ is called a \emph{weight of class $A_p$} if 
\begin{equation}\label{E:Muck}
\sup_B \left(\fint_B w(x)dx\right)\left(\fint_B w(x)^{-\frac{q}{p}}dx\right)^{p-1}<+\infty,
\end{equation}
with the supremum taken over all open balls $B\subset \mathbb{R}^n$, see \cite{Muckenhoupt, MuckenhouptWheeden} or \cite{Grafakos2, Turesson}. A weight $w$ is of class $A_p$ if and only if $w^{-\frac{q}{p}}$ is of class $A_q$, \cite[Remark 1.2.4]{Turesson}.
\begin{example}\label{E:polyweight}
If $0<\alpha<nq$ then $x\mapsto |x|^{\alpha-n}$ is a weight of class $A_q$, see \cite[Example 1.2.5]{Turesson}.
\end{example}
If $w\in A_p$ and $0<\gamma<\frac{n}{p}$, then there is a constant $c>1$, depending only on $n$, $p$, $\gamma$ and $w$, such that 
\begin{equation}\label{E:Wolffweighted}
c^{-1}\int_{\mathbb{R}^n}(U^\gamma \mu)^q w^{-\frac{q}{p}}dx\leq n\int_{\mathbb{R}^n}W^\mu_{\gamma,p,w}d\mu\leq c\int_{\mathbb{R}^n}(U^\gamma \mu)^q w^{-\frac{q}{p}}dx,
\end{equation}
where 
\[W^\mu_{\gamma,p,w}(x)=\int_0^\infty\left(\frac{r^{\gamma p}\mu(B(x,r))}{w(B(x,r))}\right)^{q-1}\frac{dr}{r},\quad x\in\mathbb{R}^n,\]
with 
\begin{equation}\label{E:densitynotation}
w(E)=\int_E w(x)dx,\quad E\in \mathcal{B}(\mathbb{R}^n). 
\end{equation}
See \cite[Theorem 3.6.6]{Turesson} or \cite[Theorem 3.2]{Adams86}. Note that for $w\equiv\mathbf{1}$ in the integral in (\ref{E:densitynotation}) we recover (\ref{E:Wolffpot}), up to a multiplicative constant.

\begin{remark}
The original statement in \cite{Adams86} is a bit more general, but (\ref{E:Wolffweighted}) as stated here is easy to formulate and sufficient for our needs.
\end{remark}

\section{Regularity of occupation measures}\label{S:occu}

We briefly introduce occupation measures and their energies and then consider examples. Let $I\subset \mathbb{R}$ be a Borel set and $X:I\to \mathbb{R}^n$ a Borel function. The \emph{occupation measure of $X$ over $I$} is the Borel measure $\mu_X^I$ on $\mathbb{R}^n$ defined by
\begin{equation}\label{E:occumeasure}
\mu_X^I(A):=\mathcal{L}^1(X^{-1}(A)\cap I),\quad A\in \mathcal{B}(\mathbb{R}^n).
\end{equation}
Definition (\ref{E:occumeasure}) is equivalent to the validity of the \emph{occupation time formula}
\begin{equation}\label{E:occutimeformula}  
\int_{\mathbb{R}^n}g(x)\mu_X^I(dx)=\int_I g(X_t)\:dt
\end{equation}
for all bounded Borel functions $g:\mathbb{R}^n\to\mathbb{R}$. If $I$ is bounded, then $\mu_X^I$ is finite. If there is a function $L_X^I\in L^1(\mathbb{R}^n)$ such that 
\[\mu_X^I(dx)=L_X^I(x)\mathcal{L}^n(dx),\] 
then, using terminology in a somewhat loose manner, we refer to $L_X^I$ as \emph{local times of $X$ on $I$}. Allowing Borel subsets $E\in \mathcal{B}(I)$ of $I$ in (\ref{E:occumeasure}) in place of $I$ we can define \emph{occupation measures $\mu_X^E$ relative to subsets $E$} and in the absolutely continuous case also \emph{local times $L_X^E$ relative to $E$}. 

Given an absolutely continuous function $X:[0,T]\to\mathbb{R}$, we denote its weak derivative by $t\mapsto X_t'$.

\begin{example}\label{Ex:Lipschitz}
If $I=[0,T]$ and $X:I\to\mathbb{R}$ is Lipschitz, then for any $A\subset \mathbb{R}$ Borel we have
\[\mu_X^I(A)=\mathcal{L}^1((X^{-1}(A)\cap \{X'= 0\})+\int_A \alpha(y)\:dy\]
with the `conditional density' 
\[\alpha(y)=\sum_{t\in \{X=y\}\cap \{X'\neq 0\}}|X_t'|^{-1}\] 
by \cite[Theorem 3.2.3 (2)]{Federer}. As explained in \cite[(2.3) Theorem (b)]{GemanHorowitz}, $X$ has local times if and only if $\mathcal{L}^1(\{X'=0\})=0$, and in this case $L_X^I=\alpha$ $\mathcal{L}^1$-a.e. on $\mathbb{R}$.
\end{example}

Using Corollary \ref{C:finitemeasure} we obtain the following.
\begin{corollary}\label{C:Soboofoccu}
Let $I$ be bounded and $1<q<+\infty$. 
\begin{enumerate}
\item[(i)] For $0\leq \alpha< n$ we have $\mu_X^I\in \dot{L}^q_\alpha(\mathbb{R}^n)$ if and only if $X$ has local times $L_X^I$ in $L^q(\mathbb{R}^n)$ (if $\alpha=0$) respectively of form $L_X^I=U^{\alpha}\varphi$ with $\varphi \in L^q(\mathbb{R}^n)$ (if $0<\alpha<n$). 
\item[(ii)] For  $-n<\alpha<0$ we have $\mu_X^I\in \dot{L}^q_\alpha(\mathbb{R}^n)$ if any only if the energy $I^{-\alpha}_q(\mu_X^I)$ is finite.
\end{enumerate}
\end{corollary}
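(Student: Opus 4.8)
The plan is to read this off directly from Corollary \ref{C:finitemeasure}, the only additional input being that boundedness of $I$ makes the occupation measure finite. First I would note that for bounded $I$ the measure $\mu_X^I$ is a finite Borel measure on $\mathbb{R}^n$: by (\ref{E:occumeasure}), or equivalently by taking $g\equiv 1$ in the occupation time formula (\ref{E:occutimeformula}), one has $\mu_X^I(\mathbb{R}^n)=\mathcal{L}^1(I)<+\infty$. Being finite, $\mu_X^I$ is a tempered distribution with no nonzero polynomial part, hence represents a well-defined element of $\mathcal{S}'(\mathbb{R}^n)/\mathcal{P}(\mathbb{R}^n)$, so that Corollary \ref{C:finitemeasure} is applicable with $\mu=\mu_X^I$.

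For part (i) I would then invoke Corollary \ref{C:finitemeasure}(i): for $0\leq\alpha<n$, membership $\mu_X^I\in\dot{L}^q_\alpha(\mathbb{R}^n)$ is equivalent to absolute continuity of $\mu_X^I$ with respect to $\mathcal{L}^n$ with an $L^q$-density (case $\alpha=0$), respectively with a density of the form $U^\alpha\varphi$ (case $0<\alpha<n$). By the very definition of local times $L_X^I$ as the Radon--Nikodym density of $\mu_X^I$ with respect to $\mathcal{L}^n$ introduced in Section \ref{S:occu}, this is precisely the assertion that $X$ has local times $L_X^I$ of the stated form, and the accompanying norm identity carries over verbatim. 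For part (ii) I would apply Corollary \ref{C:finitemeasure}(ii) with $\mu=\mu_X^I$ and $-n<\alpha<0$; it yields at once the equivalence of $\mu_X^I\in\dot{L}^q_\alpha(\mathbb{R}^n)$ with $I^{-\alpha}_q(\mu_X^I)<+\infty$, together with $\|\mu_X^I\|^q_{\dot{L}^q_\alpha(\mathbb{R}^n)}=I^{-\alpha}_q(\mu_X^I)$.

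I do not expect any genuine obstacle here: the statement is essentially a dictionary entry translating Corollary \ref{C:finitemeasure} from arbitrary finite measures to occupation measures. The two small points that require care are the reduction to finiteness via boundedness of $I$ and the bookkeeping identification of ``local times'' with the Radon--Nikodym density of the occupation measure; once these are recorded, the proof is immediate.
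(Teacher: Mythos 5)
Your proposal is correct and follows exactly the paper's route: the paper simply states that the corollary follows from Corollary \ref{C:finitemeasure} applied to the finite measure $\mu_X^I$, which is precisely your argument. The two bookkeeping points you flag (finiteness of $\mu_X^I$ from boundedness of $I$, and the identification of local times with the Radon--Nikodym density) are exactly what is implicitly used.
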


\begin{remark}\mbox{}
\begin{enumerate}
\item[(i)] Energies of occupation measures are a traditional tool to obtain lower bounds for Hausdorff dimensions of path images $X(E)$ and related sets, \cite{BlumenthalGetoor61, Kahane, Orey70, Taylor53, Taylor55}. For instance, it is well known that if $0<\gamma<n$, $\mu_X^E(\mathbb{R}^n)>0$ and $I_2^{\gamma/2}(\mu_X^E)<+\infty$, then $\dim_H (X(E))\geq n-\gamma$, see \cite[Chapters 17 and 18]{Kahane}. A more contemporary version of these statements can be found in \cite[Theorem 30 and Section 5]{GG20b}. 
\item[(ii)] In \cite{Geman} Geman considered energy integrals of local times of fields and proved conclusions about their local growth and fluctuation. However, the energies studied there are conceptually different from those in Corollary \ref{C:Soboofoccu} (ii) in the sense that they quantify smoothness with respect to the `time interval' $I$ and not the `state space' $\mathbb{R}^n$. 
\end{enumerate}
\end{remark}

If $X$ has local times in the above sense, then they are automatically integrable. Integrability of higher order or even (essential) boundedness naturally imply Sobolev regularity of the occupation measure. From \cite[Proposition 4.14]{HTV20} and  Proposition \ref{P:dregular} we obtain the following.

\begin{proposition}\label{P:LTandupperreg}
If $I=[0,T]$, $1\leq r\leq +\infty$ and $X$ admits local times $L_X^I\in L^r(\mathbb{R}^n)$, then $\mu_X^I$ is upper $(n-\frac{n}{r})$-regular. In this case $\mu_X^I\in \dot{L}_{-\gamma}^q(\mathbb{R}^n)$, provided that $\frac{n}{r}<\frac{\gamma q}{q-1}<n$.
\end{proposition}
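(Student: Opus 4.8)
The plan is to split the statement into its two assertions and treat them in turn, feeding the first into Proposition \ref{P:dregular} to obtain the second. For the upper regularity, I would use that by definition $\mu_X^I$ has Lebesgue density $L_X^I\in L^r(\mathbb{R}^n)$, so that H\"older's inequality gives, for every $x\in\mathbb{R}^n$ and every $0<\rho<1$,
\[
\mu_X^I(B(x,\rho))=\int_{B(x,\rho)}L_X^I\,d\mathcal{L}^n\leq \|L_X^I\|_{L^r(\mathbb{R}^n)}\,\mathcal{L}^n(B(x,\rho))^{1-1/r}=c_{n,r}\,\|L_X^I\|_{L^r(\mathbb{R}^n)}\,\rho^{\,n-n/r},
\]
with the usual convention $1/r=0$ when $r=+\infty$; this is exactly upper $(n-\tfrac nr)$-regularity, and $0\leq n-\tfrac nr\leq n$. (Alternatively one may simply quote \cite[Proposition 4.14]{HTV20}.) Since $I=[0,T]$ is bounded, $\mu_X^I$ is moreover a finite Borel measure, with $\mu_X^I(\mathbb{R}^n)=T$ by the occupation time formula (\ref{E:occutimeformula}).

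For the second assertion I would set $p:=q/(q-1)$, so that $\tfrac1p+\tfrac1q=1$ and $1<p<+\infty$, and observe that the hypothesis $\tfrac nr<\tfrac{\gamma q}{q-1}<n$ is precisely $\tfrac nr<\gamma p<n$. The right inequality gives $0<\gamma<\tfrac np$, while the left one gives $n-\gamma p<n-\tfrac nr=:d$. Hence $\mu_X^I$ is a finite Borel measure which is upper $d$-regular with $n-\gamma p<d\leq n$ and $0<\gamma<\tfrac np$, so Proposition \ref{P:dregular} applies and yields $I^\gamma_q(\mu_X^I)<+\infty$ (in fact $W^{\mu_X^I}_{\gamma,p}$ bounded). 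Since $0<\gamma<n$, Corollary \ref{C:finitemeasure}(ii), applied with $\alpha=-\gamma$, then translates this energy bound into the membership $\mu_X^I\in\dot{L}^q_{-\gamma}(\mathbb{R}^n)$, which is the claim.

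I do not expect a serious obstacle here; the argument is essentially bookkeeping. The only points requiring a little care are recognizing $\tfrac{\gamma q}{q-1}$ as $\gamma p$ and checking that the two-sided constraint on it is exactly what makes both $\gamma<\tfrac np$ and the lower bound $n-\gamma p<d$ hold for the regularity exponent $d=n-\tfrac nr$ produced in the first step, together with keeping track of the endpoints ($r=1$, which the constraints implicitly exclude since they force $r>1$, and $r=+\infty$, where $d=n$) and the interpretation of $\mathcal L^n(B(x,\rho))^{1-1/r}$ in the case $r=+\infty$.
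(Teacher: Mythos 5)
Your proof is correct and follows essentially the same route as the paper, which simply cites \cite[Proposition 4.14]{HTV20} for the upper $(n-\tfrac{n}{r})$-regularity and then invokes Proposition \ref{P:dregular}; your H\"older computation is exactly the content of that cited proposition, and your final translation via Corollary \ref{C:finitemeasure}(ii) is the intended (implicit) last step. The exponent bookkeeping identifying $\tfrac{\gamma q}{q-1}$ with $\gamma p$ and verifying the hypotheses of Proposition \ref{P:dregular} is accurate.
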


A very simple example is as follows.

\begin{example}\label{Ex:Lipschitzcorr}
If $I=[0,T]$ and $X:I\to\mathbb{R}$ is Lipschitz, $|X'|$ is bounded away from zero $\mathcal{L}^1$-a.e. and $X'$ changes sign only finitely many times on $[0,T]$, then $L_X^I\in L^\infty(\mathbb{R})$. By Proposition \ref{P:LTandupperreg}, the measure $\mu_X^I$ is upper $n$-regular.
\end{example}

Typical realizations of certain stochastic processes provide less trivial examples. 

\begin{example}\label{Ex:LT}\mbox{}
\begin{enumerate}
\item[(i)] For large classes of Gaussian stochastic processes it is well-known that typical paths $X$ admit square integrable local times, \cite{Berman69, Berman69a}, \cite[Sections 21 and 22]{GemanHorowitz}. In many cases they are  H\"older continuous in space, \cite{Berman69a, Berman70, Berman73, Pitt78, Xiao97, Xiao06}, \cite[Sections 26 and 27]{GemanHorowitz}. 
There are classes of Gaussian processes with differentiable or smooth local times, see for instance \cite[Section 28]{GemanHorowitz} or \cite{Harang}. An $n$-dimensional fractional Brownian motion $X:I\to \mathbb{R}^n$ on a bounded interval $I$ with Hurst index $0<H<1$ and over some probability space $(\Omega,\mathcal{F},\mathbb{P})$ has local times $L_X^I$ if and only if $H<\frac{1}{n}$, see for instance \cite{Ayache, Baraka, Berman69a, Berman70, Xiao06} and the references cited there or \cite[Section 30]{GemanHorowitz}. In this case $L_X^I$ is $\mathbb{P}$-a.s. $\gamma'$-H\"older continuous on $\mathbb{R}^n$, provided that 
\[0<\gamma'<1\wedge \Big(\frac{1}{2nH}-\frac{1}{2}\Big),\] 
see \cite[(30.4) Theorem]{GemanHorowitz}.
\item[(ii)] There are classical sufficient conditions for the existence of local times for Markov processes $X$ over some probability space $(\Omega,\mathcal{F},\mathbb{P})$ and with values in $\mathbb{R}$, see \cite[Theorem 1]{GetoorKesten} or \cite[(17.1) Theorem]{GemanHorowitz}. For the special case that $X$ is a L\'evy process with symbol $\psi$ the existence of local times $L_X^I$ $\mathbb{P}$-a.s. in $L^2(\mathbb{R}^n)$ follows if 
\[\int_\mathbb{R}\Re((1+\psi(\xi))^{-1})d\xi<+\infty,\] 
see \cite[Chapter V, Theorem 1]{Bertoin96} or \cite{Hawkes}. The $\mathbb{P}$-a.s. (joint) continuity of local times follows for instance if the diffusion coefficient is strictly positive or the L\'evy measure $\nu$ satisfies $\nu(\mathbb{R}\setminus \{0\})=+\infty$, $0$ is regular for $\{0\}$ in the sense of Markov processes, \cite[Chapter I, Definition 11.1]{BlumenthalGetoor68}, and we have 
\[\sum_{j=1}^\infty \delta(2^{-j})^{\frac{1}{2}}<+\infty,\] 
where 
\[\delta(u)=\sup_{|x|\leq u}\pi^{-1}\int_\mathbb{R} (1-\cos x\xi)\Re((1+\psi(\xi))^{-1})d\xi,\] 
see \cite[Theorem 4]{GetoorKesten}. For other classical results on (joint) continuity, see \cite{Barlow88, BassKhoshnevisan92, Berman85} or \cite[Chapter V, Theorem 15]{Bertoin96}. For a symmetric $\alpha$-stable L\'evy process $X$ taking values in $\mathbb{R}$ we have $\psi(\xi)=c|\xi|^\alpha$, $\xi\in\mathbb{R}$. If $1<\alpha\leq 2$, then it has $\mathbb{P}$-a.s. jointly continuous local times, \cite{Boylan, Trotter}. A result on the existence of square integrable local times for Feller processes can be found in \cite[Theorem 1.2 and Section 3.2]{SchillingWang}.
\item[(iii)]  Integrability properties of local times have also been observed for deterministic self-affine functions, see for instance \cite[p. 438, Remarques]{Bertoin}.
\end{enumerate}
\end{example} 

The existence of local times, which restricts the range of possible space dimensions $n$ particularly heavily, is not needed to observe Sobolev regularity properties of occupation measures.

\begin{example}\label{Ex:occumeas}\mbox{}
\begin{enumerate}
\item[(i)] If $0<d< n$, then standard calculations show that for an $n$-dimensional fractional Brownian motion $X=B^H$ over some probability space $(\Omega,\mathcal{F},\mathbb{P})$ we have 
\[\sup_{x\in\mathbb{R}^n}\mathbb{E}\int_0^T|X_t-x|^{-d}dt<+\infty,\] 
provided that its Hurst index $H$ satisfies $H<\frac{1}{d}$. See for instance \cite[Example 4.22]{HTV20}. For such $d$ it follows that the occupation measure of $X$ on $I=[0,T]$ is $\mathbb{P}$-a.s. upper $d$-regular, see for instance \cite[Proposition 4.12]{HTV20}, and by Proposition \ref{P:dregular} therefore $\mu_X^I\in \dot{L}^q_{-\gamma}(\mathbb{R}^n)$ $\mathbb{P}$-a.s. if $0<n-\gamma p<\frac{1}{H}$ and $p$ and $q$ are conjugate. The situation is consistent with the local time case $H<\frac{1}{n}$ in the sense that $\mathbb{P}$-a.s. boundedness of local times implies the $\mathbb{P}$-a.s. upper $n$-regularity of the occupation measure. For $p=q=2$ one can, alternatively, use the fact that 
\[\mathbb{E}[|X(t+\tau)-X(t)|^{-d}]\leq c\:\tau^{-dH}\] 
and follow another standard calculation to obtain
\[\mathbb{E}\left\|\mu_X^I\right\|_{\dot{L}^2_{-\gamma}(\mathbb{R}^n)}^2\leq c\:\mathbb{E}\int_0^T\int_0^T|X(t)-X(\tau)|^{2\gamma-n}\:dt\:d\tau,\]
which is finite if $n-2\gamma<\frac{1}{H}$, see for instance \cite[Chapter 16]{Falconer}. Here we have used Proposition \ref{P:convolution} (with $p=1$).
\item[(ii)] Suppose that $X$ is a L\'evy process with values in $\mathbb{R}^n$ over a probability space $(\Omega,\mathcal{F},\mathbb{P})$. Its \emph{Pruitt index} $\gamma_0$ is the supremum over all $d\geq 0$ such that 
\[\mathbb{E}\int_0^T |X(t)|^{-d}dt<+\infty,\] 
see \cite[formula (1.1) and Theorem 2]{Pruitt} or \cite[Remark 4.3]{Xiao}. The stationarity of increments allows to obtain 
\begin{multline}
\mathbb{E}\int_0^T\int_0^T|X(t)-X(\tau)|^{2\gamma-n}\:dt\:d\tau\notag\\
=\int_0^T\int_0^T\mathbb{E}[|X(|\tau-t|)|^{2\gamma-n}]\:dt\:d\tau\notag\leq 2T \int_0^T\mathbb{E}[|X(t)|^{2\gamma-n}]\:dt
\end{multline}
by a similar calculation as in (i), and we observe this is finite if $n-2\gamma<\gamma_0$. For such $\gamma$ we then have $\mu_X^I\in \dot{L}^2_{-\gamma}(\mathbb{R}^n)$ $\mathbb{P}$-a.s. In the special case that $X$ is an isotropic $\alpha$-stable  L\'evy process with $0<\alpha\leq 2$, we have $\gamma_0=\alpha\wedge n$, and in the general case we have $\beta'\wedge n\leq \gamma_0\leq \beta$, where the indices $\beta'$ and $\beta$ are defined as in \cite[Definition 2.1 and Section 5]{BlumenthalGetoor61}, see \cite[Theorem 5]{Pruitt}. Indices for Feller processes, including a generalization of $\beta$, have been studied in \cite{Schilling98}.
\end{enumerate}
\end{example}

\begin{remark}\label{R:GGprevalence}
In \cite[Theorem 28]{GG20b} it is shown (in the language of $\rho$-irregularity and prevalence) that the property to have quite regular occupation measures or even smooth or analytic local times, is generic for  H\"older continuous, continuous and $p$-integrable paths. A main ingredient for this theorem is a precise observation of how local nondeterminism properties of Gaussian processes imply their irregularity, \cite[Theorem 29]{GG20b}. Large lists of examples in the classes of Gaussian and stable processes, which naturally include the classical examples mentioned above, may be found in \cite[Sections 4.2 and 4.4 respectively 4.3]{GG20b}. Extreme regularization effects are observed in \cite[Theorem 4]{Harang}. The a.s. space-time regularity of local times of Volterra-L\'evy processes is investigated in \cite[Theorem 1]{HarangLing}.
\end{remark}

\section{Regularity of gradient measures}\label{S:gradient}

We introduce further norms and function spaces needed later on.
Let $U\subset \mathbb{R}^n$ be a domain and $\nu$ a Radon measure on $U$. For $m\geq 1$, $1\leq p<+\infty$ and $\theta\in(0,1)$ we write
\begin{equation}\label{E:Gagliardo}
[f]_{\theta,p;\nu} = \left(\int_U \int_U \frac{|f(x)-f(y)|^p}{|x-y|^{n+\theta p}}\:\nu(dx)\nu(dy)\right)^{\frac{1}{p}}
\end{equation}
for the \emph{$(\theta,p)$-Gagliardo seminorm} of a function $f\in L^1_{\loc}(U,\nu;\mathbb{R}^m)$. We denote the fractional Sobolev space of all $f\in L^p(U,\nu;\mathbb{R}^m)$ such that $\Vert f \Vert_{W^{\theta,p}(U,\nu;\mathbb{R}^m)}: = \Vert f \Vert_{L^p(U,\nu;\mathbb{R}^m)} + [f]_{\theta,p}$ is finite by $W^{\theta,p}(U,\nu;\mathbb{R}^m)$. If $\nu=\mathcal{L}^n$, then we drop it from notation and simply write $[f]_{\theta,p}$ and $W^{\theta,p}(U;\mathbb{R}^m)$. We use the notation
\[[f]_{\theta,\infty}:=\sup_{x,y\in U,\ x\neq y}\frac{|f(x)-f(y)|}{|x-y|^{\theta}}\]
for the \emph{$\theta$-H\"older seminorm}. Strictly speaking, $U$ should be part of the notation on the left-hand side, but its choice will always be clear from the context. For the spaces $C^\theta(U;\mathbb{R}^m)$ of bounded $\theta$-H\"older continuous functions on $U$ we write $W^{\theta,\infty}(U;\mathbb{R}^m):=C^\theta(U;\mathbb{R}^m)$ in order to keep statements of results short. We emphasize that this is not an established standard notation. If $m=1$ we omit $\mathbb{R}=\mathbb{R}^m$ from notation.

By $BV_{\loc}(\mathbb{R}^n)$ we denote the space of \emph{functions locally of bounded variation ($BV_{\loc}$-functions)} on $\mathbb{R}^n$, that is, functions $\varphi\in L^1_{\loc}(\mathbb{R}^n)$ whose distributional partial derivatives $D_i\varphi$ are signed Radon measures, $i=1,...,n$, \cite{AFP,Ziemer}. We write $D\varphi=(D_1\varphi,...,D_n\varphi)$ for the $\mathbb{R}^{n}$-valued gradient measure of a function $\varphi\in BV_{\loc}(\mathbb{R}^n)$, and $\left\|D\varphi\right\|$ for the total variation of $D\varphi$. If $\varphi \in L^1(\mathbb{R}^n)$ and $\left\|D\varphi\right\|(\mathbb{R}^n)<+\infty$, then $\varphi$ is said to be a \emph{function of bounded variation ($BV$-function)}, and we denote the space of such functions by $BV(\mathbb{R}^n)$.

\begin{remark}
Recall that in general a $BV$-function does not have to be an element of a Sobolev space $W^{\theta,p}$. Counterexamples are indicators of sets of finite perimeter in higher dimensions, \cite{AFP}. Functions on bounded domains $U$ which are $\theta'$-H\"older for some $\theta'>\theta$ are in $W^{\theta,p}(U)$. In general, H\"older continuous functions do not have to be $BV$. Counterexamples are Weierstrass functions, \cite{Falconer}.
\end{remark}

If the total variation of the gradient measure $\left\|D\varphi\right\|$ of a $BV$-function $\varphi$ has a bounded Riesz potential (\ref{E:Rieszpot}) or a finite energy (\ref{E:energyintegral}), then this is just an expression of its additional H\"older or Sobolev regularity. We state the following Proposition \ref{P:Riesztosmooth}, along with examples, to briefly point out that --- in contrast to the boundedness condition in (i) --- integrability conditions for potentials of gradient measures as in (ii) do not force the function to be continuous, unless one enters the regime of Sobolev's embedding theorem. Proposition \ref{P:Riesztosmooth} will not be used in later sections.

\begin{proposition}\label{P:Riesztosmooth}
Let $\varphi\in BV_{\loc}(\mathbb{R}^n)$, let $\nu$ be an $n$-upper regular Radon measure on $\mathbb{R}^n$ and $s\in (0,1)$. 
\begin{enumerate}
\item[(i)] If $\sup_{z\in \mathbb{R}^n} U^{1-s}\left\|D\varphi\right\|(z)<+\infty$, then $\varphi$ has a Borel version $\widetilde{\varphi}$ which is H\"older continuous of order $s$ on $\mathbb{R}^n$, and 
\begin{equation}\label{E:Hoelderversion}
[\widetilde{\varphi}]_{s,\infty}\leq c\: \sup_{z\in \mathbb{R}^n} U^{1-s}\left\|D\varphi\right\|(z)
\end{equation}
with a constant $c>0$ depending only on $n$ and $s$.
\item[(ii)] If $1<p<+\infty$, $U^{1-s}\left\|D\varphi\right\| \in L^p(\mathbb{R}^n,\nu)$, then for any $0<\beta<s$ we have 
\begin{equation}\label{E:Gagliardobounded}
[\varphi]_{\beta,p;\nu}^p\leq c\:\int_{\mathbb{R}^n} (U^{1-s}\left\|D\varphi\right\|(x))^p \nu(dx)
\end{equation}
with a constant $c>0$ depending only on $n$, $s$, $\beta$, $p$ and $\nu$. If in addition $\varphi\in L^p(\mathbb{R}^n,\nu)$, then 
$\varphi\in W^{\beta,p}(\mathbb{R}^n,\nu)$. If under these assumptions we have $\nu=\mathcal{L}^n$ and $\frac{n}{p}<\beta$, then $\varphi$ has a Borel version $\widetilde{\varphi}$ which is H\"older of order $\beta-\frac{n}{p}$. 
\end{enumerate}
\end{proposition}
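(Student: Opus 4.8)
The plan is to reduce both parts to the single estimate that controls the Gagliardo seminorm (or Hölder seminorm) of $\varphi$ by a potential of its gradient measure, and then to invoke the standard Sobolev/Morrey embedding at the very end. For part (i), the starting point is the familiar pointwise representation: for a $BV_{\loc}$ function $\varphi$ one has, up to the choice of a good Borel representative $\widetilde\varphi$, an estimate of the form $|\widetilde\varphi(x)-\widetilde\varphi(y)| \lesssim \int_{\mathbb{R}^n} |x-z|^{1-n}\,\|D\varphi\|(dz) + \int_{\mathbb{R}^n} |y-z|^{1-n}\,\|D\varphi\|(dz)$ coming from the fundamental solution of the Laplacian (the Riesz kernel $k_1$), localized to balls of radius $\sim |x-y|$. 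The key is that restricting the $z$-integration to $B(x,2|x-y|)$ produces a factor $|x-y|^{s}$ when one splits $k_1 = k_s \cdot |z-\cdot|^{s-1}$ and uses $|x-z| \le 2|x-y|$, so that the remaining integral is exactly $U^{1-s}\|D\varphi\|$ evaluated near $x$ (and near $y$). Taking the supremum of $U^{1-s}\|D\varphi\|$ over $\mathbb{R}^n$ then yields \eqref{E:Hoelderversion} with a dimensional constant, and boundedness of $\widetilde\varphi$ follows from the $n$-upper regularity of $\nu$ together with, say, local integrability of $\varphi$ (or can be absorbed into the Hölder statement on balls and a covering argument).

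For part (ii), I would feed the same pointwise bound into the definition \eqref{E:Gagliardo} of $[\varphi]_{\beta,p;\nu}$. After splitting the difference $|\varphi(x)-\varphi(y)|$ into the two single-variable potentials localized to $B(x, c|x-y|)$, one obtains, modulo harmless constants,
\[
[\varphi]_{\beta,p;\nu}^p \lesssim \int_{\mathbb{R}^n}\int_{\mathbb{R}^n} \frac{\big(M_{s}\|D\varphi\|(x)\big)^p}{|x-y|^{n+\beta p}}\,|x-y|^{sp}\,\nu(dx)\,\nu(dy),
\]
where $M_{s}\|D\varphi\|(x) := \int_{B(x,c|x-y|)} |x-z|^{s-n}\,\|D\varphi\|(dz) \le U^{1-s}\|D\varphi\|(x)$. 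The point is that the inner $y$-integral is $\int_{\mathbb{R}^n} |x-y|^{(s-\beta)p - n}\,\nu(dy)$, which, since $(s-\beta)p > 0$ and $\nu$ is $n$-upper regular, converges (split at $|x-y|=1$; the far part needs $\nu$ finite on bounded sets and decays because $(s-\beta)p-n<0$ eventually — one truncates the localization radius at $1$, replacing $U^{1-s}$ by a truncated potential bounded by the full one). This gives \eqref{E:Gagliardobounded} with a constant depending on $n,s,\beta,p,\nu$. Adding $\|\varphi\|_{L^p(\mathbb{R}^n,\nu)} < \infty$ upgrades this to $\varphi\in W^{\beta,p}(\mathbb{R}^n,\nu)$ by definition. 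The final clause, when $\nu=\mathcal{L}^n$ and $\beta > n/p$, is just the Morrey–Sobolev embedding $W^{\beta,p}(\mathbb{R}^n) \hookrightarrow C^{\beta - n/p}(\mathbb{R}^n)$.

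The main obstacle is the bookkeeping in the localization/truncation step: one must carefully decompose the Riesz potential into a "near" part on $B(x,\min(c|x-y|,1))$, which carries the gain $|x-y|^{s}$ (or its truncated analogue) and is controlled by $U^{1-s}\|D\varphi\|(x)$, and a "far" part which either vanishes after the difference is taken in the correct representation, or is absorbed using the $n$-upper regularity of $\nu$ and finiteness of $\|D\varphi\|$ on bounded sets. Getting the exponents to line up so that the $y$-integral $\int |x-y|^{(s-\beta)p-n}\,\nu(dy)$ is genuinely finite (this is where $\beta<s$ is used, and where $n$-upper regularity of $\nu$ enters for the singular part near $y=x$) is the one place where care is required; everything else is Fubini, the convolution identity $k_1 = k_s * k_{1-s}$ of Proposition \ref{P:convolution}, and the standard embedding theorems.
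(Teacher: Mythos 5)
Your proposal is correct and follows essentially the same route as the paper: your key pointwise bound $|\varphi(x)-\varphi(y)|\leq c|x-y|^s\bigl(U^{1-s}\left\|D\varphi\right\|(x)+U^{1-s}\left\|D\varphi\right\|(y)\bigr)$ at Lebesgue points is exactly what the paper uses, obtained there by citing a fractional maximal function inequality (\cite[Proposition C.1]{HTV20} combined with $\mathcal{M}_{1-s}\leq c\,U^{1-s}$) rather than re-deriving it from the localized Riesz representation. The subsequent steps are identical: take the supremum and extend off $S_\varphi$ for (i); raise to the $p$-th power, integrate against $|x-y|^{-n-\beta p}\nu(dx)\nu(dy)$, and bound the inner integral $\int|x-y|^{(s-\beta)p-n}\nu(dy)$ for (ii); and conclude with the Sobolev embedding theorem.
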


\begin{example}\label{Ex:gradientexamples}\mbox{}
\begin{enumerate}
\item[(i)] Given a point $a$ in space, let $\delta_a$ be the point mass measure assigning measure one to $\{a\}$. For $-\infty<a<b<+\infty$ we clearly have $\varphi=\mathbf{1}_{(a,b)}\in BV(\mathbb{R})$ and $\left\|D\varphi\right\|=\delta_{a}+\delta_{b}$. Moreover, $U^{1-s}\delta_{a}(x)=k_{1-s}(x-a)$, which is in $L^p(\mathbb{R},e^{-|x|}dx)$ if $sp<1$, similarly for $\delta_{a}$. Estimate (\ref{E:Gagliardobounded}) reproduces the well-known fact that $\mathbf{1}_{(a,b)}\in W^{\beta,p}(\mathbb{R},e^{-|x|}dx)$ if $\beta p<1$.
\item[(ii)] The Riesz-kernel $k_\gamma$ itself is in $L^1(\mathbb{R}^n,e^{-|x|}dx)$. It is singular at the origin, but away from the origin it is $C^\infty$. It satisfies $|\partial_{x_i}k_\gamma(x)|\leq c|x|^{\gamma-n-1}$. Consequently $\partial_{x_i}k_\gamma\in L^1(\mathbb{R}^n,e^{-|x|}dx)$ if $1<\gamma<n$, and in particular $k_\gamma\in BV_{\loc}(\mathbb{R}^n)$. It follows that for $n\geq 2$ and $\frac{n}{2}+s<\gamma<n$ we have $U^{1-s}\left\|Dk_\gamma\right\| \in L^2(\mathbb{R}^n,e^{-|x|}dx)$, although $k_\gamma$ is not continuous on $\mathbb{R}^n$.
\end{enumerate}
\end{example}

We recall some notions needed in the proof of Proposition \ref{P:Riesztosmooth}. Let $\varphi\in L^1_{\loc}(\mathbb{R}^n)$. If for $x\in  \mathbb{R}^n$ 
there is some $\lambda_\varphi(x)\in\mathbb{R}$ such that 
\[\lim_{r\to 0}\frac{1}{\mathcal{L}^n(B(x,r))}\int_{B(x,r)}|\varphi(y)-\lambda_{\varphi}(x)| dy=0,\]
then $\lambda_\varphi(x)$ is called the \emph{approximate limit} of $\varphi$ at $x$. We write $S_\varphi$ for the 
set of points $x\in  \mathbb{R}^n$ for which this property does not hold. The set $S_\varphi$ is Borel and of zero Lebesgue measure, \cite[Proposition 3.64]{AFP}, and it does not depend on the choice of a representative for $\varphi$, \cite[p. 160]{AFP}. A point $x\in \mathbb{R}^n\setminus S_\varphi$ with $\varphi(x)=\lambda_\varphi(x)$ is called a \emph{Lebesgue point} of $\varphi\in L^1_{\loc}(\mathbb{R}^n)$. We call a Borel function $\widetilde{\varphi}:\mathbb{R}^n\to\mathbb{R}$ a \emph{Lebesgue representative} of $\varphi\in L^1_{\loc}(\mathbb{R}^n)$ if $\widetilde{\varphi}(x)=\lambda_{\varphi}(x)$ for all $x\in \mathbb{R}^n\setminus S_\varphi$. 

If $\nu$ is a Borel measure on $\mathbb{R}^n$ and $0< \gamma< n$, we write 
\begin{equation}\label{E:fracmaxfct}
\mathcal{M}_{\gamma} \nu(x):=\sup_{r>0}r^{\gamma-n}\:\nu(B(x,r)), \quad x\in \mathbb{R}^n,
\end{equation}
to denote the \emph{fractional Hardy-Littlewood maximal function} of $\nu$ of order $\gamma$. It is immediate that 
\begin{equation}\label{E:trivialbound}
\mathcal{M}_{\gamma}\nu(x)\leq c(\gamma,n)^{-1}\:U^\gamma\nu(x),\quad x\in\mathbb{R}^n.
\end{equation}

\begin{proof}[Proof of Proposition \ref{P:Riesztosmooth}] Throughout the proof let $\varphi$ denote a Lebesgue representative of the $\mathcal{L}^n$-equivalence class denoted by the same symbol. From \cite[Proposition C.1]{HTV20} (which itself is based on \cite[Lemma 4.1]{AK} and \cite[Remark 3.45]{AFP}) we know that for any Lebesgue points $x,y\in\mathbb{R}^n$ of $\varphi$ we have 
\begin{equation}\label{E:backendMVT}
|\varphi(x)-\varphi(y)|\leq c|x-y|^s\left(\mathcal{M}_{1-s}\left\|D\varphi\right\|(x)+\mathcal{M}_{1-s}\left\|D\varphi\right\|(y)\right)
\end{equation}
with $c>0$ depending only on $n$ and $s$. Combining this with the hypothesis in statement (i) and with (\ref{E:trivialbound}), we see that $\varphi$ is H\"older continuous of order $s$ on $\mathbb{R}^n\setminus S_\varphi$. Hence it extends to a H\"older continuous function on $\mathbb{R}^n$, and estimate (\ref{E:Hoelderversion}) also follows along these arguments. To see an elementary proof of (ii), note that combining (\ref{E:backendMVT}) and (\ref{E:trivialbound}), taking the $p$-th power, integrating with respect to $|x-y|^{-(n+\beta p)}\nu(dx)\nu(dy)$ and using the symmetry of the integrand, we obtain
\begin{multline}
\int_{\mathbb{R}^n} \int_{\mathbb{R}^n} \frac{|\varphi(x)-\varphi(y)|^p}{|x-y|^{n+\beta p}}\nu(dx)\nu(dy)\notag\\
\leq c\int_{\mathbb{R}^n} \left(\int_{\mathbb{R}^n} |x-y|^{-n+(s-\beta)p}\nu(dy)\right)\left(U^{1-s}\left\|D\varphi\right\|(x)\right)^p\nu(dx)
\end{multline}
which shows (\ref{E:Gagliardobounded}). The second statement in (ii) is clear from the Sobolev embedding theorem, see for instance \cite{DiNezza}.
\end{proof}

\begin{remark}
Measures $\nu$ different from $\mathcal{L}^n$ in (\ref{E:Gagliardo}) and Proposition \ref{P:Riesztosmooth} were used only to avoid integrability issues at infinite in the examples. Except for a single example (Example \ref{Ex:LTtovar} below) we will always choose $\nu=\mathcal{L}^n$ in what follows.
\end{remark}

\section{Variability and compositions}\label{S:var}

In this section we formulate our main statement, Theorem \ref{thm:sobolev-membership}, on the existence and regularity of compositions of fractional Sobolev paths $X$ with $BV$-functions $\varphi$. It involves regularity of the path in terms of fractional Sobolev spaces and a relative diffusivity condition for the occupation measure of the path $X$ and the gradient measure of $\varphi$, Definition \ref{D:SVp-condition}. In this section we keep the interval $I=[0,T]$ fixed.

Recall that we refer to any Borel function $X=(X^1,\ldots,X^n):[0,T]\to \mathbb{R}^n$ as a \emph{path}. The following notion had been introduced in \cite[Definition 2.1]{HTV20}. 
\begin{definition}\label{D:SVp-condition}
Let $\varphi \in BV(\mathbb{R}^n)$, $p\in [1,+\infty]$ and $s\in (0,1)$. We say that a path $X:[0,T]\to \mathbb{R}^n$ \emph{is $(s,p)$-variable with respect to $\varphi$} if 
\begin{equation}
\label{eq:SVp-condition}
\int_{\overline{X([0,T])}}k_{1-s}(X_{\cdot}-z)\left\|D\varphi\right\|(dz) \in L^p(0,T).
\end{equation} 
We write $V(\varphi,s,p)$ for the class of paths $X$ that are $(s,p)$-variable w.r.t. $\varphi$ and use the short notation $V(\varphi,s):=V(\varphi,s,1)$.
\end{definition}

\begin{remark}\mbox{}
\begin{enumerate}
\item[(i)] If $X([0,T])$ is bounded, then one can define $(s,p)$-variability with respect to $\varphi \in BV_{\loc}(\mathbb{R}^n)$ via \eqref{eq:SVp-condition}.
\item[(ii)] Obviously $V(\varphi,s,p')\subset V(\varphi,s,p)$ for $p'>p$ and $V(\varphi,s',p)\subset V(\varphi,s,p)$ for $s'>s$. 
\item[(iii)] It is immediate from (\ref{E:occutimeformula}) that $X$ is $(s,p)$-variable  w.r.t. $\varphi$ if and only if the $(1-s,p)$-energy (\ref{E:genmutual}) of $\left\|D\varphi\right\|$ w.r.t. $\mu_X^I$ is finite. In the case that local times exist this happens if and only if the Riesz-potential of order $1-s$ of $\left\|D\varphi\right\|$ is in the weighted $L^p$-space on $\mathbb{R}^n$ with weight $L_X^I$. 
\end{enumerate}
\end{remark}

\begin{example}\label{Ex:indicator}
If $n=1$ and $\varphi=\mathbf{1}_{(a,b)}$ is as in Examples \ref{Ex:gradientexamples} (i), then $X$ is $(s,1)$-variable w.r.t. $\varphi$ if $\int_\mathbb{R} (k_{1-s}(y-a)+k_{1-s}(y-b))\mu_X^I(dy)<+\infty$, that is, if $U^{1-s}\mu_X^I$ is finite at $a$ and $b$. This would imply that the upper $s$-density of $\mu_X^I$ at $a$ is zero, $\limsup_{\varepsilon\to 0} \varepsilon^{-s}\mu_X^I((a-\varepsilon,a+\varepsilon))=0$, and similarly at $b$. See \cite[Definition 2.55]{AFP} or \cite[Definition 6.8]{Mattila} for the notion of upper density. Conversely, if for some $s'>s$ the upper $s'$-density 
of $\mu_X^I$ at $a$ is finite, $\limsup_{\varepsilon\to 0} \varepsilon^{-s'}\mu_X^I((a-\varepsilon,a+\varepsilon))<+\infty$, and similarly at $b$, then $U_{1-s}\mu_X^I$ is finite at $a$ and $b$. The path $X$ may hit $a$ and $b$, but it cannot spend any positive time at these points, and it has to approach and leave them quickly enough.
\end{example}

\begin{example}\label{Ex:LTtovar}
Suppose that $w$ is a strictly positive Borel function on $\mathbb{R}^n$ and that $X$ has local times $L^I_X\in L^r(\mathbb{R}^n, w^r(x)dx)$ for some $1< r< +\infty$. Then $X$ is $(s,p)$-variable w.r.t. any $\varphi\in BV(\mathbb{R}^n)$ such that $U^{1-s}\left\|D\varphi\right\|\in L^{pq}(\mathbb{R}^n, w^{-q}(x)dx)$ with $\frac{1}{q}+\frac{1}{r}=1$, as follows from H\"older's inequality. A similar statement also works for $r=+\infty$ or $r=1$. If $\frac{n}{r}<1-s$, then $U^{1-s}\mu_X^I$ is uniformly bounded, as follows from Proposition \ref{P:LTandupperreg} and \cite[Proposition 4.13]{HTV20}. In this case, the path $X$ is $(s,1)$-variable with respect to any $\varphi\in BV(\mathbb{R}^n)$, as already observed in \cite[Proposition 4.12 and Corollary 4.13]{HTV20}. 
\end{example}

\begin{example}
Suppose that $1<q<+\infty$, $\frac{1}{q}+\frac{1}{r}=1$ and $\gamma_1,\gamma_2>0$ with $\gamma_1+\gamma_2=1-s$. If $\varphi\in BV(\mathbb{R}^n)$ is such that $\left\|D\varphi\right\|\in \dot{L}^q_{-\gamma_1}(\mathbb{R}^n)$ and $X$ is such that $\mu_X^I\in \dot{L}_{-\gamma_2}^r(\mathbb{R}^n)$, then by Proposition \ref{P:convolution} and H\"older's inequality we have 
\[\int_{\mathbb{R}^n} U^{1-s}\left\|D\varphi\right\|\:d\mu_X^I=\int_{\mathbb{R}^n}U^{\gamma_1}\left\|D\varphi\right\| U^{\gamma_2}\mu_X^I\:dx\leq \left\|\left\|D\varphi\right\|\right\|_{\dot{L}^q_{-\gamma_1}(\mathbb{R}^n)} \left\|\mu_X^I\right\|_{\dot{L}_{-\gamma_2}^r(\mathbb{R}^n)},\]
which shows that $X$ is $(s,1)$-variable with respect to $\varphi$. By Proposition \ref{P:convolution} the $(s,2)$-variability of $X$ w.r.t. $\varphi$ requires 
\[\int_{\mathbb{R}^n}\int_{\mathbb{R}^n}\int_{\mathbb{R}^n}k_{\gamma_1}(y-z_1)k_{\gamma_1}(y-z_2)\mu_X^I(dy)\:U^{\gamma_1}\left\|D\varphi\right\|(z_1)U^{\gamma_1}\left\|D\varphi\right\|(z_1)dz_1dz_2<+\infty.\]
If $(z_1,z_2)\mapsto \int_{\mathbb{R}^n}k_{\gamma_1}(y-z_1)k_{\gamma_1}(y-z_2)\mu_X^I(dy)$ is in $L^r(\mathbb{R}^{2n})$ and $\left\|D\varphi\right\|\in \dot{L}^q_{-\gamma_1}(\mathbb{R}^n)$, then this is implied by H\"older's inequality. In contrast to the local time case, the simple application of H\"older's inequality in the present example seem too coarse to be useful.
\end{example}

In cases where $X$ is (a typical realization of) a stochastic process with suitable probability densities, these densities can be used to obtain convenient sufficient conditions for variability in the a.s. sense.

\begin{example}\label{Ex:fBMtovar}
Suppose that $X=B^H$ is the $n$-dimensional fractional Brownian motion $B^H$ with Hurst index $0<H<1$. As observed in Examples \ref{Ex:occumeas} (i) its occupation measure is upper $d$-regular $\mathbb{P}$-a.s. if $0<d\leq n$ is such that $H<\frac{1}{d}$. 
Similarly as in Examples \ref{Ex:LTtovar} this implies that $B^H$ is $(s,1)$-variable w.r.t. any $\varphi\in BV(\mathbb{R}^n)$ if $n-1+s<\frac{1}{H}$, see \cite[Example 4.22]{HTV20} for details. In \cite[Example 4.25]{HTV20} we had used that if $n\geq 2$ and $H>\frac{1}{n}$, then the Gaussian density satisfies 
\[\int_0^T t^{-nH}\exp\left(-\frac{|y|^2}{2t^{2H}}\right)dt\leq c\:|y|^{\frac{1}{H}-n}\]
with $c>0$ depending only on $n$ and $H$, so that for any $s\in (0,1)$ such that $\frac{1}{H}<n-1+s$ we obtain 
\begin{align}\label{E:oldboundfBm}
\mathbb{E}\int_0^T U^{1-s}\left\|D\varphi\right\|(X_t)dt
&\leq c\int_{\mathbb{R}^n}\int_{\mathbb{R}^n} |x-y|^{-n+1-s}|y|^{\frac{1}{H}-n}dy\left\|D\varphi\right\|(dx)\notag\\
&\leq c\int_{\mathbb{R}^n}|x|^{-n+1-s+\frac{1}{H}}\left\|D\varphi\right\|(dx),
\end{align}
where $c>0$ is a constant depending only on $n$, $H$ and $s$. If $\varphi\in BV(\mathbb{R}^n)$ makes the last line in (\ref{E:oldboundfBm}) finite, then $\mathbb{P}$-a.s. $X$ is $(s,1)$-variable w.r.t. $\varphi$. This is a (negative) moment condition for the gradient measure, and similarly as in Example \ref{Ex:indicator} it can be rephrased in terms of upper density of $\left\|D\varphi\right\|$ at the origin. It is a local condition and generally easier to ensure than the global integrability or boundedness of a potential of $\left\|D\varphi\right\|$. For instance, if $A\subset\mathbb{R}^n$ is a set of finite perimeter, \cite[Definition 3.35]{AFP}, and $\mathcal{L}^n(A)<+\infty$, then $\varphi=\mathbf{1}_A$ is in $BV(\mathbb{R}^n)$ and $S_\varphi$ is its essential boundary, \cite[Example 3.68]{AFP}. If $S_\varphi$ has positive distance from the origin, then (\ref{E:oldboundfBm}) is finite for this $\varphi$. To discuss $(s,2)$-variability one can use a variant of (\ref{E:oldboundfBm}): Proceeding similarly as there and using Proposition \ref{P:convolution}, we see that
\begin{multline}\label{E:newboundfBM}
\mathbb{E}\int_0^T (U^{1-s}\left\|D\varphi\right\|(X_t))^2dt\leq c\int_{\mathbb{R}^n} (U^{1-s}\left\|D\varphi\right\|(y))^2|y|^{\frac{1}{H}-n}dy\\
= c\int_{\mathbb{R}^n}\int_{\mathbb{R}^n}\left(\int_{\mathbb{R}^n}k_{1-s}(z_1-y)k_{1-s}(z_2-y)|y|^{\frac{1}{H}-n}dy\right)\left\|D\varphi\right\|(dz_1)\left\|D\varphi\right\|(dz_2).
\end{multline}
If $\varphi\in BV(\mathbb{R}^n)$ makes (\ref{E:newboundfBM}) finite, then $\mathbb{P}$-a.s. $X$ is $(s,2)$-variable w.r.t. $\varphi$. The finiteness of the right hand side of (\ref{E:newboundfBM}) is both a moment and a smoothness condition for $\left\|D\varphi\right\|$.
Since $w(x)=|x|^{1/H-n}$ is a weight of class $A_2$, (\ref{E:Wolffweighted}) shows that the right hand side of (\ref{E:newboundfBM}) is finite if and only if $W_{1-s,2,w}^{\left\|D\varphi\right\|}$ is in $L^1(\mathbb{R}^n,\left\|D\varphi\right\|)$, where 
\[W_{1-s,2,w}^{\left\|D\varphi\right\|}(x)=\int_0^1 \frac{r^{2(1-s)}\left\|D\varphi\right\|(B(x,r))}{w(B(x,r))}\:\frac{dr}{r},\quad x\in\mathbb{R}^n.\]
Here we have $w(B(x,r))=\int_{B(0,r)} |z+x|^{1/H-n}dz$, so that on the one hand $w(B(0,r))=c\:r^{1/H}$, and on the other hand $w(B(x,r))$ is comparable to $|x|^{1/H-n}r^n$ on $\{|x|\geq 2\}$. If, for instance, $\varphi$ is constant on $B(0,3)$, then $W_{1-s,2,w}^{\left\|D\varphi\right\|}(x)=0$ for $|x|<2$, and on $\{|x|\geq 2\}$ the quantity  $W_{1-s,2,w}^{\left\|D\varphi\right\|}(x)$ is dominated by a multiple of $|x|^{1/H-n} W^{\left\|D\varphi\right\|}_{1-s,2}(x)$. This could be in $L^1(\mathbb{R}^n,\left\|D\varphi\right\|)$ even if $\varphi$ has singularities (of a similar type as in Example \ref{Ex:gradientexamples} (ii)) on $\{|x|\geq 2\}$.
\end{example}

\begin{example}\label{Ex:Levytovar}
For L\'evy processes (and even for some more general Markov processes) with values in $\mathbb{R}^n$ calculations similar to those in Example \ref{Ex:fBMtovar} can be based on known scaling properties or upper heat kernel estimates. To give a concrete example, suppose that $X$ is the isotropic $\alpha$-stable L\'evy process, where $0<\alpha<2$. In view of Examples \ref{Ex:LT} (ii) and \ref{Ex:LTtovar} we may assume $n\geq 2$. Writing $p(t,y)$ for the density of $X_t$, we have
\[\int_0^T p(t,y)dt\leq c\:|y|^{\alpha-n}\] 
with a constant $c>0$ depending only on $n$ and $\alpha$, and for $s\in (0,1)$ such that $\alpha<n-1+s$ also 
\[\mathbb{E}\int_0^T U^{1-s}\left\|D\varphi\right\|(X_t)dt\leq c\int_{\mathbb{R}^n}|x|^{-n+1-s+\alpha}\left\|D\varphi\right\|(dx)\]
with $c>0$ depending only on $n$, $\alpha$ and $s$. If $\varphi$ makes the right hand side finite, then $X$ is $\mathbb{P}$-a.s. $(s,1)$-variable w.r.t. $\varphi$. In a similar manner as before we see that $X\in V(s,p,\varphi)$ is $\mathbb{P}$-a.s. if $\varphi$ satisfies
\begin{equation}\label{E:newboundLevy}
\int_{\mathbb{R}^n} (U^{1-s}\left\|D\varphi\right\|(y))^p|y|^{\alpha-n}dy<+\infty.
\end{equation}
\end{example}

A first consequence of variability is the following, cf. \cite[Lemma 2.4]{HTV20}. It is immediate from \cite[Corollary 4.4 and its proof]{HTV20}. Recall that we denote the approximate discontinuity set of $\varphi$ by $S_\varphi$.
\begin{proposition}\label{P:welldef}
Let $\varphi\in BV(\mathbb{R}^n)$, $p\in [1,+\infty]$, $s\in (0,1)$ and suppose that $X\in V(\varphi,s,p)$. Then the set $\{t\in [0,T]: X_t\in S_\varphi\}$ is of zero $\mathcal{L}^1$-measure, and for any two Lebesgue representatives $\widetilde{\varphi}_1$ and $\widetilde{\varphi}_2$ of $\varphi$ we have $\widetilde{\varphi}_1(X_t)=\widetilde{\varphi}_2(X_t)$ for $\mathcal{L}^1$-a.e. $t\in (0,T)$.  
\end{proposition}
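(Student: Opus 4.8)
The plan is to reduce the proposition to the single assertion $\mu_X^I(S_\varphi)=0$, where $I=[0,T]$. This suffices: directly from the definition \eqref{E:occumeasure} of the occupation measure the set $\{t\in[0,T]:X_t\in S_\varphi\}$ has $\mathcal{L}^1$-measure $\mu_X^I(S_\varphi)$, and, since any two Lebesgue representatives of $\varphi$ coincide with the approximate limit $\lambda_\varphi$ on $\mathbb{R}^n\setminus S_\varphi$, one has $\{\widetilde\varphi_1\neq\widetilde\varphi_2\}\subseteq S_\varphi$, so $\widetilde\varphi_1(X_t)=\widetilde\varphi_2(X_t)$ for $\mathcal{L}^1$-a.e.\ $t$ once $\mu_X^I(S_\varphi)=0$. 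As this is exactly \cite[Corollary 4.4]{HTV20}, I would simply quote it; the argument behind it runs as follows.

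First I would bring in the fine structure of $BV$-functions, \cite{AFP}: by Federer's theorem $S_\varphi$ differs from the jump set $J_\varphi$ by an $\mathcal{H}^{n-1}$-null set, $J_\varphi$ is countably $(n-1)$-rectifiable, and the jump part of $D\varphi$ is $(\varphi^+-\varphi^-)\nu_\varphi\,\mathcal{H}^{n-1}\llcorner J_\varphi$ with $|\varphi^+-\varphi^-|>0$ $\mathcal{H}^{n-1}$-a.e.\ on $J_\varphi$. Rectifiability gives, at $\mathcal{H}^{n-1}$-a.e.\ $x\in J_\varphi$, a lower density bound $\|D\varphi\|(B(x,r))\geq c_x\,r^{n-1}$ for all small $r>0$, with $c_x>0$. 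Rewriting the Riesz potential \eqref{E:Rieszpot} of order $1-s$ via Fubini and inserting this bound,
\[
U^{1-s}\|D\varphi\|(x)=c\int_0^\infty r^{(1-s)-n}\,\|D\varphi\|(B(x,r))\,\frac{dr}{r}\ \geq\ c\,c_x\int_0^{r_0} r^{-s}\,\frac{dr}{r}=+\infty,
\]
so $U^{1-s}\|D\varphi\|=+\infty$ at $\mathcal{H}^{n-1}$-a.e.\ point of $S_\varphi$.

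On the other hand, $X\in V(\varphi,s,p)$ means exactly that $t\mapsto U^{1-s}\mu(X_t)$ lies in $L^p(0,T)\subseteq L^1(0,T)$, where $\mu:=\|D\varphi\|\llcorner\overline{X([0,T])}$; by \eqref{E:occutimeformula} the $(1-s,1)$-energy \eqref{E:genmutual} of $\mu$ with respect to $\mu_X^I$ is then finite, so $U^{1-s}\mu<+\infty$ $\mu_X^I$-a.e. Assume towards a contradiction that $\mu_X^I(S_\varphi)>0$. Since $\mu_X^I$ is concentrated on $\overline{X([0,T])}$, the set $E:=S_\varphi\cap\overline{X([0,T])}$ has positive $\mu_X^I$-measure, and the crucial point is that at $\mu_X^I$-a.e.\ point of $E$ the lower density bound above persists after restricting $\|D\varphi\|$ to $\overline{X([0,T])}$; granting this, the computation of the previous paragraph yields $U^{1-s}\mu=+\infty$ on a set of positive $\mu_X^I$-measure, contradicting the finiteness just obtained. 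Hence $\mu_X^I(S_\varphi)=0$.

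I expect that crucial point --- reconciling the cut-off to $\overline{X([0,T])}$ in Definition \ref{D:SVp-condition} with the pointwise divergence of $U^{1-s}\|D\varphi\|$ on $S_\varphi$ --- to be the main obstacle: $\mu_X^I$ need not be absolutely continuous with respect to $\mathcal{L}^n$, so one cannot discard $S_\varphi\setminus J_\varphi$ out of hand, and $\|D\varphi\|$ may carry substantial mass away from $\overline{X([0,T])}$, so one must estimate with the restricted gradient measure $\mu$ itself rather than with $\|D\varphi\|$. The clean way to deal with it is to transcribe the corresponding estimate from the proof of \cite[Corollary 4.4]{HTV20}; everything else above is routine measure theory together with the occupation time formula.
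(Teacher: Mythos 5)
Your reduction of the proposition to the single assertion $\mu_X^I(S_\varphi)=0$ is correct, and since the paper's entire proof consists of the remark that the statement ``is immediate from \cite[Corollary 4.4 and its proof]{HTV20}'', quoting that corollary is precisely what the authors themselves do; at that level your proposal and the paper coincide. The difficulty lies in the argument you sketch behind the citation, and you half-diagnose it yourself. The route through Federer's theorem and the rectifiability of the jump set yields the lower bound $\|D\varphi\|(B(x,r))\geq c_x r^{n-1}$, and hence $U^{1-s}\|D\varphi\|(x)=+\infty$, only at $\mathcal{H}^{n-1}$-a.e.\ point of $S_\varphi$. Against an arbitrary finite Borel measure $\mu_X^I$ an $\mathcal{H}^{n-1}$-null exceptional set is fatal --- it may carry all of the mass of $\mu_X^I$ --- so this is not a wrinkle to be smoothed over on $S_\varphi\setminus J_\varphi$: the statement you derive simply cannot be integrated against $\mu_X^I$. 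The correct mechanism is more elementary and pointwise. Writing $\varphi_{x,r}:=\fint_{B(x,r)}\varphi\,dy$, the Poincar\'e inequality for $BV$-functions gives $\fint_{B(x,r)}|\varphi-\varphi_{x,r}|\,dy\leq c\,r^{1-n}\|D\varphi\|(B(x,r))$; if $U^{1-s}\|D\varphi\|(x)<+\infty$, then your own Fubini identity yields $\sum_{k\geq 0}2^{k(n-1+s)}\|D\varphi\|(B(x,2^{-k}))<+\infty$, the averages $\varphi_{x,2^{-k}}$ form a Cauchy sequence, and $x$ admits an approximate limit. Contrapositively, $U^{1-s}\|D\varphi\|=+\infty$ at \emph{every} point of $S_\varphi$, with no exceptional set; this is the version of the blow-up that can be tested against $\mu_X^I$, and it makes the detour through rectifiability both unnecessary and misleading.

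Your second concern --- that Definition \ref{D:SVp-condition} controls only the potential of the restricted measure $\|D\varphi\|\llcorner\overline{X([0,T])}$, whose mass near a point of $S_\varphi\cap\overline{X([0,T])}$ may be far smaller than that of $\|D\varphi\|$ --- is the genuinely delicate step, and it is exactly where the content of \cite[Corollary 4.4]{HTV20} lies. Announcing that you would ``transcribe the corresponding estimate'' from that proof is a citation, not an argument, so as a self-contained proof the proposal is incomplete at the one place that requires an idea. The remaining ingredients (the identity $\mathcal{L}^1(\{t:X_t\in S_\varphi\})=\mu_X^I(S_\varphi)$ from \eqref{E:occumeasure}, the inclusion $\{\widetilde{\varphi}_1\neq\widetilde{\varphi}_2\}\subseteq S_\varphi$ for Lebesgue representatives, and the passage via \eqref{E:occutimeformula} from variability to the $\mu_X^I$-a.e.\ finiteness of $U^{1-s}\bigl[\|D\varphi\|\llcorner\overline{X([0,T])}\bigr]$) are all correct and routine.
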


Proposition \ref{P:welldef} ensures that the following definition is correct.

\begin{definition}\label{D:composition}
Let the hypotheses of Proposition \ref{P:welldef} be in force. We define the \emph{composition $\varphi\circ X$ of $\varphi$ and $X$} to be the $\mathcal{L}^1$-equivalence class on $(0,T)$ of 
\[t\mapsto\widetilde{\varphi}(X_t),\] 
where $\widetilde{\varphi}$ is an arbitrary Lebesgue representative of $\varphi$. 
\end{definition}

\begin{remark}\label{R:comparetoaverage}
The $(s,p)$-variability of a path $X$ w.r.t. $\varphi\in BV(\mathbb{R}^n)$ does generally not imply any variability of $X+x$ w.r.t. $\varphi$ if $x\in \mathbb{R}^n\setminus \{0\}$. If for all $x\in\mathbb{R}^n$ the path $X+x$ is $(s,1)$-variable w.r.t. $\varphi$ and each $\varphi\circ (X+x)$ is in $L^1(0,T)$, then with an adequate interpretation of the integral one can view 
\begin{equation}\label{E:comparetoaverage}
t\mapsto \int_0^t\varphi(X_s+\cdot)\:ds
\end{equation}
as an absolutely continuous $BV(\mathbb{R}^n)$-valued function on $[0,T]$; this involves a standard redefinition procedure. If, on the other hand, (\ref{E:comparetoaverage}) is an absolutely continuous $BV(\mathbb{R}^n)$-valued function, then its derivative in the $\mathcal{L}^1$-a.e. sense is $t\mapsto \varphi\circ (X_t+\cdot)$. Although it does not fit it rigorously, this perspective is very much related to \cite[Lemma 20 iv.]{GG20b}.
\end{remark}

If the path $X$ itself defines an element of some Sobolev space and it is also variable w.r.t. $\varphi$, then we can exploit both these facts together to 
see that also $\varphi\circ X$ is an element of a certain Sobolev space. This is formulated in the next theorem, which is our main result. It generalizes \cite[Theorem 2.13 (i)]{HTV20} in the sense that $X$ does not have to be H\"older continuous. 
In the following we consider Sobolev and H\"older spaces over $U=(0,T)\subset \mathbb{R}$ and use (\ref{E:Gagliardo}) with measure $\nu=\mathcal{L}^1|_{(0,T)}$. Recall that we write $W^{\theta,\infty}(0,T;\mathbb{R}^n):=C^\theta(0,T;\mathbb{R}^n)$. 

\begin{theorem}
\label{thm:sobolev-membership}
Let $\varphi\in BV(\mathbb{R}^n)$, $X$ is a path, $s,\theta \in (0,1)$, $1\leq p<+\infty$ and $1\leq q \leq+\infty$. Suppose that 
\[X \in W^{\theta,q}(0,T;\mathbb{R}^n)\cap V(\varphi,s,p).\] 
\begin{enumerate}
\item[(i)] If $r\geq 1$ is such that $\frac{1}{p}+\frac{s}{q}\leq \frac{1}{r}$ and $\beta<s\theta$, then $\varphi \circ X \in W^{\beta,r}(0,T)$
and in particular, 
\[[\varphi\circ X]_{\beta,r}\leq c\: [X]_{\theta,q}^s\: \Vert U^{1-s}\left\|D\varphi\right\|(X_\cdot) \Vert_{L^p(0,T)}\]
with a constant $c>0$ depending only on $n$, $s$, $p$, $q$, $r$, $\theta$, $\beta$ and $T$.
\item[(ii)] If $\frac{1}{p}+\frac{s}{q}< s\theta$, then for any $\theta'\leq \theta-\frac{1}{q}$ we have $X \in W^{\theta',\infty}(0,T;\mathbb{R}^n)$,
and for any $\beta < s\theta - \frac{1}{p}-\frac{s}{q}$ we have $\varphi \circ X \in W^{\beta,\infty}(0,T)$.
\end{enumerate}
\end{theorem}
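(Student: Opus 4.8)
The plan is to reduce the regularity of $\varphi\circ X$ to a pointwise difference inequality for $\varphi$ along the path, then combine it with the Gagliardo norm of $X$ and the variability integral by H\"older's inequality. First I would invoke the pointwise estimate (\ref{E:backendMVT}) of Proposition \ref{P:Riesztosmooth} (itself from \cite[Proposition C.1]{HTV20}): for Lebesgue points $x,y$ of $\varphi$,
\[|\widetilde{\varphi}(x)-\widetilde{\varphi}(y)|\leq c|x-y|^s\big(U^{1-s}\|D\varphi\|(x)+U^{1-s}\|D\varphi\|(y)\big),\]
using (\ref{E:trivialbound}) to pass from the fractional maximal function to the Riesz potential. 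By Proposition \ref{P:welldef}, $X_t\notin S_\varphi$ for a.e.\ $t$, so applying this with $x=X_t$, $y=X_\tau$ gives, for a.e.\ $t,\tau\in(0,T)$,
\[|\varphi(X_t)-\varphi(X_\tau)|\leq c\,|X_t-X_\tau|^s\big(g(t)+g(\tau)\big),\qquad g(t):=U^{1-s}\|D\varphi\|(X_t),\]
where $g\in L^p(0,T)$ is exactly the variability hypothesis.

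For part (i), I would plug this into the Gagliardo seminorm:
\[[\varphi\circ X]_{\beta,r}^r\leq c\int_0^T\int_0^T\frac{|X_t-X_\tau|^{sr}\,(g(t)+g(\tau))^r}{|t-\tau|^{1+\beta r}}\,dt\,d\tau.\]
By symmetry it suffices to bound the term with $g(t)^r$. Here is the key trick: write $\beta=s\theta-\varepsilon$ for small $\varepsilon>0$, and split the kernel exponent as $1+\beta r = (1+\theta\cdot sr\cdot\tfrac{1}{\text{something}})+\ldots$ — more precisely, I would use $|X_t-X_\tau|^{sr}=\big(|X_t-X_\tau|/|t-\tau|^{\theta}\big)^{sr}\,|t-\tau|^{\theta sr}$, so that the double integral becomes
\[\int_0^T\!\!\int_0^T \Big(\tfrac{|X_t-X_\tau|}{|t-\tau|^\theta}\Big)^{sr} g(t)^r\, |t-\tau|^{\theta s r - 1-\beta r}\,dt\,d\tau,\]
and since $\theta s r-\beta r=\varepsilon r>0$ the remaining power of $|t-\tau|$ is integrable in $\tau$ over the bounded interval. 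Then I would apply H\"older in the variables $(t,\tau)$ with three exponents matched to $\tfrac1p+\tfrac sq\le\tfrac1r$: the factor $\big(|X_t-X_\tau|/|t-\tau|^\theta\big)^{sr}$ is handled by the exponent $q/(sr)$ (giving $[X]_{\theta,q}^{s}$ raised to the right power after a Fubini/Minkowski step on the inner $\tau$-integral), $g(t)^r$ by the exponent $p/r$ (giving $\|g\|_{L^p}$), and the integrable power of $|t-\tau|$ by the leftover exponent, which converges because the interval is bounded — this is where the constant's dependence on $T$ enters. The $L^r$-membership of $\varphi\circ X$ itself follows since $\varphi\in L^\infty$ would be false in general, but boundedness on the bounded interval $(0,T)$ follows from $\varphi\circ X\in L^1$ (Proposition \ref{P:welldef}) together with the finite Gagliardo seminorm via the standard fact that $W^{\beta,r}(0,T)$ is the completion; more carefully I would note $\varphi(X_t)-\fint \varphi(X_\tau)d\tau$ is controlled by a single integral of the difference inequality, hence in $L^r$.

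For part (ii) the idea is the same but one exponent is pushed to $\infty$: from $\tfrac1p+\tfrac sq< s\theta$ one gets room to use a Morrey/Campanato-type argument. Concretely, the hypothesis $\tfrac sq<s\theta-\tfrac1p<s$ combined with $X\in W^{\theta,q}$ and the Sobolev embedding $W^{\theta,q}(0,T)\hookrightarrow C^{\theta-1/q}(0,T)$ (valid since we are in one dimension; this gives the claimed $X\in W^{\theta',\infty}$ for $\theta'\le\theta-\tfrac1q$) lets me replace $|X_t-X_\tau|^s$ by $[X]_{\theta-1/q,\infty}^s|t-\tau|^{s(\theta-1/q)}$ in the difference inequality. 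Then
\[|\varphi(X_t)-\varphi(X_\tau)|\leq c\,|t-\tau|^{s(\theta-1/q)}\big(g(t)+g(\tau)\big),\]
and integrating $g$ against a suitable kernel, or rather using H\"older with $g\in L^p$ on an interval of length $|t-\tau|$, produces a factor $|t-\tau|^{1/p'}=|t-\tau|^{1-1/p}$... — instead the cleanest route is: for the H\"older seminorm of order $\beta$ we need $\sup_{t\ne\tau}|t-\tau|^{-\beta}|\varphi(X_t)-\varphi(X_\tau)|<\infty$; I would estimate $|\varphi(X_t)-\varphi(X_\tau)|$ by integrating the difference inequality (in the form with the maximal function and the $L^p$ bound on $g$) over a ball of radius $|t-\tau|$ and dividing, obtaining a bound $c|t-\tau|^{s\theta-s/q-1/p}\|g\|_{L^p}$, which gives exactly $\beta<s\theta-\tfrac1p-\tfrac sq$ as claimed. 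The \textbf{main obstacle} I anticipate is organizing the three-way H\"older split in part (i) so that the inner $\tau$-integral producing the $[X]_{\theta,q}$ factor is handled correctly — one needs a Minkowski integral inequality step to move the $L^{q/(sr)}$-norm inside, and one must track that $\beta<s\theta$ (strict) is exactly what provides the positive exponent $\varepsilon r$ making the $|t-\tau|$-power integrable, with no borderline case surviving. The rest is bookkeeping of conjugate exponents under the constraint $\tfrac1p+\tfrac sq\le\tfrac1r$.
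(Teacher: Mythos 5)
Your overall skeleton --- the pointwise $BV$ difference estimate $|\varphi(X_t)-\varphi(X_\tau)|\le c|X_t-X_\tau|^s(g(t)+g(\tau))$ with $g=U^{1-s}\left\|D\varphi\right\|(X_\cdot)\in L^p(0,T)$, its insertion into the Gagliardo seminorm, the symmetry reduction to the $g(t)^r$ term, and Sobolev embedding for part (ii) --- is exactly the paper's (Propositions \ref{prop:basic-bound}, \ref{prop:key-estimate}, \ref{prop:lp-membership}). The gap is in how you distribute the exponents in part (i). A joint three-way H\"older inequality on $(0,T)^2$ applied to the product
\[
\Big(\tfrac{|X_t-X_\tau|}{|t-\tau|^\theta}\Big)^{sr}\, g(t)^r\, |t-\tau|^{(s\theta-\beta)r-1}
\]
with exponents $q/(sr)$, $p/r$ and the leftover $c$ determined by $\tfrac{sr}{q}+\tfrac rp+\tfrac1c=1$ does \emph{not} close: writing $\varepsilon=s\theta-\beta$, the third factor lies in $L^c((0,T)^2)$ only if $(1-\varepsilon r)c<1$, i.e.\ only if $\varepsilon>\tfrac1p+\tfrac sq$. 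In the admissible borderline case $\tfrac1p+\tfrac sq=\tfrac1r$ one has $c=+\infty$ and $|t-\tau|^{\varepsilon r-1}$ is unbounded for $\beta$ close to $s\theta$. So your closing claim that strictness of $\beta<s\theta$ alone makes the leftover power integrable is false for this organization; the joint split proves part (i) only under the hypothesis $\beta<s\theta-\tfrac1p-\tfrac sq$ of part (ii).

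The fix, which is what the paper encapsulates in Lemma \ref{lemma:gagliardo-embedding}, is to iterate rather than to split jointly: for each fixed $t$ apply H\"older in $\tau$ alone with the pair of exponents $q/(sr)$ and $q/(q-sr)$ (these depend only on $q$ and $sr$, not on $p$), obtaining
\[
\int_0^T\frac{|X_t-X_\tau|^{sr}}{|t-\tau|^{1+\beta r}}\,d\tau\le c\left(\int_0^T\frac{|X_t-X_\tau|^{q}}{|t-\tau|^{1+\theta q}}\,d\tau\right)^{\frac{sr}{q}}.
\]
Here the leftover power of $|t-\tau|$ is $-\gamma\tfrac{q}{q-sr}$ with $\gamma=1+(\tfrac{\beta}{s}-\theta)sr-\tfrac{sr}{q}<\tfrac{q-sr}{q}$, so the $\tau$-integral converges for \emph{every} $\beta<s\theta$, with no constraint involving $p$. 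Only afterwards does one apply a second, two-way H\"older in $t$ between $g(t)^r$ and the $\tfrac{sr}{q}$-power above, producing $\Vert g\Vert_{L^{qr/(q-sr)}(0,T)}^r[X]_{\theta,q}^{sr}$; the hypothesis $\tfrac1p+\tfrac sq\le\tfrac1r$ enters only at the very last step through $\tfrac{qr}{q-sr}\le p$ and the nesting of $L^p$-spaces on a bounded interval. Your $L^r$-membership sketch and part (ii) are essentially workable (the paper fixes a good reference time $t_0$ rather than an average in Proposition \ref{prop:lp-membership}), but part (i) as you organized it would fail to reach the stated range of $\beta$.
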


\begin{remark}\mbox{}
\begin{enumerate}
\item[(i)] For our purposes Theorem \ref{thm:sobolev-membership} (i) is of interest, more precisely, the case that $q$ is small enough to have $s\theta\leq \frac{1}{p}+\frac{s}{q}$. 
For larger $q$ statement (ii) tells that we implicitly assumed that $X$ is H\"older continuous, and this case had been discussed in \cite[Theorem 2.13 (i)]{HTV20}. In the limit case $q=+\infty$ we recover \cite[first part of Theorem 2.15]{HTV20}. 
\item[(ii)] If $p=1$, then $r\geq 1$ together with $\frac{1}{p}+\frac{s}{q}\leq \frac{1}{r}$ forces $q=+\infty$ and $r=1$.  In order to show the Sobolev regularity of a composition $\varphi\circ X$  involving a discontinuous path $X$ Theorem \ref{thm:sobolev-membership} therefore requires $X$ to be $(s,p)$-variable w.r.t. $\varphi$ with some $p>1$.
\end{enumerate}
\end{remark}

\begin{example}
Suppose that $I=[0,T]$ and $X:I\to\mathbb{R}$ is Lipschitz with $|X'|$ bounded away from zero $\mathcal{L}^1$-a.e. and $X'$ changes sign only finitely many times on $[0,T]$. Let $\varphi=\mathbf{1}_{(a,b)}$, where $0<a<b<T$. By Examples \ref{Ex:Lipschitzcorr}, \ref{Ex:indicator} and \ref{Ex:LTtovar} the function $X$ is $(s,p)$-variable w.r.t $\varphi$ if $sp<1$. Since we can choose $q=+\infty$ and $\theta$ arbitrarily close to $1$, Theorem \ref{thm:sobolev-membership} can be used to conclude that $\varphi\circ X\in W^{\beta,r}(0,T)$ for any $\beta$ and $r$ such that $\beta r<1$, as expected.
\end{example}

\begin{example}\label{Ex:compofBm}
Suppose that $X=B^H$ is an $n$-dimensional fractional Brownian motion with Hurst index $0<H<1$ and let $0<\theta<H$. If $n=1$, then we can find an event of probability one on which any realization of $X$ is $\theta$-H\"older continuous and has bounded local times. If $I$ and $\varphi$ are as in the preceding example, then we can apply Theorem \ref{thm:sobolev-membership} with $q=+\infty$ and $\theta$ arbitrarily close to $H$. It follows that for any $\beta$ and $r$ with $\beta r<H$ the composition $\varphi\circ X$ is in $W^{\beta,r}(0,T)$ $\mathbb{P}$-a.s. Now suppose that $n\geq 2$, $H>\frac{1}{n}$, $s\in (0,1)$ and that $\varphi\in BV(\mathbb{R}^n)$ is a function that makes \eqref{E:oldboundfBm} finite. Then we can find an event of full probability on which $X$ is both $\theta$-H\"older continuous and $(s,1)$-variable w.r.t. $\varphi$, and Theorem \ref{thm:sobolev-membership}, applied with $p=1$, $q=+\infty$ and $s$ as in \eqref{E:oldboundfBm}, shows that for any $\beta<sH$ we have $\varphi\circ X\in W^{\beta,1}(0,T)$ $\mathbb{P}$-a.s. If $\varphi$ makes \eqref{E:newboundfBM} finite, then we can apply the theorem with $p=2$ to find that for any $\beta<sH$ we have $\varphi\circ X\in W^{\beta,2}(0,T)$ $\mathbb{P}$-a.s.
\end{example}

\begin{example}\label{Ex:compoLevy}
Suppose that $I=[0,T]$ and that $X:I\to \mathbb{R}$ is a symmetric $\alpha$-stable L\'evy process with $1<\alpha<2$. If $1\leq q<\alpha$ and $\theta<\frac{1}{\alpha}$, then by \cite[Th\'eor\`eme VI.1]{CKR93} the continuation by zero of $X$ is an element of the Besov space $B_{q,\infty}^\theta(\mathbb{R})$ $\mathbb{P}$-a.s. By \cite[2.3.2, Proposition 2 (7)]{Triebel83} it follows that $\mathbb{P}$-a.s. the continuation by zero of $X$ is an element of the Besov space $B_{q,q}^\theta(\mathbb{R})$, which coincides with 
$W^{\theta,q}(\mathbb{R})$, see \cite[Section 2.6.1]{Triebel92} or \cite[Section V.5]{Stein70} and \cite[Section 2.3.5]{Triebel83}. Since $X$ has $\mathbb{P}$-a.s. locally bounded local times, it follows that for $\varphi=\mathbf{1}_{(a,b)}$ and any $\beta<\frac{1}{\alpha+1}$ we have $\varphi\circ X\in W^{\beta,1}(0,T)$ $\mathbb{P}$-a.s. To see this, note that if $p>\frac{\alpha+1}{\alpha}$, then we can find some $q<\alpha$ such that $\frac{1}{p}+\frac{1}{pq}<1$, and given any $s<\frac{\alpha}{\alpha+1}$ we can find $p$ satisfying the preceding and $sp<1$. The same conclusion could be reached using \cite[Theorem 1.1]{Schilling97}. A straightforward generalization to other L\'evy processes can be provided using \cite[Theorems 3.2 and 3.3]{Herren97}. Now suppose that $0<\alpha<2$, $\alpha\vee \frac12 <q$ and $0\vee (\frac{1}{q}-1)<\theta<\frac{1}{q}$. Then $X\in B_{q,\infty}^\theta (0,T)$ $\mathbb{P}$-a.s. by \cite[Theorem 1.1]{Schilling97}, and similarly as before this implies $X\in W^{\theta,q}(0,T)$  $\mathbb{P}$-a.s. for $\theta$ in the specified range. If in addition $q>1$ and $\varphi\in BV(\mathbb{R})$ is such that (\ref{E:newboundLevy}) holds with some $p>\frac{q+1}{q}$, then for any $\beta<\frac{s}{q}$ we have $\varphi\circ X\in W^{\beta,1}(0,T)$. We point out that the results on path regularity in \cite{Schilling97} and \cite{Schilling00} apply to general Feller processes.
\end{example}


To prove Theorem \ref{thm:sobolev-membership} we begin with the following well-known and elementary lemma. 
\begin{lemma}
\label{lemma:gagliardo-embedding}
Let $1\leq q<+\infty$ and $\theta\in(0,1)$. Then for any $p\leq q$, $0<\beta<\theta$ and $\mathcal{L}^1$-a.e. $t\in [0,T]$ we have
\[\left[\int_0^T \frac{|X_t-X_\tau|^{p}}{|t-\tau|^{1+\beta p}} d\tau \right]^{\frac{1}{p}} \leq c \left[\int_0^T \frac{|X_t-X_\tau|^{q}}{|t-\tau|^{1+\theta q}} d\tau\right]^{\frac{1}{q}}\]
with a constant $c>0$ depending only on $p,q, T, \beta$ and $\theta$. In particular, if $[X]_{\theta,q} < \infty$, then
$$
t\mapsto \int_0^T \frac{|X_t-X_\tau|^{p}}{|t-\tau|^{1+\beta p}} d\tau \in L^{\frac{q}{p}}(0,T).
$$
\end{lemma}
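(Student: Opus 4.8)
The plan is to reduce the mixed $(p,\beta)$ versus $(q,\theta)$ comparison to a single elementary observation about the time integral, applied pointwise in $t$, and then pass to the $L^{q/p}$-integrability statement by a second application of the same idea. First I would fix $t\in[0,T]$ and split the inner $\tau$-integral into the region $\{|t-\tau|\le 1\}$ and, if $T>1$, the region $\{|t-\tau|>1\}$; on the bounded region the new exponent is more singular in $|t-\tau|$ but the ``extra'' $|t-\tau|^{(\theta-\beta)p}$ gained from writing $1+\beta p = (1+\theta p) - (\theta-\beta)p$ is bounded by a constant since $\theta>\beta$, while on the unbounded region the roles reverse and the finite interval length keeps everything under control; in either case this lets me replace the exponent $1+\beta p$ by $1+\theta p$ at the cost of a constant depending only on $T,\beta,\theta$. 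So without loss of generality I may assume $\beta=\theta$ in the exponent, i.e. I must bound $\big(\int_0^T |X_t-X_\tau|^p |t-\tau|^{-(1+\theta p)}d\tau\big)^{1/p}$ by the analogous expression with $p$ replaced by $q$.

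The key step is then Jensen's (or H\"older's) inequality with respect to the finite measure $|t-\tau|^{-(1+\theta q)}\,d\tau$ on $\{|t-\tau|\le 1\}$ after rewriting. Concretely, I would write
\[
\frac{|X_t-X_\tau|^p}{|t-\tau|^{1+\beta p}}
= \Big(\frac{|X_t-X_\tau|^q}{|t-\tau|^{1+\theta q}}\Big)^{p/q}\cdot |t-\tau|^{\frac{p}{q}(1+\theta q)-(1+\beta p)},
\]
note that the residual power of $|t-\tau|$ is integrable near $0$ precisely because $p\le q$ and $\beta<\theta$ (one checks $\frac{p}{q}(1+\theta q)-(1+\beta p) > -1$, again after the preliminary reduction to $\beta=\theta$ this is $-1+p/q > -1$, which uses nothing beyond $p/q>0$), and apply H\"older with exponents $q/p$ and $(q/p)'$ in the $\tau$-integral. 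This yields exactly the claimed pointwise inequality with a constant absorbing $\big(\int_0^1 |t-\tau|^{(\ldots)}d\tau\big)^{(q/p)'/\ldots}$, uniformly in $t$. The phrase ``for $\mathcal{L}^1$-a.e. $t$'' is needed only because the right-hand side may be infinite on a null set of $t$ even when $[X]_{\theta,q}<\infty$, and on that null set the inequality is vacuous.

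For the final ``in particular'' statement, raising the pointwise inequality to the power $q/p$ and integrating in $t$ over $(0,T)$ gives
\[
\int_0^T\Big(\int_0^T \frac{|X_t-X_\tau|^p}{|t-\tau|^{1+\beta p}}d\tau\Big)^{q/p}dt
\le c\int_0^T\int_0^T \frac{|X_t-X_\tau|^q}{|t-\tau|^{1+\theta q}}d\tau\,dt = c\,[X]_{\theta,q}^q < +\infty,
\]
which is precisely $t\mapsto \int_0^T |X_t-X_\tau|^p|t-\tau|^{-(1+\beta p)}d\tau \in L^{q/p}(0,T)$. I do not expect any genuine obstacle here; the only mild bookkeeping point is the treatment of $T>1$ versus $T\le 1$ in the first reduction, and making sure all constants depend only on the listed parameters $p,q,T,\beta,\theta$ and not on $X$. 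Everything else is a direct application of H\"older's inequality on a finite measure space.
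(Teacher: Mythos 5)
Your displayed decomposition is exactly the paper's (writing $\gamma:=1+(\beta-\theta)p-\tfrac{p}{q}$, your residual exponent is $-\gamma$), but the integrability check you propose is the wrong one, and the preliminary reduction to $\beta=\theta$ makes the argument fail for $p<q$. After applying H\"older with exponents $q/p$ and $(q/p)'=\tfrac{q}{q-p}$, the factor that must be integrable in $\tau$ is not $|t-\tau|^{-\gamma}$ but $|t-\tau|^{-\gamma\frac{q}{q-p}}$, so the condition you need is $\gamma\tfrac{q}{q-p}<1$, i.e.\ $\gamma<\tfrac{q-p}{q}$ --- strictly stronger than the $\gamma<1$ you verify. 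This is precisely where $\beta<\theta$ is consumed: $\gamma=\tfrac{q-p}{q}+(\beta-\theta)p<\tfrac{q-p}{q}$. If you first give away the gap via $|t-\tau|^{-(1+\beta p)}\le T^{(\theta-\beta)p}|t-\tau|^{-(1+\theta p)}$, then $\gamma$ becomes exactly $\tfrac{q-p}{q}$ and the second H\"older factor is $\bigl(\int_0^T|t-\tau|^{-1}\,d\tau\bigr)^{(q-p)/q}=+\infty$, so the proof collapses. Your remark that the key step ``uses nothing beyond $p/q>0$'' is the symptom: with $\beta=\theta$ and $p<q$ the claim would be a same-smoothness comparison of $(\theta,q)$- and $(\theta,p)$-Gagliardo integrands on a bounded interval, which is exactly the endpoint this H\"older argument cannot reach.

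The repair is simply to skip the reduction and run H\"older directly on your displayed identity, verifying the conjugate-exponent condition $\gamma<\tfrac{q-p}{q}$ using $\beta<\theta$; that is the paper's proof verbatim. The crude bound $|t-\tau|^{-(1+\beta p)}\le T^{(\theta-\beta)p}|t-\tau|^{-(1+\theta p)}$ is the whole argument only in the case $p=q$, where no H\"older step is needed (and that is how the paper treats it). Your passage from the pointwise inequality to the $L^{q/p}(0,T)$ membership by raising to the power $q/p$ and integrating in $t$ is correct.
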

We recall the simple arguments for the reader's convenience.
\begin{proof}
Let $p<q$ and set $\gamma := 1 + (\beta - \theta)p - \frac{p}{q}$.
Since $\beta<\theta$ and $p<q$, we have $\gamma < \frac{q-p}{q}$. Now H\"older inequality implies
\begin{equation*}
\begin{split}
\int_0^T \frac{|X_t-X_\tau|^{p}}{|t-\tau|^{1+\beta p}} d\tau 
& = \int_0^T |t-\tau|^{-\gamma}\frac{|X_t-X_\tau|^{p}}{|t-\tau|^{1+\beta p - \gamma}} d\tau \\
& \leq \left(\int_0^T |t-\tau|^{-\gamma \frac{q}{q-p}}d\tau \right)^{\frac{q-p}{q}} \left(\int_0^T \frac{|X_t-X_\tau|^{p\frac{q}{p}}}{|t-\tau|^{(1+\beta p-\gamma)\frac{q}{p}}} d\tau\right)^{\frac{p}{q}} \\
&\leq C\left(\int_0^T \frac{|X_t-X_\tau|^{q}}{|t-\tau|^{1+\theta q}} d\tau\right)^{\frac{p}{q}}.
\end{split}
\end{equation*}
This verifies the claim for $p<q$, while the case $p=q$ follows directly from the observation
$|t-\tau|^{-1-\beta p} \leq T^{(\theta - \beta)p}|t-\tau|^{-1-\theta p}$, valid for all $t\neq \tau$.  
\end{proof}

We also recall an immediate consequence of (\ref{E:trivialbound}) and (\ref{E:backendMVT}). As before, the approximate discontinuity set of $\varphi$ is denoted by $S_\varphi$.
\begin{proposition}
\label{prop:basic-bound}
Let $\varphi\in BV(\mathbb{R}^n)$ and suppose that 
$X\in V(\varphi,s,1)$ is a path. Then for any $t,\tau\in [0,T]$ such that $X_t,X_\tau\notin S_\varphi$ we have
$$
|\varphi(X_t)-\varphi(X_\tau)| \leq c |X_t-X_\tau|^{s}[U^{1-s}\left\|D\varphi\right\|(X_t) + U^{1-s}\left\|D\varphi\right\|(X_\tau)].
$$
with a constant $c>0$ depending only on $n$, $s$ and $p$. 
\end{proposition}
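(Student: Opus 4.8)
The plan is that this is genuinely immediate, exactly as the phrase \emph{``an immediate consequence''} preceding the statement suggests: I would simply chain the pointwise oscillation estimate (\ref{E:backendMVT}) with the trivial domination (\ref{E:trivialbound}) of the fractional maximal function by the Riesz potential. No new analysis is needed beyond checking that the hypothesis places $X_t$ and $X_\tau$ among the admissible points for (\ref{E:backendMVT}).

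First I would fix, once and for all, a Lebesgue representative $\widetilde{\varphi}$ of the $\mathcal{L}^n$-class denoted $\varphi$ (as done at the outset of the proof of Proposition \ref{P:Riesztosmooth}); such a representative exists because $\varphi\in BV(\mathbb{R}^n)\subset L^1_{\loc}(\mathbb{R}^n)$. By the definition of the approximate discontinuity set $S_\varphi$, every $z\in\mathbb{R}^n\setminus S_\varphi$ possesses an approximate limit $\lambda_\varphi(z)$, and $\widetilde{\varphi}(z)=\lambda_\varphi(z)$ there, so each such $z$ is a Lebesgue point of $\widetilde{\varphi}$. Since by hypothesis $X_t,X_\tau\notin S_\varphi$, the two points $X_t$ and $X_\tau$ are Lebesgue points of $\varphi$, which is precisely the admissibility required to invoke (\ref{E:backendMVT}).

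Applying (\ref{E:backendMVT}) with $x=X_t$ and $y=X_\tau$ then gives
\[
|\varphi(X_t)-\varphi(X_\tau)|\leq c\,|X_t-X_\tau|^{s}\big(\mathcal{M}_{1-s}\left\|D\varphi\right\|(X_t)+\mathcal{M}_{1-s}\left\|D\varphi\right\|(X_\tau)\big),
\]
with $c$ depending only on $n$ and $s$. Bounding each maximal-function term by its Riesz potential by means of (\ref{E:trivialbound}), applied with $\gamma=1-s$ and $\nu=\left\|D\varphi\right\|$ at the points $X_t$ and $X_\tau$, and absorbing the factor $c(1-s,n)^{-1}$ into the constant, yields the asserted inequality. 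The final constant still depends only on $n$ and $s$; dependence on $p$ is vacuous here, as $p=1$ is fixed throughout.

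The only point that deserves a line of care --- and is the closest thing to an obstacle --- is the matching of hypotheses: (\ref{E:backendMVT}) is phrased for Lebesgue points, whereas the proposition is stated in terms of $S_\varphi$, so the passage to a Lebesgue representative at the very start is what reconciles the two formulations. I would also note that the variability assumption $X\in V(\varphi,s,1)$ is not strictly needed for the pointwise inequality itself (should a maximal function be infinite, the bound is merely vacuous); it is listed because, via the remark following Definition \ref{D:SVp-condition} together with the occupation time formula (\ref{E:occutimeformula}), it guarantees $U^{1-s}\left\|D\varphi\right\|(X_\cdot)\in L^1(0,T)$, so that the right-hand side is finite for $\mathcal{L}^1$-a.e.\ $t$ and the estimate is actually usable in the proof of Theorem \ref{thm:sobolev-membership}.
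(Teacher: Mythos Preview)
Your proposal is correct and matches the paper's approach exactly: the paper states the proposition as ``an immediate consequence of (\ref{E:trivialbound}) and (\ref{E:backendMVT})'' without further proof, and you have simply spelled out that chain of inequalities together with the observation that points outside $S_\varphi$ are Lebesgue points of the chosen representative. Your remarks on the role of the variability hypothesis and the dependence of the constant on $p$ are accurate side comments.
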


The following result is the multiplicative key estimate for Theorem \ref{thm:sobolev-membership}. It is an analog of \cite[Proposition 4.28]{HTV20}. Instead of a H\"older seminorm of the path $X$ used there, it involves a Gagliardo seminorm of $X$, which can also be finite in the discontinuous case. 

\begin{proposition}
\label{prop:key-estimate}
Let $\varphi\in BV(\mathbb{R}^n)$, $s,\theta \in (0,1)$, $1\leq p<+\infty$ and $1\leq q \leq+\infty$. Suppose that $X$ is a path with $X \in W^{\theta,q}(0,T;\mathbb{R}^n)\cap V(\varphi,s,p)$.
Then for any $r\geq 1$ such that $\frac{1}{p}+\frac{s}{q}\leq \frac{1}{r}$ and any $\beta < s\theta$ we have
\begin{equation}\label{E:basicest2}
[\varphi\circ X]_{\beta,r}\leq c\:[X]_{\theta,q}^s \: \Vert U^{1-s}\left\|D\varphi\right\|(X_\cdot) \Vert_{L^p(0,T)}.
\end{equation} 
with $c>0$ depending only on $n,T,s,p,q,r,\beta$ and $\theta$.
\end{proposition}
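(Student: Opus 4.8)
The plan is to start from the pointwise bound in Proposition~\ref{prop:basic-bound}, raise it to the power $r$, and integrate against the Gagliardo kernel $|t-\tau|^{-1-\beta r}$ over $(0,T)^2$. Using Proposition~\ref{P:welldef}, the set of times $t$ with $X_t\in S_\varphi$ is $\mathcal{L}^1$-null, so the bound applies for $\mathcal{L}^2$-a.e.\ $(t,\tau)$ and
\[
[\varphi\circ X]_{\beta,r}^r\leq c\int_0^T\int_0^T\frac{|X_t-X_\tau|^{sr}}{|t-\tau|^{1+\beta r}}\bigl(U^{1-s}\|D\varphi\|(X_t)+U^{1-s}\|D\varphi\|(X_\tau)\bigr)^r\,dt\,d\tau.
\]
By symmetry it suffices to control the term with $U^{1-s}\|D\varphi\|(X_t)^r$ (the mixed/other terms are handled identically after relabelling, using $(a+b)^r\le 2^{r-1}(a^r+b^r)$). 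First I would split off the factor $U^{1-s}\|D\varphi\|(X_t)^r$ and apply Hölder's inequality in $t$ with exponents chosen so that this factor lands in $L^p$: writing $\frac1r=\frac1p+\bigl(\frac1r-\frac1p\bigr)$ and letting $\rho$ be defined by $\frac1\rho=\frac1r-\frac1p$ (note $\rho\ge q/s$ by hypothesis, and $\rho=+\infty$ exactly when $\frac1p=\frac1r$), we get
\[
[\varphi\circ X]_{\beta,r}^r\leq c\,\Vert U^{1-s}\|D\varphi\|(X_\cdot)\Vert_{L^p(0,T)}^r\left(\int_0^T\left(\int_0^T\frac{|X_t-X_\tau|^{sr}}{|t-\tau|^{1+\beta r}}\,d\tau\right)^{\rho/r}dt\right)^{r/\rho}.
\]

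The remaining task is to bound the inner double integral by $[X]_{\theta,q}^{sr}$. Here I would invoke Lemma~\ref{lemma:gagliardo-embedding}: with the substitution that the ``path'' in the lemma is $X$ raised to an appropriate power, one wants
\[
t\mapsto\int_0^T\frac{|X_t-X_\tau|^{sr}}{|t-\tau|^{1+\beta r}}\,d\tau\ \in\ L^{q/(sr)}(0,T),
\]
which is precisely the conclusion of the lemma applied with exponent $p_{\mathrm{L}}:=sr$, smoothness $\beta_{\mathrm{L}}:=\beta/s$, target exponent $q_{\mathrm{L}}:=q$, and comparison smoothness $\theta$. The hypotheses of the lemma require $p_{\mathrm{L}}\le q_{\mathrm{L}}$, i.e.\ $sr\le q$, and $\beta_{\mathrm{L}}<\theta$, i.e.\ $\beta<s\theta$ — the latter is exactly our standing assumption on $\beta$, and the former follows from $\frac1r\ge\frac1p+\frac sq\ge\frac sq$ so $r\le q/s$. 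Moreover $\frac{q}{sr}\ge\rho/r$ is equivalent to $\frac{q}{s}\ge\rho$, which is our hypothesis $\frac1p+\frac sq\le\frac1r$ rewritten; hence the $L^{\rho/r}$-norm appearing above is dominated (on the finite interval $(0,T)$) by the $L^{q/(sr)}$-norm, up to a constant depending on $T$. Combining, the inner factor is bounded by $c\,[X]_{\theta,q}^{sr}$, which after taking $r$-th roots yields \eqref{E:basicest2}. When $p=1$ one has $\rho=+\infty$, $q=+\infty$, $r=1$, and the argument degenerates appropriately: the Hölder step becomes $\sup_t U^{1-s}\|D\varphi\|(X_t)$ times $\int\int$, and the inner integral is controlled by $[X]_{\theta,\infty}^{s}$ via the $p=q$ case of Lemma~\ref{lemma:gagliardo-embedding}.

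I expect the main obstacle to be \emph{bookkeeping the exponents correctly} rather than any deep estimate: one must verify that the two constraints $\beta<s\theta$ and $\frac1p+\frac sq\le\frac1r$ are exactly what is needed to make both the Hölder splitting and the application of Lemma~\ref{lemma:gagliardo-embedding} legitimate, including the boundary cases $q=+\infty$ and $p=1$ where several exponents become infinite simultaneously. A minor subtlety is that Lemma~\ref{lemma:gagliardo-embedding} is stated for $\mathbb{R}^n$-valued paths with the Euclidean modulus, so applying it with integrand $|X_t-X_\tau|^{sr}$ requires reading the lemma with its ``$p$'' equal to $sr$, which is legitimate since the lemma's proof only uses the triangle-inequality-free Hölder argument and makes no structural use of $X$. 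One should also note at the outset that if $\varphi\circ X$ failed to lie in $L^r(0,T)$ the Gagliardo seminorm statement would still be the content of the proposition (membership in the full space $W^{\beta,r}$ is addressed separately in Theorem~\ref{thm:sobolev-membership} using $\varphi\in L^\infty$), so no integrability of $\varphi\circ X$ itself needs to be established here.
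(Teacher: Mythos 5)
Your argument is correct and essentially the paper's own: both rest on Proposition~\ref{prop:basic-bound} plus symmetry, Lemma~\ref{lemma:gagliardo-embedding}, and a single application of H\"older's inequality, the only difference being that you apply H\"older first (with exponents $p/r$ and $p/(p-r)$) and the lemma afterwards, whereas the paper applies the lemma pointwise in $t$ first and then H\"older with exponents $q/(q-sr)$ and $q/(sr)$ --- a harmless permutation, since the constraints $sr\leq q$, $\beta<s\theta$ and $\frac{qr}{q-sr}\leq p$ you verify are exactly the ones the paper uses. The one blemish is your description of the degenerate case $p=r$ (which forces $q=+\infty$, and which the paper sidesteps by citing \cite{HTV20}): there the H\"older pairing must put the supremum on $t\mapsto\int_0^T|X_t-X_\tau|^{sr}|t-\tau|^{-1-\beta r}\,d\tau$ (bounded by $c\,[X]_{\theta,\infty}^{sr}$ since $\beta<s\theta$) and the $L^p$-norm on $U^{1-s}\left\|D\varphi\right\|(X_\cdot)$, not the other way around as you wrote.
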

\begin{proof}
Since the case $q=\infty$ is already covered in \cite{HTV20} it suffices to consider $q<\infty$. 
Using Proposition \ref{prop:basic-bound}, the symmetry of the integrand, Lemma \ref{lemma:gagliardo-embedding} and H\"older inequality we obtain
\begin{equation*}
\begin{split}
[\varphi\circ X]_{\beta,r}^r &= \int_0^T\int_0^T \frac{|\varphi(X_t)-\varphi(X_\tau)|^r}{(t-\tau)^{1+\beta r}}\:d\tau dt\\
&\leq c\int_0^T [U^{1-s}\left\|D\varphi\right\|(X_t)]^r \int_0^T \frac{|X_t-X_\tau|^{sr}}{|t-\tau|^{1+\beta r}} d\tau dt \\
&\leq c\int_0^T [U^{1-s}\left\|D\varphi\right\|(X_t)]^r \left(\int_0^T \frac{|X_t-X_\tau|^{q}}{|t-\tau|^{1+\theta q}} d\tau\right)^{\frac{sr}{q}} dt \\
&\leq c\left(\int_0^T [U^{1-s}\left\|D\varphi\right\|(X_t)]^{\frac{qr}{q-sr}} dt\right)^{\frac{q-sr}{q}}\left(\int_0^T\int_0^T \frac{|X_t-X_\tau|^{q}}{|t-\tau|^{1+\theta q}} d\tau dt \right)^{\frac{sr}{q}} \\
&= c\:\Vert U^{1-s}\left\|D\varphi\right\|(X_\cdot) \Vert^r_{L^{\frac{qr}{q-sr}}(0,T)} [X]_{\theta,q}^{sr}.
\end{split}
\end{equation*}
From $\frac{1}{p}+\frac{s}{q}\leq \frac{1}{r}$ it follows that $\frac{qr}{q-sr} \leq p$, and hence
\[\Vert U^{1-s}\left\|D\varphi\right\|(X_\cdot) \Vert_{L^{\frac{qr}{q-sr}}(0,T)} \leq c\Vert U^{1-s}\left\|D\varphi\right\|(X_\cdot) \Vert_{L^p(0,T)}.\]
\end{proof}

To conclude the membership of $\varphi \circ X$ in $W^{\beta,r}(0,T)$ it now remains to verify its $r$-integrability.

\begin{proposition}
\label{prop:lp-membership}
Let $\varphi\in BV(\mathbb{R}^n)$, $s \in (0,1)$, $1\leq p<+\infty$ and $1\leq q \leq+\infty$. Suppose that $X$ is a path such that
\[X \in L^q(0,T;\mathbb{R}^n)\cap V(\varphi,s,p).\] 
Then for any $r\geq 1$ such that $\frac{1}{p}+\frac{s}{q}\leq \frac{1}{r}$ we have
$\varphi \circ X \in L^{r}(0,T)$.
\end{proposition}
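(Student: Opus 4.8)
The plan is to reduce the claim to the single endpoint exponent determined by $\frac1p+\frac sq$, to freeze one time parameter using Proposition \ref{prop:basic-bound}, and then to split into three elementary pieces handled by H\"older's inequality. First I would record the standing facts: since $\varphi\in BV(\mathbb{R}^n)$ and $X\in V(\varphi,s,p)$, Proposition \ref{P:welldef} shows that $\varphi\circ X$ is well defined in the sense of Definition \ref{D:composition}, represented by $t\mapsto\widetilde{\varphi}(X_t)$ for a Lebesgue representative $\widetilde{\varphi}$ of $\varphi$, and that the set $N:=\{t\in[0,T]:X_t\in S_\varphi\}$ is $\mathcal{L}^1$-null; moreover $t\mapsto U^{1-s}\left\|D\varphi\right\|(X_t)$ belongs to $L^p(0,T)$, exactly as used in the proof of Proposition \ref{prop:key-estimate}. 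From $\frac1p+\frac sq\le\frac1r$ together with $r\ge1$ one gets $r_0:=\big(\frac1p+\frac sq\big)^{-1}\in[1,+\infty)$ and $r\le r_0$, so on the finite interval $(0,T)$ one has $L^{r_0}(0,T)\subset L^{r}(0,T)$; hence it suffices to prove $\varphi\circ X\in L^{r_0}(0,T)$.

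Next I would fix, once and for all, a point $t_0\in(0,T)\setminus N$ at which $U^{1-s}\left\|D\varphi\right\|(X_{t_0})<+\infty$; $\mathcal{L}^1$-almost every $t_0$ qualifies, since $U^{1-s}\left\|D\varphi\right\|(X_\cdot)\in L^p(0,T)$. For $\mathcal{L}^1$-a.e.\ $t$ we then have $X_t\notin S_\varphi$, and Proposition \ref{prop:basic-bound} applied to the pair $(t,t_0)$ gives
\[
|\widetilde{\varphi}(X_t)|\le|\widetilde{\varphi}(X_{t_0})|+c\,|X_t-X_{t_0}|^{s}\big(U^{1-s}\left\|D\varphi\right\|(X_t)+U^{1-s}\left\|D\varphi\right\|(X_{t_0})\big).
\]
Using $|X_t-X_{t_0}|^{s}\le 2^{s}\big(|X_t|^{s}+|X_{t_0}|^{s}\big)$ and absorbing every quantity depending only on $t_0$ into finite constants, I would bound $|\widetilde{\varphi}(X_t)|$, for a.e.\ $t$, by a finite constant plus constant multiples of $|X_t|^{s}$, of $U^{1-s}\left\|D\varphi\right\|(X_t)$, and of the product $|X_t|^{s}\,U^{1-s}\left\|D\varphi\right\|(X_t)$.

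It then remains to check that these three functions lie in $L^{r_0}(0,T)$. Since $X\in L^{q}(0,T;\mathbb{R}^n)$ we have $|X_\cdot|^{s}\in L^{q/s}(0,T)$ (understood as $L^{\infty}$ when $q=+\infty$), while $U^{1-s}\left\|D\varphi\right\|(X_\cdot)\in L^{p}(0,T)$; because $\frac1{r_0}=\frac sq+\frac1p$ one has $r_0\le q/s$ and $r_0\le p$, so on $(0,T)$ the first two functions already belong to $L^{r_0}(0,T)$, and H\"older's inequality with exponents $q/s$ and $p$, whose reciprocals sum to $\frac1{r_0}$, yields
\[
\big\||X_\cdot|^{s}\,U^{1-s}\left\|D\varphi\right\|(X_\cdot)\big\|_{L^{r_0}(0,T)}\le \left\|X\right\|_{L^{q}(0,T;\mathbb{R}^n)}^{s}\,\big\|U^{1-s}\left\|D\varphi\right\|(X_\cdot)\big\|_{L^{p}(0,T)}<+\infty.
\]
Combining these three bounds gives $\varphi\circ X\in L^{r_0}(0,T)\subset L^{r}(0,T)$, as desired. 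I do not expect any serious obstacle here: the analytic content is entirely carried by Propositions \ref{P:welldef} and \ref{prop:basic-bound}, and the only care needed is bookkeeping, namely that a legitimate freezing point $t_0$ exists (so that $\widetilde{\varphi}(X_{t_0})$ and $U^{1-s}\left\|D\varphi\right\|(X_{t_0})$ are finite and $X_{t_0}\notin S_\varphi$) and that the exponent identity $\frac sq+\frac1p=\frac1{r_0}$, together with finiteness of $(0,T)$, correctly delivers each inclusion $L^{a}(0,T)\subset L^{r_0}(0,T)$ used above.
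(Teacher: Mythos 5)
Your proposal is correct and follows essentially the same route as the paper: freeze a good base point $t_0$ (with $X_{t_0}\notin S_\varphi$ and $U^{1-s}\left\|D\varphi\right\|(X_{t_0})<+\infty$), apply Proposition \ref{prop:basic-bound} to the pair $(t,t_0)$, split $|X_t-X_{t_0}|^{s}$, and control the cross term $|X_\cdot|^{s}\,U^{1-s}\left\|D\varphi\right\|(X_\cdot)$ by H\"older's inequality with exponents $q/s$ and $p$. The only cosmetic difference is that you first reduce to the endpoint exponent $r_0=(\tfrac1p+\tfrac sq)^{-1}$ while the paper works with $r$ directly; both are fine.
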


The argument is as in \cite[Lemma 4.30]{HTV20}.

\begin{proof}
Again it suffices to consider the case $q<\infty$ as $q=\infty$ is already covered by \cite[Lemma 4.30]{HTV20}.
By \cite[Corollary 4.4]{HTV20} we can choose $t_0 \in [0,T]$ such that $X_{t_0} \in \mathbb{R}^n\setminus S_\varphi$ and 
\[M(t_0):=\max\left\lbrace |\varphi(X_{t_0})|, U^{1-s}\left\|D\varphi\right\|(X_{t_0})\right\rbrace < \infty.\]
Clearly $|\varphi(X_t)|^r \leq 2^{r-1}\left[|\varphi(X_t)-\varphi(X_{t_0})|^r+|\varphi(X_{t_0})|^r\right]$
for $\mathcal{L}^1$-a.e. $t\in [0,T]$, and for $t$ such that $X_t\notin S_{\varphi}$ Proposition \ref{prop:basic-bound} implies that
\begin{align}
|\varphi(X_t)-\varphi(X_{t_0})|^r & \leq c|X_t-X_{t_0}|^{sr}[U^{1-s}\left\|D\varphi\right\|(X_t)^r + U^{1-s}\left\|D\varphi\right\|(X_{t_0})^r]\notag\\
&\leq c(|X_t|^{sr}+M(t_0)^{sr})[U^{1-s}\left\|D\varphi\right\|(X_t)^r+M(t_0)^r].\notag
\end{align}
Since $r\leq p$ and $rs\leq q$ we therefore obtain $\int_0^T |\varphi(X_t)|^r dt<+\infty$, provided that 
\[\int_0^T |X_t|^{sr}U^{1-s}\left\|D\varphi\right\|(X_t)^r dt < \infty.\]
But the finiteness of this integral can be seen using H\"older's inequality,
\begin{align}
\int_0^T |X_t|^{sr}U^{1-s}\left\|D\varphi\right\|(X_t)^r dt &\leq \left(\int_0^T U^{1-s}\left\|D\varphi\right\|(X_t)^{\frac{qr}{q-sr}}dt\right)^{\frac{q-sr}{q}} \left(\int_0^T |X_t|^{q}dt\right)^{\frac{sr}{q}} \notag\\ 
&\leq c\Vert U^{1-s}\left\|D\varphi\right\|(X_\cdot) \Vert^r_{p} \Vert X \Vert^{sr}_{q}.\notag
\end{align}
Note that the last inequality holds since $\frac{qr}{q-sr} \leq p$. The result now follows.
\end{proof}

We prove Theorem \ref{thm:sobolev-membership}.
\begin{proof}[Proof of Theorem \ref{thm:sobolev-membership}]
Item (i) follows from Proposition \ref{prop:key-estimate} and Proposition \ref{prop:lp-membership}. Item (ii) then follows directly from the Sobolev embedding theorem, \cite[Theorem 8.2]{DiNezza}: Choosing $r$ such that $\frac{1}{r} = \frac{1}{p}+\frac{s}{q}$ and $\beta<s\theta$ such that $r\beta>1$ we obtain $\varphi \circ X \in W^{\beta - \frac{1}{p}-\frac{s}{q},\infty}(0,T)$. The H\"older continuity of $X$ follows similarly, note that $\theta q > 1 + \frac{q}{ps} > 1$ by the hypotheses in (ii).
\end{proof}

\section{Existence of generalized Stieltjes integrals}\label{S:integral}

As an application we illustrate how Theorem \ref{thm:sobolev-membership} can be used to guarantee the existence of generalized Stieltjes integrals in the sense of \cite{Zahle98, Zahle01}.

Let $T>0$, let $f,g:[0,T]\to \mathbb{R}$ be Borel functions such that $f$ is right-continuous at $0$ and $g$ left-continuous at $T$. Set 
\[f_0(t) = f(t) - f(0)\quad \text{ and }\quad g_T(t) = g(t) - g(T), \quad t\in [0,T].\] 
Let $\alpha \in (0,1)$ and suppose that the left-sided Weyl--Marchaud derivative 
\begin{equation}
\label{eq:left-derivative}
 D_{0+}^\alpha f_0(t)
 = \frac{1}{\Gamma(1-\alpha)}\left( \frac{f(t)-f(0)}{t^\alpha} + \alpha \int_0^t \frac{f(t)-f(u)}{(t-u)^{\alpha+1}}  du \right), \quad t\in (0,T]
\end{equation}
of $f_0$ of order $\alpha$ exists as a member of $L^p(0,T)$ and the right-sided Weyl--Marchaud derivative 

\[
 D_{T-}^{1-\alpha} g_T(t)
 = \frac{(-1)^{1-\alpha}}{\Gamma(\alpha)}\left( \frac{g(t)-g(T)}{(T-t)^{1-\alpha}} + (1-\alpha) \int_t^T \frac{g(t)-g(u)}{(u-t)^{2-\alpha}}  du \right) \quad t\in[0,T)
\]
of $g_T$ of order $1-\alpha$ exists as a member of $L^q(0,T)$, where $\frac{1}{p}+\frac{1}{q}\leq 1$. Then the generalized Stieltjes integral in the sense of \cite{Zahle98}, defined as 
\begin{equation}\label{eq:ZSIntegral}
\int_0^T f(u) dg(u):= (-1)^\alpha \int_0^T D^{\alpha}_{0+} f_0(t) \, D^{1-\alpha}_{T-} g_T(t)dt + f(0)(g(T)-g(0)),
\end{equation}
exists, and its value does not depend on the choice of $\alpha$, see \cite[Section 2]{Zahle98}. 
\begin{remark}\label{R:simplify}
If $\alpha p<1$ then the right-continuity of $f$ at $0$ can be dropped and the right hand side in (\ref{eq:ZSIntegral}) can be replaced by $(-1)^\alpha \int_0^T D^{\alpha}_{0+} f(t) \, D^{1-\alpha}_{T-} g_T(t)dt$. See \cite{Zahle01}.
\end{remark}
The following is a variant of the well-known existence result for (\ref{eq:ZSIntegral}), phrased in terms of the Sobolev spaces $W^{\gamma,p}(0,T)$. The membership of the Borel function $f_0$ in a Sobolev space $W^{\gamma,p}(0,T)$ refers to the class it defines, and similarly for $g_T$.

\begin{theorem}\label{thm:sobolev-class-integral}
Let $\gamma,\delta\in(0,1)$ be such that $\gamma+\delta>1$ and $1\leq p,q \leq +\infty$ such that 
\begin{equation}\label{E:condleqone}
\frac{1}{p}+\frac{1}{q} < \gamma +\delta.
\end{equation}
Suppose that $f,g:[0,T]\to \mathbb{R}$ are Borel functions right- resp. left-continuous at $0$ and $T$ and satisfying $f\in W^{\gamma,p}(0,T)$ and $g\in W^{\delta,q}(0,T)$. Then the integral $\int_0^T f(u) dg(u)$, defined as in \eqref{eq:ZSIntegral}, exists. Moreover, 
there is a constant $c>0$ depending on $p$, $q$, $\gamma$, $\delta$ and $T$ such that
\begin{equation}
\label{eq:integral-bound}
\left|\int_0^T f(u)dg(u) - f(0)\left(g(T)-g(0)\right)\right| \leq c\Vert f_0\Vert_{W^{\gamma,p}(0,T)}\Vert g_T\Vert_{W^{\delta,q}(0,T)}.
\end{equation}
\end{theorem}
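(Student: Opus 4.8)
The plan is to show that the two Weyl--Marchaud derivatives appearing in \eqref{eq:ZSIntegral} exist in $L^p(0,T)$ respectively $L^q(0,T)$, and that their product is integrable, by reducing everything to the Gagliardo seminorms of $f_0$ and $g_T$. First I would recall the classical fractional Sobolev--Slobodeckij characterization: for $0<\alpha<1$ and $1\le p<\infty$ with $\alpha p<1$ one has $\alpha>\tfrac1p$ being the borderline case, and more relevantly the known boundedness of the fractional integration/differentiation operators on these spaces. Concretely, the point is that $D_{0+}^\alpha$ maps $W^{\gamma,p}(0,T)$ into $L^p(0,T)$ whenever $0<\alpha<\gamma$ (and into a space of lower Sobolev order in general); the same for the right-sided derivative $D_{T-}^{1-\alpha}$ mapping $W^{\delta,q}(0,T)$ into $L^q(0,T)$ when $1-\alpha<\delta$. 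Since the hypothesis $\gamma+\delta>1$ guarantees the open interval $(1-\delta,\gamma)$ is nonempty, I would \emph{choose} $\alpha$ in this interval, so that both mapping properties hold simultaneously. This is exactly the flexibility that Z\"ahle's construction allows: the value of the integral does not depend on the admissible choice of $\alpha$.

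Next I would estimate each derivative. For the left-sided one, writing $D_{0+}^\alpha f_0(t)$ as in \eqref{eq:left-derivative}, the first term $t^{-\alpha}(f(t)-f(0)) = t^{-\alpha} f_0(t)$ is controlled in $L^p$ by a Hardy-type inequality once $\alpha p<1$ (which holds since $\alpha<\gamma<1$, but one genuinely needs $\alpha p<1$ — this is where one may have to invoke Remark \ref{R:simplify} or argue that the boundary contribution is harmless because $f_0(0)=0$ and $f_0\in W^{\gamma,p}$ forces the needed decay; I would lean on the standard fact that $W^{\gamma,p}(0,T)\hookrightarrow$ the domain of $D_{0+}^\alpha$ for $\alpha<\gamma$). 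The second term, $\alpha\int_0^t \frac{f(t)-f(u)}{(t-u)^{\alpha+1}}\,du$, has $L^p$-norm bounded by a constant times $[f_0]_{\gamma,p}=[f]_{\gamma,p}$: take the $L^p$-norm in $t$, apply Minkowski's integral inequality or simply raise to the $p$-th power and use H\"older in the $u$-integral against the weight $(t-u)^{-1-\alpha+\varepsilon}$ to borrow integrability, landing on $\int_0^T\int_0^T \frac{|f(t)-f(u)|^p}{|t-u|^{1+\gamma p}}\,du\,dt$ after choosing $\varepsilon$ so the exponents match. Symmetrically, $\|D_{T-}^{1-\alpha} g_T\|_{L^q(0,T)}\le c\,\|g_T\|_{W^{\delta,q}(0,T)}$ for $1-\alpha<\delta$.

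Having both derivatives in $L^p$ and $L^q$ with $\tfrac1p+\tfrac1q<\gamma+\delta\le 1$ — wait, here is the subtlety: \eqref{E:condleqone} only gives $\tfrac1p+\tfrac1q<\gamma+\delta$, not $\tfrac1p+\tfrac1q\le 1$, so a priori $p$ and $q$ need not be conjugate or even H\"older-compatible. The resolution, and the step I expect to be the main obstacle, is to not insist that both derivatives land in the ``full'' $L^p$, $L^q$ but rather to track the actual Sobolev smoothness lost: $D_{0+}^\alpha$ maps $W^{\gamma,p}$ into $W^{\gamma-\alpha,p}$ and $D_{T-}^{1-\alpha}$ maps $W^{\delta,q}$ into $W^{\delta-(1-\alpha),q}$, and one needs a pairing $\int_0^T (\text{first})(\text{second})\,dt$ to make sense by a fractional integration-by-parts / duality argument, which requires $(\gamma-\alpha)+(\delta-1+\alpha)\ge$ something like $\tfrac1p+\tfrac1q-1$, i.e. $\gamma+\delta-1\ge \tfrac1p+\tfrac1q-1$, which is precisely \eqref{E:condleqone}. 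So I would instead argue: pick $\alpha\in(1-\delta,\gamma)$ with additionally $\alpha<\gamma$ chosen so that $D_{0+}^\alpha f_0\in L^{p'}$ and $D_{T-}^{1-\alpha}g_T\in L^{q'}$ for a \emph{conjugate} pair $\tfrac1{p'}+\tfrac1{q'}=1$ — possible by the Sobolev embedding $W^{\gamma-\alpha,p}(0,T)\hookrightarrow L^{p^*}(0,T)$ with $\tfrac1{p^*}=\tfrac1p-(\gamma-\alpha)$ (valid when $\gamma-\alpha<\tfrac1p$, else one even gets $L^\infty$ or H\"older), and similarly $W^{\delta-1+\alpha,q}\hookrightarrow L^{q^*}$. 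The inequality \eqref{E:condleqone} is exactly what makes $\tfrac1{p^*}+\tfrac1{q^*}= \tfrac1p+\tfrac1q-(\gamma+\delta-1)<1$, so H\"older's inequality closes the product estimate, giving existence of the integral in \eqref{eq:ZSIntegral} together with the bound \eqref{eq:integral-bound}; the $f(0)(g(T)-g(0))$ term is a pure boundary term that is subtracted off on the left of \eqref{eq:integral-bound}, and the constant $c$ absorbs all the operator norms, the H\"older constant, and the $T$-dependence from the embeddings. The main obstacle, to restate it, is bookkeeping the optimal choice of $\alpha$ and the chain $W^{\gamma,p}\xrightarrow{D^\alpha_{0+}} W^{\gamma-\alpha,p}\hookrightarrow L^{p^*}$ so that the exponents conspire correctly; everything else is a routine Hardy/Minkowski estimate of the Marchaud kernels.
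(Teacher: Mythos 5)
Your overall strategy is the same as the paper's: pick $\alpha\in(1-\delta,\gamma)$ (nonempty since $\gamma+\delta>1$), bound the Marchaud integral term by the Gagliardo seminorm (the paper does this via Lemma \ref{lemma:gagliardo-embedding}) and the boundary term $t^{-\alpha}f_0(t)$ by a Hardy-type inequality (Proposition \ref{prop:boundary-terms}), then handle the failure of $\tfrac1p+\tfrac1q\le 1$ by a Sobolev embedding that trades smoothness for integrability, with \eqref{E:condleqone} being exactly the condition that makes the traded exponents H\"older-compatible. You also correctly identified that the non-conjugate case is the real issue. Where you diverge is the order of operations, and this is where your version has a gap: you differentiate first and then embed, which requires the mapping property $D_{0+}^\alpha\colon W^{\gamma,p}(0,T)\to W^{\gamma-\alpha,p}(0,T)$. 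This lift property is not routine on a bounded interval for the Marchaud form: the boundary term $t^{-\alpha}f_0(t)$ lands in $L^p$ by Hardy, but nothing you wrote shows it retains fractional Sobolev regularity of order $\gamma-\alpha$, and that is precisely the ingredient your chain $W^{\gamma,p}\xrightarrow{D^\alpha_{0+}}W^{\gamma-\alpha,p}\hookrightarrow L^{p^*}$ needs. The paper avoids this entirely by embedding \emph{before} differentiating: when $1<\tfrac1p+\tfrac1q<\gamma+\delta$ it uses the elementary embedding $W^{\gamma,p}(0,T)\hookrightarrow W^{\gamma',q'}(0,T)$ with $q'$ conjugate to $q$ and $\gamma'=\gamma-\tfrac1p+\tfrac1{q'}$ (after first reducing to $1<p,q<\infty$), notes that \eqref{E:condleqone} gives $\gamma'+\delta>1$, and then only needs the $L^{q'}$-boundedness of $D_{0+}^\alpha$ on $W^{\gamma',q'}$, which is exactly the first (conjugate-exponent) case. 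If you reorder your argument this way, the lift property is never needed. Two further technical points you should address: the Hardy inequality of Proposition \ref{prop:boundary-terms} requires $\beta p\neq 1$, so the borderline case $\gamma p=1$ needs the separate reduction the paper gives (pass to $\alpha p<1$ and use $\Vert f_0\Vert_{W^{\alpha,p}}\le c\Vert f_0\Vert_{W^{\gamma,p}}$); and the cases $p=\infty$ or $q=\infty$, or $p=1$ or $q=1$, must be reduced to $1<p,q<\infty$ by slightly shrinking $\gamma$ or $\delta$ before any embedding argument.
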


\begin{remark}\mbox{}\label{remark:continuity}  
\begin{enumerate}
\item[(i)] By (\ref{E:condleqone}) $\gamma p\leq 1$ and $\delta q\leq 1$ can never occur simultaneously. This forces that at least one of the functions $f$ or $g$ has to be continuous, actually H\"older continuous, but only of a very small order. 
A similar observation was made in \cite[Theorem 4.1 and Remark 4.1]{FrizSeeger}, where conditions force either $f$ or $g$ to be H\"older continuous of small order. If both $\gamma p>1$ and $\delta q>1$, then both $f$ and $g$ are continuous and by \cite[Theorem 2]{FrizVictoir} have finite $\frac{1}{\gamma}$- respectively $\frac{1}{\delta}$-variation. In this case the integral above exists as a Young integral and (\ref{eq:integral-bound}) implies the classical Young-Loeve estimate.
\item[(ii)] If $\gamma p<1$ then the right-continuity of $f$ at $0$ is not needed, the boundary terms on the left hand side of (\ref{eq:integral-bound}) can be dropped, and $f_0$ on the right hand side can be replaced by $f$. 
\end{enumerate}
\end{remark}

Combined with Theorem \ref{thm:sobolev-membership} we obtain the following result on the existence of integrals for the compositions $\varphi \circ X$, where $\varphi\in BV(\mathbb{R}^n)$, $X$ is a path, and the composition is understood as in Definition \ref{D:composition}.

\begin{corollary}
\label{cor:composition-integral}
Let $\varphi\in BV(\mathbb{R}^n)$, $s,\theta \in (0,1)$, $1\leq p<+\infty$ and $1\leq q \leq+\infty$. Suppose that 
$X \in W^{\theta,q}(0,T;\mathbb{R}^n)\cap V(\varphi,s,p)$, that $X$ is right-continuous at $0$ and $\varphi$ is continuous at $X_0$. If $g \in W^{\delta,v}(0,T)$ for some $v\geq 1$ and $\delta\in(0,1)$ such that 
\begin{equation}\label{E:oldparam}
s\theta+\delta >1 \quad \text{and}\quad \frac{1}{v}+\frac{1}{p}+\frac{s}{q}<s\theta+\delta, 
\end{equation}
then the integral $\int_0^T \varphi(X_u)dg(u)$ in the sense of (\ref{eq:ZSIntegral})
exists. Moreover, it obeys the estimate 
\begin{multline}\label{E:compoest}
\left|\int_0^T \varphi(X_u)dg(u) - \varphi(X_0)\left(g(T)-g(0)\right)\right| \\
\leq c\:\left(\left\|U^{1-s}\left\|D\varphi\right\|(X_\cdot)\right\|_{L^p(0,T)}[X]_{\theta,q}^s+\left\|\varphi(X_\cdot)-\varphi(X_0)\right\|_{L^p(0,T)} \right) \Vert g_T\Vert_{W^{\delta,q}(0,T)}.
\end{multline}
\end{corollary}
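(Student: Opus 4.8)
The plan is to combine the two main ingredients of Sections~\ref{S:var} and~\ref{S:integral}: Theorem~\ref{thm:sobolev-membership} produces a Sobolev representative of $\varphi\circ X$ together with a quantitative bound, and Theorem~\ref{thm:sobolev-class-integral} then converts this into existence of the generalized Stieltjes integral. First I would fix a Lebesgue representative $\widetilde{\varphi}$ of $\varphi$ which is continuous at $X_0$ (this is what the hypothesis that $\varphi$ be continuous at $X_0$ provides, and it forces $X_0\notin S_\varphi$), and set $f(t):=\widetilde{\varphi}(X_t)$ for $t\in[0,T]$. By Proposition~\ref{P:welldef} the $\mathcal L^1$-class of $f$ on $(0,T)$ is the composition $\varphi\circ X$ of Definition~\ref{D:composition}, independently of the chosen Lebesgue representative. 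Since $X$ is right-continuous at $0$ and $\widetilde{\varphi}$ is continuous at $X_0$, we get $f(t)\to\widetilde{\varphi}(X_0)$ as $t\downarrow 0$, so $f$ is a Borel function that is right-continuous at $0$ with $f(0)=\varphi(X_0)$, and $f_0=f-f(0)=\varphi(X_\cdot)-\varphi(X_0)$. Together with the left-continuity of $g$ at $T$ and $g_T=g-g(T)$ this places $f$ and $g$ in the framework of~\eqref{eq:ZSIntegral}.

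Next I would apply Theorem~\ref{thm:sobolev-membership}(i): for every $\beta<s\theta$ and every $r\ge 1$ with $\frac1p+\frac sq\le\frac1r$ we have $\varphi\circ X\in W^{\beta,r}(0,T)$ and
\[
[\varphi\circ X]_{\beta,r}\le c\,[X]_{\theta,q}^{s}\,\big\Vert U^{1-s}\left\|D\varphi\right\|(X_\cdot)\big\Vert_{L^p(0,T)} .
\]
Since $\frac1r\ge\frac1p+\frac sq\ge\frac1p$ forces $r\le p$, the embedding $L^p(0,T)\hookrightarrow L^r(0,T)$ gives $\Vert\varphi(X_\cdot)-\varphi(X_0)\Vert_{L^r(0,T)}\le c\Vert\varphi(X_\cdot)-\varphi(X_0)\Vert_{L^p(0,T)}$, and adding the two estimates yields a bound for $\Vert f_0\Vert_{W^{\beta,r}(0,T)}=\Vert f_0\Vert_{L^r(0,T)}+[\varphi\circ X]_{\beta,r}$ by precisely the bracketed factor appearing on the right-hand side of~\eqref{E:compoest}. (In the subrange $\frac1p+\frac sq<s\theta$ one is in the situation of Theorem~\ref{thm:sobolev-membership}(ii), $\varphi\circ X$ is then H\"older continuous, and one may work with any sufficiently large finite $r$ instead.)

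It remains to choose $\beta$ and $r$ so that $(f_0,g_T)$ meet the hypotheses of Theorem~\ref{thm:sobolev-class-integral} with smoothness parameters $\beta$ and $\delta$ and integrability exponents $r$ and $v$. Take $\frac1r:=\frac1p+\frac sq$, so $r\ge 1$; the condition $s\theta+\delta>1$ from~\eqref{E:oldparam} lets us pick $\beta<s\theta$ so close to $s\theta$ that $\beta+\delta>1$, and the condition $\frac1v+\frac1p+\frac sq<s\theta+\delta$ then also gives $\frac1r+\frac1v=\frac1p+\frac sq+\frac1v<\beta+\delta$ for $\beta$ close enough to $s\theta$. With these choices Theorem~\ref{thm:sobolev-class-integral}, applied to $f=\varphi\circ X$ and $g$, shows that $\int_0^T\varphi(X_u)\,dg(u)$ in the sense of~\eqref{eq:ZSIntegral} exists and obeys~\eqref{eq:integral-bound}; inserting the bound for $\Vert f_0\Vert_{W^{\beta,r}(0,T)}$ from the previous paragraph then gives~\eqref{E:compoest}. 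The only genuinely delicate point is this last bookkeeping of exponents — making sure that $\beta$ and $r$ can be chosen simultaneously within the ranges required by Theorem~\ref{thm:sobolev-membership} and by Theorem~\ref{thm:sobolev-class-integral} — and the two strict inequalities in~\eqref{E:oldparam} are exactly what supplies the necessary slack; everything else is the elementary limit computation at the left endpoint and the standard Sobolev embeddings on the bounded interval $(0,T)$.
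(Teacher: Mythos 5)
Your argument is correct and is precisely the route the paper intends: the corollary is stated as an immediate combination of Theorem~\ref{thm:sobolev-membership} (which supplies $\varphi\circ X\in W^{\beta,r}(0,T)$ together with the quantitative bound on $[\varphi\circ X]_{\beta,r}$ and on the $L^r$-norm) and Theorem~\ref{thm:sobolev-class-integral} (which converts the Sobolev regularity of integrand and integrator into the existence of the integral \eqref{eq:ZSIntegral} and the estimate \eqref{eq:integral-bound}), and your exponent bookkeeping using the slack in \eqref{E:oldparam} is exactly right. The one point to flag is that your assertion that $\frac{1}{r}=\frac{1}{p}+\frac{s}{q}$ gives $r\geq 1$ is not automatic (it requires $\frac{1}{p}+\frac{s}{q}\leq 1$), but this restriction is implicit in the corollary itself, since otherwise Theorem~\ref{thm:sobolev-membership}(i) admits no admissible $r$ at all.
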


\begin{remark}\mbox{}
\begin{enumerate}
\item[(i)] Suppose that $0=t_0<t_1<...<t_N=T$ is a partition of $[0,T]$ and that $g$ and $\varphi(X_\cdot)$ are continuous at  all $t_k$. Then the above estimate, applied to the intervals $[t_{k-1},t_k]$ in place of $[0,T]$, yields 
\begin{multline}
\sum_{k=1}^N \left|\int_{t_{k-1}}^{t_k} \varphi(X_u)dg(u) - \varphi(X_{t_{k-1}})\left(g(t_k)-g(t_{k-1})\right)\right| \\
\leq c\sum_{k=1}^N\:\left(\left\|U^{1-s}\left\|D\varphi\right\|(X_\cdot)\right\|_{L^p(t_{k-1},t_k)}[X]_{\theta,q}^s+\left\|\varphi(X_\cdot)-\varphi(X_{t_{k-1}})\right\|_{L^p(t_{k-1},t_k)} \right) \times\notag\\
\times \Vert g_{t_k}\Vert_{W^{\delta,q}(t_{k-1},t_k)}.
\end{multline}
It follows that $\int_0^T \varphi(X_u)dg(u)$ exists as a limit of forward Riemann-Stieltjes sums along refining partitions for which $g$ and $\varphi(X_\cdot)$ are continuous at all partition points $t_k$.
\item[(ii)] Although (i) ensures the existence of the integral as a limit of Riemann-Stieltjes sums in along \emph{suitable} partitions, this does not work for sequences of arbitrary partitions: Already for the one-dimensional case $n=1$ and 
the Heaviside function $\varphi(x) = \textbf{1}_{\{x>a\}}$ it is known that $\varphi(X)$ is of infinite $p$-variation for all $p\geq 1$ if (and only if) $X$ crosses the level $a$ infinitely often, \cite[Proposition 5.0.3]{Chen}. In this case the integral does not exists as a Young integral, and one can find sequences of partitions along which the corresponding Riemann sums do not converge. This is in line with \cite[Remark 4.1]{FrizSeeger}, where it was observed that integrals of Heaviside functions do not exists as limits of arbitrary Riemann sums. Because these integrands are discontinuous, this does not contradict well-known embeddings for the case of continuous Sobolev functions, \cite[Theorem 2]{FrizVictoir}. Also the compositions we consider above will in general not be continuous and not be of bounded $p$-variation for any $p$.
\end{enumerate}
\end{remark}

\begin{remark}\label{R:simplyalt}
If $s(\theta-\frac{1}{q})\leq \frac{1}{p}$, then the continuity assumptions on $X$ and $\varphi$ in Corollary \ref{cor:composition-integral} can be dropped and (\ref{E:compoest}) can be simplified.
\end{remark}

In the following examples we silently assume that $X$ and $\varphi$ satisfy the continuity assumptions of Corollary \ref{cor:composition-integral} or, alternatively, that the condition in Remark \ref{R:simplyalt} holds.

\begin{example}
\label{example:previous-results}
Let $g\in W^{\delta,\infty}(0,T)$ with some $\delta>1-s\theta$. Then the integral $\int_0^T \varphi(X_u)dg(u) $ exists, provided that $X \in W^{\theta,q}(0,T;\mathbb{R}^n)\cap V(\varphi,s,p)$, where $\frac{1}{p}+\frac{s}{q}<s\theta+\delta$. Corollary \ref{cor:composition-integral} generalizes \cite[Theorem 2.13 (ii)]{HTV20} in which the case $p=1$, $q=\infty$ was covered. 
\end{example}

\begin{example}
Let $X \in W^{\theta,\infty}(0,T;\mathbb{R}^n)\cap V(\varphi,s,2)$. Then $\varphi \circ X \in W^{\beta,2}(0,T)$ for any $\beta < s\theta$ and consequently the integral $\int_0^T \varphi(X_u)dg(u)$ exists, provided that $g \in W^{\delta,v}(0,T)$ with $\delta>1-s\theta$ and $\frac{1}{v}<s\theta+\delta$. If $\delta$ is as in the first condition, then the second always holds for $v=2$.
\end{example}

\begin{example}
Suppose $X \in W^{\theta,q}(0,T;\mathbb{R}^n)\cap V(\varphi,s,p)$. By Theorem \ref{thm:sobolev-membership} we have
$\varphi \circ X \in W^{\beta,r}(0,T)$ for any $\beta < s\theta$ and $\frac{1}{p}+\frac{s}{q}\leq \frac{1}{r}$. In particular, if $q=\infty$, $p=2$, and $s\theta > \frac12$, then $\varphi \circ X \in W^{\beta,2}(0,T)$ for some $\beta>\frac12$, and hence Theorem \ref{thm:sobolev-class-integral} guarantees the existence of  the integral $\int_0^T \varphi(X_u)d \varphi(X_u)$.
Note that now, by Theorem \ref{thm:sobolev-membership}, we have $\varphi \circ X \in W^{\beta,\infty}(0,T)$ for any $\beta < s\theta - \frac12$. This means that $\varphi \circ X$ is forced to be H\"older continuous, but only of a very small order.
\end{example}

\begin{remark}
Let $F$ be an element of the Sobolev space $W^{1,1}(\mathbb{R}^n)$ and such that $\partial_k F \in BV(\mathbb{R}^n)$ for $k=1,\ldots,n$. One can then follow the proof of \cite[Theorem 2.14.]{HTV20} to obtain
$$
F(X_T) = F(X_0) + \sum_{k=1}^n \int_0^T \partial_k F(X_u)dX_u^k,
$$
provided that for $k=1,\ldots,n$ we have $X\in W^{\theta,q}(0,T;\mathbb{R}^n) \cap V(\partial_k F,s,p)$ for some $\theta > \frac{1}{1+s}$ and $\frac{1}{p} + \frac{1+s}{q}\leq 1$. This generalizes the change of variable formula \cite[Theorem 2.14.]{HTV20} (which covers the case $p=1$, $q=\infty$) and can be used to construct solutions to differential systems involving $BV$-coefficients as in \cite[Section 3]{HTV20}.
\end{remark}

To prove Theorem \ref{thm:sobolev-class-integral} we use the following embedding of Hardy type.
\begin{proposition}
\label{prop:boundary-terms}
Suppose $f:[0,T]\to\mathbb{R}$ is a Borel function satisfying $f\in W^{\beta,p}(0,T)$ for some $\beta \in (0,1)$ and $p\geq 1$ such that $\beta p \neq 1$. Then there exists a constant $c>0$ depending solely on $\beta$, $p$, and $T$, such that 
\begin{equation}
\label{eq:key-boundary}
\int_{0}^T \frac{|f(t)-f(0)|^p}{t^{\beta p}}dt \leq c\left[\int_0^T \int_0^T \frac{|f(t)-f(u)|^p}{|t-u|^{1+\beta p}}\:dudt + \int_0^T |f(t)-f(0)|^p dt\right].
\end{equation}
\end{proposition}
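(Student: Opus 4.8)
The plan is to prove the Hardy-type estimate (\ref{eq:key-boundary}) by a dyadic decomposition near the endpoint $0$. The key idea is that for $t$ in a dyadic annulus $A_j = (2^{-j-1}T, 2^{-j}T]$, the difference $f(t)-f(0)$ can be controlled by averaging the Gagliardo difference quotient over a companion annulus that is comparable in size and bounded away from $t$, so that $|t-u|$ is comparable to $t$; summing the resulting geometric series in $j$ then reproduces the Gagliardo seminorm up to the $L^p$-term. First I would fix $t\in A_j$ and write, for any $u$ in the shifted annulus $\widetilde A_j := (2^{-j}T, 2^{-j+1}T]$ (so that $u$ and $t$ are comparable but $|t-u|\gtrsim 2^{-j}T \gtrsim t$),
\[
|f(t)-f(0)| \le |f(t)-f(u)| + |f(u)-f(0)|,
\]
average this inequality in $u$ over $\widetilde A_j$ (which has measure comparable to $t$), raise to the $p$-th power using Jensen, and divide by $t^{\beta p}$. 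The first term contributes
\[
\frac{1}{t^{\beta p}}\,\fint_{\widetilde A_j}|f(t)-f(u)|^p\,du \;\lesssim\; \int_{\widetilde A_j}\frac{|f(t)-f(u)|^p}{|t-u|^{1+\beta p}}\,du,
\]
since on $A_j\times\widetilde A_j$ we have $t^{\beta p}\,|t-u|\asymp |t-u|^{1+\beta p}$ (all comparable to $(2^{-j}T)^{1+\beta p}$). Integrating over $t\in A_j$ and summing over $j\ge 0$ gives a quantity bounded by the full double Gagliardo integral, since the rectangles $A_j\times\widetilde A_j$ have bounded overlap.

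Next I would handle the remaining "tail" term $\sum_j \int_{A_j} t^{-\beta p}\bigl(\fint_{\widetilde A_j}|f(u)-f(0)|^p\,du\bigr)\,dt$. Here the point is to iterate: set $a_j := \fint_{A_j}|f(u)-f(0)|^p\,du$ and note $\fint_{\widetilde A_j}|f(u)-f(0)|^p\,du = a_{j-1}$ (with $a_{-1}$ interpreted over $(T,2T]\cap[0,T]$, i.e. a boundary annulus we absorb into the $L^p$-term). Since $\int_{A_j} t^{-\beta p}\,dt \asymp (2^{-j}T)^{1-\beta p}$, the tail sum is, up to constants,
\[
\sum_{j\ge 0} (2^{-j}T)^{1-\beta p}\, a_{j-1}.
\]
To close the argument one compares $a_{j-1}$ back to itself via the triangle inequality $|f(u)-f(0)|^p \lesssim |f(u)-f(t')|^p + |f(t')-f(0)|^p$ for $t'\in A_j$, averaging in both variables; this produces a Gagliardo contribution plus $(2^{-j}T)^{1-\beta p} a_j$, and the resulting recursion $b_{j-1}\lesssim G_j + b_j$ with $b_j := (2^{-j}T)^{1-\beta p}a_j$ can be summed because $\sum_j (2^{-j}T)^{1-\beta p}$ is finite exactly when $\beta p>1$. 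The complementary case $\beta p<1$ is in fact easier and more classical: there the weight $t^{-\beta p}$ is integrable near $0$ and one can bound $|f(t)-f(0)|^p$ directly by $\int_0^T |f(t)-f(u)|^p\,du$ on the full interval after a standard averaging, or simply invoke the one-dimensional fractional Hardy inequality; I would treat the two regimes $\beta p<1$ and $\beta p>1$ separately from the outset, which is why the hypothesis $\beta p\neq 1$ appears.

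The main obstacle I anticipate is the bookkeeping in the recursion for the tail term when $\beta p>1$: one must verify that the geometric factor $(2^{-j}T)^{1-\beta p}$, which now \emph{grows} as $j\to\infty$, is compensated by the fact that $a_j\to 0$ and, more precisely, that the accumulated Gagliardo mass $\sum_j G_j$ is finite; this requires summing the recursion from $j=\infty$ downward rather than the other way, and checking that $b_j \to 0$ (which follows from $f\in W^{\beta,p}$ by dominated convergence of the Gagliardo integral over shrinking neighborhoods of the diagonal near $0$). An alternative cleaner route, which I would fall back on if the telescoping gets unwieldy, is to invoke directly the known trace/Hardy inequality for $W^{\beta,p}(0,T)$ (equivalently the boundedness of the trace at $t=0$ combined with the scaling-invariant form of the fractional Hardy inequality on the half-line, valid precisely for $\beta p\neq 1$), and then localize to $(0,T)$ by a cutoff, the error of which is absorbed into the $\int_0^T |f(t)-f(0)|^p\,dt$ term on the right-hand side.
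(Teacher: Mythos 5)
Your ``fallback'' route is in fact exactly what the paper does: its proof of this proposition is a two-line citation, invoking \cite[Lemma 4.33]{HTV20} together with mollification for $\beta p<1$, and the weighted Hardy inequality \cite[Theorem 5.9]{KufnerPersson} for $\beta p>1$. So if you simply execute that fallback you are aligned with the paper. Your primary route, the dyadic decomposition, is fine for $\beta p<1$: there the absorption factor is $2^{\beta p-1}<1$, and after truncating to finitely many annuli one can absorb $\sum_j b_j$ into the left-hand side. (Note the $j=0$ annulus needs no companion: on $(T/2,T]$ the weight $t^{-\beta p}$ is bounded, so that piece goes straight into the $L^p$ term.)

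For $\beta p>1$, however, the argument as sketched has a genuine gap. Writing $a_j:=\fint_{A_j}|f-f(0)|^p$, $b_j:=(2^{-j}T)^{1-\beta p}a_j$ and $G_j$ for the Gagliardo mass of $A_{j-1}\times A_j$, the triangle inequality plus Jensen gives $a_{j-1}\le 2^{p-1}\big(C(2^{-j}T)^{\beta p-1}G_j+a_j\big)$ and hence $b_{j-1}\le C'G_j+2^{p(1-\beta)}b_j$. The factor $2^{p(1-\beta)}$ is strictly larger than $1$ (it combines the weight ratio $2^{1-\beta p}$ with the $2^{p-1}$ from the elementary inequality), so iterating downward produces exponentially growing coefficients $2^{p(1-\beta)(k-j)}$ in front of $G_k$, and even in the borderline case of factor exactly $1$ the telescoping only yields $\sum_j b_j\le\sum_k k\,G_k$, which is not controlled by the Gagliardo seminorm. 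Your parenthetical claim that $\sum_j(2^{-j}T)^{1-\beta p}$ converges exactly when $\beta p>1$ is also backwards: it converges iff $\beta p<1$, which is a symptom of the same confusion. The scheme can be repaired, but it needs an extra idea: either replace the ratio $2$ by a ratio $\lambda$ so large that $\lambda^{1-\beta p}2^{p-1}<1$ (possible precisely because $\beta p>1$), or telescope $f(0)-\fint_{A_j}f$ as a series of consecutive dyadic averages and sum it via the discrete Hardy/H\"older trick exploiting $\beta-\frac1p>0$. Finally, your step ``$b_j\to0$ by dominated convergence'' silently uses that $f(0)$ coincides with the value at $0$ of the continuous (Morrey) representative; for an arbitrary Borel representative this fails, and so does the inequality itself (take $f=\mathbf 1_{\{0\}}$, for which the left side is infinite and the right side finite when $\beta p>1$) --- this is consistent with the right-continuity at $0$ imposed where the proposition is applied, but it is a hypothesis your proof must invoke explicitly.
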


\begin{proof}
In the case $\beta p<1$ inequality (\ref{eq:key-boundary}) follows from \cite[Lemma 4.33]{HTV20} and smooth approximation by mollification. In the case $\beta p>1$ inequality (\ref{eq:key-boundary}) follows using \cite[Theorem 5.9]{KufnerPersson}.
\end{proof}

We prove Theorem \ref{thm:sobolev-class-integral}.
\begin{proof}
Assume first that 
\begin{equation}\label{E:condsimple}
\frac{1}{p}+\frac{1}{q} \leq 1.
\end{equation}
Let $\alpha \in (1-\theta,\gamma)$. By \eqref{eq:ZSIntegral} we have 
\[\left|\int_0^T f(u)dg(u) - f(0)(g(T)-g(0))\right| = \left|\int_0^T D^{\alpha}_{0+} f_0(t) \, D^{1-\alpha}_{T-} g_T(t)dt\right|,
\]
and by H\"older's inequality it suffices to show that
\begin{equation}
\label{eq:derivative-norm}
\int_0^T \left\vert D_{0+}^{\alpha}f_0(t)\right\vert^p dt \leq C\Vert f_0\Vert_{W^{\gamma,p}(0,T)}^p\quad \text{and}\quad \int_0^T \left\vert D_{T-}^{1-\alpha}g_T(t)\right\vert^q dt \leq C\Vert g_T\Vert^q_{W^{\theta,q}(0,T)}.
\end{equation}
By Lemma \ref{lemma:gagliardo-embedding} we have 
\begin{equation*}
\begin{split}
\int_0^T \left|D_{0+}^\alpha f_{0}\right|^p dt &\leq 2^{p-1}\left[ \int_0^T \frac{|f(t)-f(0)|^p}{t^{\alpha p}} dt + \int_0^T \left(\int_{0}^T 
\frac{|f(t)-f(\tau)|}{|t-\tau|^{1+\alpha}}d\tau\right)^p dt \right]\\
&\leq c\left[ \int_0^T \frac{|f(t)-f(0)|^p}{t^{\alpha p}} dt + \int_0^T \int_{0}^T 
\frac{|f(t)-f(\tau)|^p}{|t-\tau|^{1+\gamma p}}d\tau dt \right].
\end{split}
\end{equation*}
If $\gamma p \neq 1$, then by Proposition \ref{prop:boundary-terms} we have 
\[\int_0^T \frac{|f(t)-f(0)|^p}{t^{\alpha p}} dt \leq T^{(\gamma-\alpha)p}\int_0^T \frac{|f(t)-f(0)|^p}{t^{\gamma p}} dt\leq c\:\Vert f_0\Vert_{W^{\gamma,p}(0,T)}^p.\]
If $\gamma p =1$, then $\alpha p < 1$ and we have $\Vert f_0 \Vert_{W^{\alpha,p}(0,T)} \leq c\Vert f_0\Vert_{W^{\gamma,p}(0,T)}$,  \cite[Proposition 2.1]{DiNezza}, and using Proposition \ref{prop:boundary-terms} with $\alpha$ in place of $\beta$, we obtain
\[\int_0^T \frac{|f(t)-f(0)|^p}{t^{\alpha p}} dt \leq c\Vert f_0\Vert_{W^{\alpha,p}(0,T)} \leq c\Vert f_0\Vert_{W^{\gamma,p}(0,T)}.\]
This shows the first inequality in (\ref{eq:derivative-norm}), the second follows by similar arguments.

Now suppose that 
\begin{equation}\label{E:newcond}
1< \frac{1}{p}+\frac{1}{q} <\gamma+\delta.
\end{equation}
By the elementary continuous embedding of $W^{\gamma,\infty}(0,T)$ into $W^{\gamma',r}(0,T)$, valid for any $0<\gamma'<\gamma<1$ and $1\leq r<+\infty$, we can make $\gamma$ and $\delta$ slightly smaller if necessary and assume that both $p$ and $q$ are finite. Using the fact that any element of $W^{\gamma,p}(0,T)$ can be continued to an element of $W^{\gamma,p}(\mathbb{R})$, together with well-known norm equivalences, \cite[Section 2.6.1]{Triebel92}, and embeddings, \cite[2.7.1 Theorem]{Triebel83}, we may also assume that $p>1$ and $q>1$, again to the cost of making $\gamma$ or $\delta$ a bit smaller if necessary. Now let $q'$ be such that $\frac{1}{q}+\frac{1}{q'}=1$. Then by the left inequality in (\ref{E:newcond}) we have $p<q'$, and by the results just cited the embedding of $W^{\gamma,p}(0,T)$ into $W^{\gamma',q'}(0,T)$ with $\gamma'=\gamma-\frac{1}{p}+\frac{1}{q'}$ is continuous. So if $f_0\in W^{\gamma,p}(0,T)$, the result follows from the first part of the proof with $\gamma'$ and $q'$ in place of $\gamma$ and $p$, note that by the right inequality in (\ref{E:newcond}) we have $\gamma'+\delta>1$.

\end{proof}

\section{Berman's inequality revisited}\label{S:Berman}

In Proposition \ref{prop:key-estimate} and in \cite[Proposition 4.28]{HTV20} we used both the regularity of $X$ and the regularity of $\mu_X^I$. In this auxiliary section, which is independent of the preceding sections, we briefly revisit a result due Berman \cite{Berman69} that provides a quantitative limitation for the possible simultaneous regularity of a path $X$ and its occupation measure. In particular for low space dimensions this limitation is not too severe. Natural and far reaching dimension-free results are provided in \cite[Theorem 31 and Section 5.2]{GG20b}.

For a function $X:I\to\mathbb{R}$ with square integrable local times on a bounded interval $I\subset \mathbb{R}$ Berman proved in \cite[Lemma 3.1]{Berman69} a lower estimate for the diameter of the range $X(I)$ in terms of the $\dot{L}^2_\alpha(\mathbb{R}^n)$-norm of its local time with some $\alpha\geq 0$. He then used this estimate to show that if, very roughly speaking, this norm decays fast enough as the interval $I$ is made smaller and smaller and the function $X$ itself is H\"older continuous on $I$ of order $\gamma\in (0,1)$, then we must have 
\begin{equation}\label{E:restrict}
\gamma<\frac{1}{2\alpha+1}.
\end{equation}
For $\alpha>0$, and in particular for the case $\alpha\geq 1$ of weakly differentiable local times, (\ref{E:restrict}) restricts the possible range of $\gamma$. However, for $\alpha=0$ condition (\ref{E:restrict}) is no restriction on $\gamma\in (0,1)$. The same is true for the case that the occupation measures have no densities, but themselves are elements of a Sobolev space $\dot{L}^2_\alpha(\mathbb{R}^n)$ of suitable negative order $\alpha< 0$. This case had not been considered in \cite[Lemma 3.1]{Berman69}, although the mechanism allows it and the result is very natural. Before stating Berman's inequality for occupation measures in Corollary \ref{C:Berman} below, we provide a corresponding inequality for general finite Borel measures.
\begin{theorem}\label{T:Berman}
Let 
\begin{equation}\label{E:alpharange}
1<p\leq  +\infty\quad \text{and}\quad -\frac{n}{p} < \alpha<n-\frac{n}{p}.
\end{equation}
Then there is a constant $c>0$, depending only on $n$, $p$ and $\alpha$, such that for any finite Borel measure $\mu$ on $\mathbb{R}^n$, any $x\in \mathbb{R}^n$ and any $r>0$ we have 
\begin{equation}\label{E:BermanPoincare}
\mu(B(x,r))\leq c\:r^{\alpha+\frac{n}{p}}\left\||\xi|^{\alpha}\hat{\mu}\right\|_{L^p(\mathbb{R}^n)}.
\end{equation}
\end{theorem}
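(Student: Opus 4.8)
The plan is to reduce the estimate \eqref{E:BermanPoincare} to a statement about convolution with a suitably chosen smooth bump. Fix $x\in\mathbb{R}^n$ and $r>0$. Choose a nonnegative Schwartz function $\psi\in\mathcal{S}(\mathbb{R}^n)$ with $\psi\geq \mathbf{1}_{B(0,1)}$, and let $\psi_r(y):=\psi(y/r)$, so that $\psi_r\geq \mathbf{1}_{B(0,r)}$ and, after a translation, $\psi_r(\cdot-x)\geq \mathbf{1}_{B(x,r)}$. Then, since $\mu$ is a nonnegative finite measure,
\[
\mu(B(x,r))\;\leq\;\int_{\mathbb{R}^n}\psi_r(y-x)\,\mu(dy)\;=\;(\psi_r*\mu)(x)\;=\;\bigl((\widehat{\psi_r}\,\widehat{\mu})^\vee\bigr)(x),
\]
where the last equality is the standard identity for the convolution of a Schwartz function and a tempered distribution. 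The strategy is now to bound this quantity pointwise by inserting the weight $|\xi|^{\alpha}$ and its reciprocal and applying H\"older's inequality on the Fourier side.

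The main step is the following pointwise estimate: writing $p'$ for the conjugate exponent of $p$,
\[
\bigl|(\widehat{\psi_r}\,\widehat{\mu})^\vee(x)\bigr|
\;\leq\;\frac{1}{(2\pi)^{n/2}}\int_{\mathbb{R}^n}\bigl|\widehat{\psi_r}(\xi)\bigr|\,|\xi|^{-\alpha}\cdot|\xi|^{\alpha}|\widehat{\mu}(\xi)|\,d\xi
\;\leq\;\frac{1}{(2\pi)^{n/2}}\,\bigl\||\xi|^{-\alpha}\widehat{\psi_r}\bigr\|_{L^{p'}(\mathbb{R}^n)}\,\bigl\||\xi|^{\alpha}\widehat{\mu}\bigr\|_{L^{p}(\mathbb{R}^n)}.
\]
It remains to compute the scaling of the first factor. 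Since $\widehat{\psi_r}(\xi)=r^n\widehat{\psi}(r\xi)$, the substitution $\eta=r\xi$ gives
\[
\bigl\||\xi|^{-\alpha}\widehat{\psi_r}\bigr\|_{L^{p'}(\mathbb{R}^n)}^{p'}
= r^{np'}\int_{\mathbb{R}^n}|\xi|^{-\alpha p'}|\widehat{\psi}(r\xi)|^{p'}\,d\xi
= r^{np'-n+\alpha p'}\int_{\mathbb{R}^n}|\eta|^{-\alpha p'}|\widehat{\psi}(\eta)|^{p'}\,d\eta,
\]
so that $\bigl\||\xi|^{-\alpha}\widehat{\psi_r}\bigr\|_{L^{p'}}=c_{n,p,\alpha}\,r^{\alpha+n/p}$, with the constant $c_{n,p,\alpha}$ finite precisely under the hypotheses \eqref{E:alpharange}: near the origin $|\eta|^{-\alpha p'}$ is locally integrable iff $\alpha p'<n$, i.e. $\alpha<n-n/p$ (when $\alpha\geq 0$; for $\alpha<0$ there is no singularity), and at infinity the rapid decay of $\widehat{\psi}$ absorbs any polynomial growth, while the constraint $\alpha>-n/p$ guarantees the exponent $\alpha+n/p$ is positive and keeps the relevant integrals against $|\widehat\psi|^{p'}$ convergent; the endpoint $p=\infty$ (so $p'=1$) is handled identically, using $\||\xi|^{-\alpha}\widehat{\psi_r}\|_{L^1}=c\,r^{\alpha}$. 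Combining the three displays yields \eqref{E:BermanPoincare}.

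I expect the only genuine obstacle to be the bookkeeping of exponents and the verification that $c_{n,p,\alpha}<+\infty$ exactly on the range \eqref{E:alpharange}; everything else is a routine H\"older-plus-scaling argument. One should double-check the case $\alpha<0$ separately, where $|\xi|^{-\alpha}$ vanishes at the origin rather than blowing up, so the integrability near $0$ is automatic and only the behaviour at infinity (controlled by $\widehat\psi\in\mathcal{S}$) and the sign of $\alpha+n/p$ matter; and the case $p=\infty$, where $\widehat\mu\in L^\infty$ follows from finiteness of $\mu$ and the pairing is with $|\xi|^{-\alpha}\widehat{\psi_r}\in L^1$. The choice of $\psi$ with $\psi\geq\mathbf 1_{B(0,1)}$ is harmless: one can take, e.g., a fixed Gaussian times a constant, or any nonnegative Schwartz function bounded below by $1$ on the unit ball.
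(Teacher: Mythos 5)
Your argument is exactly the paper's proof: test $\mu$ against a scaled nonnegative bump dominating $\mathbf 1_{B(x,r)}$, pass to the Fourier side, apply H\"older with the weights $|\xi|^{\mp\alpha}$, and extract the factor $r^{\alpha+n/p}$ by the scaling $\widehat{\psi_r}(\xi)=r^n\widehat\psi(r\xi)$. One small inaccuracy: the lower bound $\alpha>-n/p$ plays no role in the convergence of $\int|\eta|^{-\alpha p'}|\widehat\psi(\eta)|^{p'}d\eta$ (rapid decay of $\widehat\psi$ handles any polynomial growth at infinity, and there is no singularity at the origin for $\alpha<0$); it only excludes the regime where the right-hand side of \eqref{E:BermanPoincare} is automatically infinite, so the estimate there is trivial anyway.
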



\begin{remark}\label{R:trivial} For nonzero $\mu$ we have $\left\||\xi|^{\alpha}\hat{\mu}\right\|_{L^p(\mathbb{R}^n)}=+\infty$ if $1<p<+\infty$ and $\alpha\leq -\frac{n}{p}$ respectively $p=+\infty$ and $\alpha<0$. Consequently (\ref{E:BermanPoincare}) trivially holds for such $p$ and $\alpha$. For $p=+\infty$ and $\alpha=0$ it is trivial with $c=(2\pi)^{n/2}$. 
\end{remark}

Theorem \ref{T:Berman} follows by similar arguments as used in \cite[p. 273-274]{Berman69}.
\begin{proof}
Since the right hand side of (\ref{E:BermanPoincare}) does not depend on $x$, we may assume that $x=0$. Let $\psi\in C_c^\infty(\mathbb{R}^n)$ be nonnegative, with $\supp\psi\subset B(0,2)$ and such that $\psi(x)= 1$ if $|x|\leq 1$ and set $\psi_r(x):=\psi(\frac{x}{r})$. Since $\mu\in \mathcal{S}'(\mathbb{R}^n)$ and $\psi_r\in \mathcal{S}(\mathbb{R}^n)$, we have 
\[\mu(B(0,r))\leq\int_{\mathbb{R}^n}\psi_r\:d\mu=\int_{\mathbb{R}^n}\hat{\psi}_r(\xi)\hat{\mu}(\xi)\:d\xi
\leq \left(\int_{\mathbb{R}^n}|\xi|^{-\alpha p'}\big|\hat{\psi}_r(\xi)\big|^{p'} d\xi\right)^{\frac{1}{p'}}\left\||\xi|^{\alpha}\hat{\mu}\right\|_{L^p(\mathbb{R}^n)},\]
where $\frac{1}{p}+\frac{1}{p'}=1$; the obvious modifications apply for the case $p=+\infty$. Substituting $\eta=r\xi$ and using $\hat{\psi}_r(\xi)=r^n\hat{\psi}(r\xi)$, the first factor on the right hand side is seen to be $r^{\alpha+\frac{n}{p}}\left(\int_{\mathbb{R}^n}|\eta|^{-\alpha p'}|\hat{\psi}|^{p'}\:d\eta\right)^{1/p'}$. Since this integral converges, the result follows.
\end{proof}

\begin{remark}\label{R:commentsBerman}\mbox{}
\begin{enumerate}
\item[(i)] For $p=2$ and $\alpha=1$ inequality (\ref{E:BermanPoincare}) basically follows from Cauchy-Schwarz, Poincar\'e's inequality and Plancherel's theorem, even if $n=1,2$: Suppose that $\mu$ is absolutely continuous with density $f$ having compact support inside $B(x,r)$. If $\left\||\xi|\hat{\mu}\right\|_{L^2(\mathbb{R}^n)}$ is finite, then it equals $\left\||\nabla f|\right\|_{L^2(\mathbb{R}^n)}$ and
\[\mu(B(x,r))\leq \mathcal{L}^n(B(x,r))^{1/2}\left\|f\right\|_{L^2(B(x,r))}\leq c\:r^{1+\frac{n}{2}}\:\left\||\xi|\hat{\mu}\right\|_{L^2(\mathbb{R}^n)};\]
if it is not finite, the inequality is true anyway. 
\item[(ii)] Suppose that $1<p<+\infty$ and $-\frac{n}{p}<\alpha<0$. Then for $\psi$ as in the proof we have $|\xi|^{-\alpha}\hat{\psi}\in \mathcal{S}(\mathbb{R}^n)$, and we obtain the well-known duality estimate
\begin{multline}\label{E:HY}
\mu(B(0,r))\leq\int_{\mathbb{R}^n}\psi_r\:d\mu=\int_{\mathbb{R}^n}\left(|\xi|^{-\alpha}\hat{\psi}\right)^\vee(x) \left(|\xi|^{\alpha}\hat{\mu}\right)^\vee(x) dx\\
\leq \left\|\psi_r\right\|_{\dot{L}_{-\alpha}^p(\mathbb{R}^n)}\left\|\mu\right\|_{\dot{L}_{\alpha}^{p'}(\mathbb{R}^n)}=c_\psi\:r^{\alpha+\frac{n}{p}}\left\|\mu\right\|_{\dot{L}_{\alpha}^{q'}(\mathbb{R}^n)},
\end{multline}
where $c_\psi=\left\|\psi\right\|_{\dot{L}_{-\alpha}^p(\mathbb{R}^n)}$ and $\frac{1}{p}+\frac{1}{p'}=1$. By translation invariance $B(0,r)$ can again be replaced by $B(x,r)$. For $1< p\leq 2$ we have
$\left\|\mu\right\|_{\dot{L}_\alpha^{p'}(\mathbb{R}^n)}\leq \left\||\xi|^{\alpha}\hat{\mu}\right\|_{L^p(\mathbb{R}^n)}$ by 
Hausdorff-Young, so that (\ref{E:BermanPoincare}) is implied by (\ref{E:HY}); this gives an alternative proof of (\ref{E:BermanPoincare}) for $p$ and $\alpha$ as stated. If $2\leq p<+\infty$, then $\left\||\xi|^{\alpha}\hat{\mu}\right\|_{L^p(\mathbb{R}^n)}\leq \left\|\mu\right\|_{\dot{L}_\alpha^{p'}(\mathbb{R}^n)}$, in this case (\ref{E:BermanPoincare}) implies (\ref{E:HY}). Moreover, (\ref{E:HY}) remains valid if we allow $\psi(x)\geq 1$ for $|x|\leq 1$, and taking the infimum over all such $\psi$, we arrive at the known inequality
\[\mu(B(x,r))\leq \dot{C}_{-\alpha,p}(B(0,1))^{\frac{1}{p}}\:r^{\alpha+\frac{n}{p}}\left\|\mu\right\|_{\dot{L}_{\alpha}^{p'}(\mathbb{R}^n)},\]
where $\dot{C}_{-\alpha,p}$ denotes the \emph{$(-\alpha,p)$-Riesz capacity}, see \cite[Definition 2.2.6]{AH96}. We point out that for $p$, $\alpha$ and $q$ as stated the identity
\begin{equation}\label{E:dualdefcap}
\dot{C}_{-\alpha,p}(A)=\sup\left\lbrace \frac{\nu(A)^p}{(I_{p'}^{-\alpha}(\nu))^{p/p'}}:\ \text{$\nu$ nonnegative Radon measure on $A$}\right\rbrace
\end{equation}
holds for any $A\subset \mathbb{R}^n$ Borel,  \cite[Theorems 2.2.7 and 2.5.1]{AH96}.
\end{enumerate}
\end{remark}

Now let $I$ be a bounded interval and $X:I\to \mathbb{R}^n$ a bounded path. The following Corollary \ref{C:Berman} is a low-$\alpha$-version of Berman's inequality from \cite[Lemma 3.1]{Berman69} for $\mathbb{R}^n$-valued functions $X$. It follows from Theorem \ref{T:Berman} with $\mu=\mu_X^I$ and $r=\diam(X(I))$.

\begin{corollary}\label{C:Berman}
Let $p$ and $\alpha$ be as in (\ref{E:alpharange}). Then there is a constant $K>0$, depending only on $n$, $p$ and $\alpha$, such that 
\begin{equation}\label{E:Bermangeneral}
\diam(X(I))^{\alpha+\frac{n}{p}}\geq \frac{K\:\mathcal{L}^1(I)}{\ \left\||\xi|^{\alpha}\hat{\mu}_X^I\right\|_{L^p(\mathbb{R}^n)}}.
\end{equation}
\end{corollary}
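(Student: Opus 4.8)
This is a direct consequence of Theorem \ref{T:Berman}, and the plan is simply to feed the occupation measure into \eqref{E:BermanPoincare}. First I would record that, since $I$ is bounded, $\mu_X^I$ is a finite Borel measure on $\mathbb{R}^n$, so Theorem \ref{T:Berman} indeed applies to it. Next, fixing an arbitrary $t_0\in I$ and writing $x_0:=X_{t_0}$ and $\rho:=\diam(X(I))$, the definition of the diameter gives $|X_t-x_0|\le\rho$ for all $t\in I$, whence $X(I)\subset\overline{B(x_0,\rho)}$. Therefore, for every $r>\rho$ one has $X^{-1}(B(x_0,r))\cap I=I$, and \eqref{E:occumeasure} yields
\[\mu_X^I(B(x_0,r))=\mathcal{L}^1\big(X^{-1}(B(x_0,r))\cap I\big)=\mathcal{L}^1(I).\]

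The second step is to invoke \eqref{E:BermanPoincare} with $\mu=\mu_X^I$, the point $x_0$ and radius $r>\rho$: this produces a constant $c>0$, depending only on $n$, $p$ and $\alpha$, with
\[\mathcal{L}^1(I)=\mu_X^I(B(x_0,r))\le c\:r^{\alpha+\frac{n}{p}}\left\||\xi|^{\alpha}\hat{\mu}_X^I\right\|_{L^p(\mathbb{R}^n)}.\]
Since \eqref{E:alpharange} forces $\alpha+\frac{n}{p}>0$, the map $r\mapsto r^{\alpha+\frac{n}{p}}$ is continuous, so letting $r\downarrow\rho$ gives $\mathcal{L}^1(I)\le c\,\diam(X(I))^{\alpha+\frac{n}{p}}\left\||\xi|^{\alpha}\hat{\mu}_X^I\right\|_{L^p(\mathbb{R}^n)}$. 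Rearranging with $K:=c^{-1}$ yields \eqref{E:Bermangeneral}; in the degenerate situations $\mathcal{L}^1(I)=0$ or $\left\||\xi|^{\alpha}\hat{\mu}_X^I\right\|_{L^p(\mathbb{R}^n)}=+\infty$ the claimed inequality is trivial (note $0^{\alpha+n/p}=0$ because the exponent is positive).

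Since the argument is a mechanical specialization of Theorem \ref{T:Berman}, there is no genuine obstacle. The only small subtlety is that Theorem \ref{T:Berman} is stated for \emph{open} balls, so one cannot directly plug in $r=\diam(X(I))$; this is why the proof takes $r>\rho$ and then passes to the limit, which is harmless precisely because $\alpha+\frac{n}{p}>0$.
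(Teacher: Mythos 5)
Your proof is correct and follows exactly the route the paper indicates: apply Theorem \ref{T:Berman} to $\mu=\mu_X^I$ at a point of $X(I)$ with radius (essentially) $\diam(X(I))$, using that the occupation measure of such a ball is $\mathcal{L}^1(I)$. The extra care about open balls and the limit $r\downarrow\diam(X(I))$ is a harmless refinement of the same argument.
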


\begin{remark}\mbox{}\label{R:Bermangeneral}
\begin{enumerate}
\item[(i)] The original version \cite[Lemma 3.1]{Berman69} considered the case $n=1$, $p=2$, $\alpha\geq 0$, but was able to allow arbitrarily large $\alpha$ by an integration by parts argument. We are mainly interested in small $\alpha$ and therefore accept the upper bound on $\alpha$.
\item[(ii)] For $p=2$ and $-\frac{n}{2}<\alpha<0$ the square of the right hand side in (\ref{E:Bermangeneral}) is the inverse $(-\alpha,2)$-energy (up to the constant $K^2$) of the normalized occupation measure; this is similar as in Remark \ref{R:commentsBerman} (ii), see also Remark \ref{R:Hausdorffdim} below.
\item[(iii)] Corollary \ref{C:Berman} is clearly linked with the results in \cite{GG20b}: Given $\gamma,\varrho>0$, a path $X:I\to \mathbb{R}^n$ is called \emph{$(\gamma,\varrho)$-irregular} if there is a constant $c>0$ such that 
$\left\||\xi|^\varrho \hat{\mu}_X^J\right\|_{L^\infty(\mathbb{R}^n)}\leq c\:\mathcal{L}^1(J)^\gamma$ for any subinterval $J$ of $I$, see \cite[Definition 1.3]{CatellierGubinelli} or \cite[Definition 2]{GG20b}. If $0<\gamma\leq 1$, $0<\varrho<n$ and $X$ is $(\gamma,\varrho)$-irregular, then Corollary \ref{C:Berman} with $p=+\infty$ and $\alpha=\varrho$ yields $\diam(X(J))\geq (K/c)^{1/\varrho}\:\mathcal{L}^1(J)^{(1-\gamma)/\varrho}$
for each subinterval $J$; this is in line with \cite{GG20b}, see for instance Theorem 31 there.
\end{enumerate}
\end{remark}

For any $p>0$ let $V_p(X,I):=\lim_{\delta\to 0}\sup_{\Delta(\mathcal{P})\leq\delta}\sum_{i=1}^N |X(t_i)-X(t_{i-1})|^p$,
the supremum taken over all partitions $\mathcal{P}$ of $I$ of form $a=t_0<t_1<...<t_N=b$ and with mesh $\Delta(\mathcal{P}):=\max_{1\leq i\leq N}|t_i-t_{i-1}|$ smaller than $\delta$. To $V_p(X,I)$ one sometimes refers as the \emph{limiting (strong) $p$-variation of $X$ on $I$}, see \cite[Section 4]{TaylorTricot} or \cite{Taylor}. It should not be confused with the $p$-variation, which in general is larger, \cite[p. 4]{LLC}. If $p\geq 1$ and $X$ is H\"older continuous of order $\frac{1}{p}$ on $I$, then obviously $V_p(X,I)$ is finite.

Given $1<p\leq +\infty$, $-\infty<\alpha<n-\frac{n}{p}$ and a subinterval $J$ of $I$ with nonempty interior, we set 
\[\tau_{p,\alpha}(X,J):=\frac{\mathcal{L}^1(J)}{\big\||\xi|^\alpha \hat{\mu}_X^{J}\big\|_{L^p(\mathbb{R}^n)}}.\]
By Remark \ref{R:trivial} we have $\tau_{p,\alpha}(X,J)=0$ if $p<+\infty$ and $\alpha\leq -\frac{n}{p}$ or if $p=+\infty$ and $\alpha<0$. For $1<p<+\infty$ and $-\frac{n}{p}<\alpha<0$ we also consider 
\[\sigma_{p,\alpha}(X,J):=\frac{\mathcal{L}^1(J)}{\big\|\mu_X^{J}\big\|_{\dot{L}_\alpha^{p'}(\mathbb{R}^n)}},\]
where $\frac{1}{p}+\frac{1}{p'}=1$. For $p=2$ we have $\sigma_{2,\alpha}=\tau_{2,\alpha}$, and we can consistently define 
$\sigma_{2,\alpha}$ by this identity for all $-\infty<\alpha<\frac{n}{2}$.

\begin{remark}\label{R:Hausdorffdim}
If $\sigma_{p,\alpha}(X,J)>0$, then $\mu_X^J$ is diffuse enough to have finite $(-\alpha,p')$-energy. In this case $\dim_H X(J)\geq \alpha p+n$, where $\dim_H$ denotes the Hausdorff dimension, as can be seen using standard arguments, \cite[Theorem 8.5]{Mattila}. Note that $\sigma_{p,\alpha}(X,J)\leq \dot{C}_{-\alpha,p}(X(J))$ by (\ref{E:dualdefcap}).
\end{remark}

The following are variants of statements from \cite{Berman69}.
\begin{corollary}\label{C:restrict}
Let $p$ and $\alpha$ be as in (\ref{E:alpharange}) and $q>0$.
\begin{enumerate}
\item[(i)] If $I_1,...,I_M$ are subintervals of $I$ with nonempty interior, then 
\begin{equation}\label{E:prevariation}
\sum_{k=1}^M \diam(X(I_k))^{(\alpha+\frac{n}{p})q}\geq K^q\:\sum_{k=1}^{M} \tau_{p,\alpha}(X,I_k)^q \geq \frac{K^qM^2}{\sum_{k=1}^M \tau_{p,\alpha}(X,I_k)^{-q}}.
\end{equation}
\item[(ii)] Suppose that $(\delta_m)_m$ is a sequence of numbers $\delta_m>0$ converging to zero as $m\to \infty$ and that $I_{m,1},...,I_{m,M_m}$ are subintervals of $I$ of length $0<\mathcal{L}^1(I_i)\leq \delta_m$ and with pairwise disjoint interior. If 
\[\liminf_{m\to \infty} M_m^{-2} \sum_{k=1}^{M_m} \tau_{p,\alpha}(X,I_{m,k})^{-q} =0,\]
then $X$ has infinite limiting $(\alpha +\frac{n}{p})q$-variation $V_{(\alpha +\frac{n}{p})q}(X,I)=+\infty$ and in particular, cannot be H\"older continuous on $I$ of order $((\alpha +\frac{n}{p})q)^{-1}$.
\end{enumerate}
If $1<p<+\infty$ and $-\frac{n}{p}<\alpha<0$, then $\tau_{p,\alpha}(X,\cdot)$ can be replaced by $\sigma_{p,\alpha}(X,\cdot)$.
\end{corollary}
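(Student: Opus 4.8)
The plan is to reduce everything to Corollary \ref{C:Berman} (or, for the $\sigma_{p,\alpha}$-version, to the duality estimate \eqref{E:HY} of Remark \ref{R:commentsBerman}(ii)), together with two elementary inequalities: the Cauchy--Schwarz inequality for part (i), and a comparison between strong-variation sums and diameters of images of small subintervals for part (ii).

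For (i), I would apply Corollary \ref{C:Berman} with $I$ replaced by each subinterval $I_k$ (note that $X$ restricted to $I_k$ is again a bounded path on a bounded interval and $\mu_X^{I_k}$ is its occupation measure), obtaining $\diam(X(I_k))^{\alpha+\frac{n}{p}}\geq K\,\tau_{p,\alpha}(X,I_k)$. Raising to the power $q>0$ and summing over $k=1,\dots,M$ gives the first inequality in \eqref{E:prevariation}. For the second, put $a_k:=\tau_{p,\alpha}(X,I_k)^q\in[0,+\infty)$; if some $a_k=0$ then the right-hand side of \eqref{E:prevariation} vanishes and there is nothing to prove, and otherwise the Cauchy--Schwarz inequality applied to $M=\sum_k a_k^{1/2}a_k^{-1/2}$ yields $M^2\leq\big(\sum_k a_k\big)\big(\sum_k a_k^{-1}\big)$, which rearranges to the claim. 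When $1<p<+\infty$ and $-\frac{n}{p}<\alpha<0$ I would run the same argument with \eqref{E:HY} in place of Corollary \ref{C:Berman}: this gives $\diam(X(I_k))^{\alpha+\frac{n}{p}}\geq c\,\sigma_{p,\alpha}(X,I_k)$ (with $c$ depending only on $n,p,\alpha$), hence the $\sigma_{p,\alpha}$-version of \eqref{E:prevariation}, which in turn feeds the $\sigma_{p,\alpha}$-version of (ii) verbatim.

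For (ii), set $\rho:=(\alpha+\tfrac{n}{p})q$. The hypothesis $\liminf_m M_m^{-2}\sum_{k=1}^{M_m}\tau_{p,\alpha}(X,I_{m,k})^{-q}=0$ is exactly $\limsup_m M_m^2\big/\sum_{k=1}^{M_m}\tau_{p,\alpha}(X,I_{m,k})^{-q}=+\infty$, so the second inequality in \eqref{E:prevariation} applied to the family $I_{m,1},\dots,I_{m,M_m}$ gives $\limsup_m\sum_{k}\diam(X(I_{m,k}))^{\rho}=+\infty$. It then remains to prove $V_\rho(X,I)\geq\limsup_m\sum_k\diam(X(I_{m,k}))^{\rho}$. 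Fix $\delta>0$ and any $m$ with $\delta_m\leq\delta$; for $\epsilon>0$ choose, for each $k$, points $s_k,t_k\in I_{m,k}$ with $|X(s_k)-X(t_k)|\geq\diam(X(I_{m,k}))-\epsilon$ (possible, since $\diam(X(I_{m,k}))$ is a supremum even for discontinuous $X$). Form a partition $\mathcal{P}$ of $I$ containing $a$, $b$, the endpoints of all $I_{m,k}$, all $s_k,t_k$, and finitely many extra points placed in $I\setminus\bigcup_k(\text{interior of }I_{m,k})$ so that $\Delta(\mathcal{P})\leq\delta$; no extra points are needed inside any $I_{m,k}$ because $\mathcal{L}^1(I_{m,k})\leq\delta_m\leq\delta$. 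Since the $I_{m,k}$ have pairwise disjoint interiors, for each $k$ with $s_k\neq t_k$ the segment between $s_k$ and $t_k$ is a single interval of $\mathcal{P}$; discarding all other (nonnegative) terms leaves $\sum_{\mathcal{P}}|\Delta X|^\rho\geq\sum_{k:\,s_k\neq t_k}|X(s_k)-X(t_k)|^\rho\geq\sum_k(\diam(X(I_{m,k}))-\epsilon)^\rho$ once $\epsilon$ is small enough that $\epsilon<\diam(X(I_{m,k}))$ for every $k$ with $\diam(X(I_{m,k}))>0$ (there are only finitely many $k$). Letting $\epsilon\to0$, then taking the supremum over partitions of mesh $\leq\delta$, then $\delta\to0$ (using $\delta_m\to0$, so every index is eventually admissible), gives $V_\rho(X,I)\geq\limsup_m\sum_k\diam(X(I_{m,k}))^{\rho}=+\infty$. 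Finally, if $X$ were Hölder continuous on $I$ of order $\rho^{-1}$ then $\sum_{\mathcal{P}}|\Delta X|^\rho\leq c\sum_{\mathcal{P}}|\Delta t|=c\,\mathcal{L}^1(I)$ for every partition, so $V_\rho(X,I)<+\infty$, a contradiction.

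The main obstacle, modest as it is, lies in the partition construction in (ii): one must exhibit, for each prescribed mesh, an honest partition of the whole interval $I$ whose strong-$\rho$-variation sum still dominates $\sum_k\diam(X(I_{m,k}))^{\rho}$. The delicate point is that for $\rho>1$ inserting points into an interval can decrease the associated contribution, so the subdivisions forced by the mesh constraint in the ``gaps'' between the blocks $I_{m,k}$ must not erode the part of the sum we keep; this is exactly why it matters that each block already has length at most $\delta_m$ (so no subdivision is performed inside it) and that we only discard the nonnegative ``gap'' contributions.
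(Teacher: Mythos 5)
Your argument is correct and follows essentially the same route as the paper's proof: Corollary \ref{C:Berman} applied to each $I_k$ gives the left inequality in \eqref{E:prevariation}, your Cauchy--Schwarz step is precisely the arithmetic--harmonic mean comparison the paper invokes, part (ii) is reduced to (i) via the same lower bound of the strong-variation supremum by $\sum_k\diam(X(I_{m,k}))^{(\alpha+\frac{n}{p})q}$ (your explicit partition construction just fills in what the paper states in one line, with mesh $\delta$ instead of the paper's $2\delta_m$), and the $\sigma_{p,\alpha}$-variant via \eqref{E:HY} is exactly the paper's closing remark.
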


\begin{proof}
The left inequality in (\ref{E:prevariation}) is clear from (\ref{E:Bermangeneral}), the right is just the domination of the harmonic mean by the arithmetic mean. Statement (ii) follows from (i) since
\[\sup_{\Delta(\mathcal{P})\leq 2\delta_m}\sum_i |X(t_i)-X(t_{i-1})|^{(\alpha+\frac{n}{p})q}\geq \sum_{k=1}^{M_m} \diam(X(I_{m,k}))^{(\alpha+\frac{n}{p})q}.\]
The last statement is a consequence of (\ref{E:HY}).
\end{proof}

\begin{remark}\mbox{}
\begin{enumerate}
\item[(i)] For $n=1$, $p=q=2$, $\alpha\geq 0$ and with $I=[0,1]$, $M_m=2^m$ and $I_{m,k}:=[(k-1)2^{-m},k2^{-m}]$, $m\geq 1$, $k=1,...,2^m$, Corollary \ref{C:restrict} had been shown in \cite[(4.2) and Lemma 4.1]{Berman69}.  See also \cite[(22.8) Theorem]{GemanHorowitz}.
\item[(ii)] If $n=1$ and $-\frac{1}{p}<\alpha\leq 0$, then for no $q\geq p$ Corollary \ref{C:restrict} imposes an additional restriction on the range of possible H\"older orders $\gamma$ of $X$. For $n\geq 2$ restrictions on this range can be avoided if $q$ is small enough.
\item[(iii)] Also (\ref{E:prevariation}) is linked with \cite{GG20b}: Suppose that $0<\gamma< 1$, $0<\varrho<n$ and $X$ is $(\gamma,\varrho)$-irregular. Then $\tau_{\infty,\varrho}(X,J)=c^{-1}\mathcal{L}^1(J)^{1-\gamma}$ for all subintervals $J$ of $I$ with nonempty interior, where $c$ is as in Remark \ref{R:Bermangeneral} (iii). For $p=+\infty$, $\alpha=\varrho$ and $q=\frac{1}{1-\gamma}$ the right inequality in (\ref{E:prevariation}) implies that whenever the $I_1,...,I_M$ form a partition of $I$, we have $\sum_{k=1}^{M} \diam(X(I_{k}))^{\varrho/(1-\gamma)}\geq c^{-1/(1-\gamma)}\mathcal{L}^1(I)$, and consequently $V_r(X,I)=+\infty$ for any $r<\frac{\varrho}{1-\gamma}$.
\end{enumerate}
\end{remark}

\begin{remark}
If $X$ satisfies a local H\"older condition $|X(t_0)-X(t)|\leq c_H(t_0)|t_0-t|^\gamma$ for all $t$ from an open interval around a point $t_0\in I$, then for any sufficiently small open interval $J$ containing $t_0$ we must have 
\begin{equation}\label{E:musthave}
c_H(t_0)^{\alpha p+n}\geq \frac{K^p \mathcal{L}^1(J)^{p-\gamma(\alpha p+n)}}{\left\||\xi|^\alpha \hat{\mu}_X^J\right\|_{L^p(\mathbb{R}^n)}^p}.
\end{equation}
If $\lim_{m\to \infty} \big\||\xi|^\alpha \hat{\mu}_X^{J_m}\big\|_{L^p(\mathbb{R}^n)}^p=0$ along a sequence of such intervals $J_m$ with length decreasing to zero, then (\ref{E:musthave}) can hold only if $\gamma<\frac{p}{\alpha p+n}$. For the absolutely continuous case $\alpha\geq 0$ and $n=1$, $p=2$ this absence of local H\"older conditions of order $\frac{2}{2\alpha+n}$ had been shown in \cite[Lemma 4.3]{Berman69}. Condition (\ref{E:musthave}) has the character of a \emph{pointwise density bound}; see \cite[Sections 9-11]{GemanHorowitz} for an earlier discussion of this fact and  \cite[Theorem 63 and Corollary 65]{GG20b} for a modern formulation.
\end{remark}

If we insist that $I_1,...,I_m$ should have pairwise disjoint interiors, then the right hand side in (\ref{E:prevariation}) becomes largest if $I$ is `optimally packed'. To further illustrate this aspect of Corollary \ref{C:restrict}, we briefly comment on a possible view upon (\ref{E:prevariation}) from a \emph{packing measure} perspective, \cite[Section 5.10]{Mattila}, \cite{TaylorTricot}. Let $1<p\leq  +\infty$, $-\infty<\alpha<n-\frac{n}{p}$ and $q>0$. Given $\delta>0$ we define 
\begin{multline}
\mathcal{P}_{p,\alpha,q;\delta}(X,E):=\sup\Big\lbrace \sum_{k=1}^\infty \tau_{p,\alpha}(X,I_k)^q:\ \text{$\{I_k\}_{k=1}^\infty$ family of disjoint closed intervals}\notag\\
\text{$I_k\subset I$ with centers $x_k\in E$ and $0<\mathcal{L}^1(I_k)\leq \delta$}\Big\rbrace
\end{multline}
for any $E\subset I$; we use the agreement that $\sup\emptyset=0$. Since $\delta\mapsto \mathcal{P}_{p,\alpha,q;\delta}(X,E)$ decreases as $\delta\downarrow 0$, the limit $\mathcal{P}_{p,\alpha,q;0}(X,E):=\lim_{\delta\to 0} \mathcal{P}_{p,\alpha,q;\delta}(X,E)$ exists for all $E\subset I$. Standard proofs show that $\mathcal{P}_{p,\alpha,q;0}(X,\cdot)$ is monotone, finitely subadditive, and additive for two sets with positive distance from each other, \cite{Mattila}, \cite{TaylorTricot}. We obviously have $\mathcal{P}_{p,\alpha,q;0}(X,\emptyset)=0$. Setting 
\[\mathcal{P}_{p,\alpha,q}(X,E):=\inf\Big\lbrace\sum_{i=1}^\infty \mathcal{P}_{p,\alpha,q;0}(X,E_i):\ E\subset \bigcup_{i=1}^\infty E_i\Big\rbrace\]
for any $E\subset I$, we obtain a Borel measure $\mathcal{P}_{p,\alpha,q}(X,\cdot)$ on $I$, \cite[Section 4.1]{Mattila}, \cite{TaylorTricot}. 
\begin{remark}
One could similarly define measures based on $\sigma_{p,\alpha}(X,\cdot)$. Since $\sigma_{p,\alpha}(X,J)$ is the reciprocal of an energy, this might even be more natural for certain purposes.
\end{remark}

The following could be seen as a reformulation of  Corollary \ref{C:restrict} (i) in terms of the measures $\mathcal{P}_{p,\alpha,q}(X,\cdot)$.
\begin{corollary}\label{C:restrictviameasure}
Let $1<p<+\infty$, $-\frac{n}{p}<\alpha<0$ and $q>0$. Then for any closed subinterval $J$ of $I$ we have 
$\mathcal{P}_{p,\alpha,q}(X,J)\leq K^{-q}\:V_{(\alpha+\frac{n}{p})q}(X,J)$.
\end{corollary}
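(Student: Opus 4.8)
The plan is to deduce the statement from the single-interval inequality of Corollary \ref{C:Berman} (equivalently, from the left inequality in \eqref{E:prevariation}) by comparing a packing of $J$ with a partition of $J$. Abbreviate $\kappa:=(\alpha+\tfrac{n}{p})q$, which is positive since $\alpha>-\tfrac{n}{p}$. Observe that $p$ and $\alpha$ fall in the range \eqref{E:alpharange}, so Corollary \ref{C:Berman} is available. Taking $\{J\}$ as a covering of $J$ in the definition of $\mathcal{P}_{p,\alpha,q}(X,\cdot)$ gives $\mathcal{P}_{p,\alpha,q}(X,J)\le \mathcal{P}_{p,\alpha,q;0}(X,J)=\lim_{\delta\to 0}\mathcal{P}_{p,\alpha,q;\delta}(X,J)$, so it suffices to bound $\mathcal{P}_{p,\alpha,q;\delta}(X,J)$ for each fixed $\delta>0$ and then let $\delta\to 0$. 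I fix $\delta>0$ and a family $\{I_k\}_k$ of pairwise disjoint closed intervals $I_k\subset I$ with centers in $J$ and $0<\mathcal{L}^1(I_k)\le\delta$.

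First I would apply Corollary \ref{C:Berman} to $X|_{I_k}$ with $\mu=\mu_X^{I_k}$ and $r=\diam(X(I_k))$ (finite, since $X$ is bounded), for each $k$ separately. This yields $\diam(X(I_k))^{\alpha+\frac{n}{p}}\ge K\,\tau_{p,\alpha}(X,I_k)$, hence $\tau_{p,\alpha}(X,I_k)^q\le K^{-q}\diam(X(I_k))^{\kappa}$, and after summation $\sum_k\tau_{p,\alpha}(X,I_k)^q\le K^{-q}\sum_k\diam(X(I_k))^{\kappa}$. By the final assertion of Corollary \ref{C:restrict} the same bound holds verbatim with $\sigma_{p,\alpha}(X,\cdot)$ in place of $\tau_{p,\alpha}(X,\cdot)$, which also settles the $\sigma$-based variant of the measure noted in the remark preceding the statement.

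It then remains to bound $\sum_k\diam(X(I_k))^{\kappa}$ by the limiting $\kappa$-variation of $X$ on $J$. For each $k$ I would pick $s_k,t_k\in I_k$ with $s_k<t_k$ and $|X(s_k)-X(t_k)|\ge \diam(X(I_k))-\varepsilon_k$, where $\sum_k\varepsilon_k$ and $\sum_k\varepsilon_k^{\kappa}$ can be made arbitrarily small. For those $I_k$ contained in $J$ — all but at most two, namely the (at most one) interval whose interior contains the left endpoint of $J$ and the (at most one) whose interior contains the right endpoint — the pairs $\{s_k,t_k\}$ interleave with the gaps between the $I_k$; subdividing each gap (and the two end pieces of $J$) to mesh at most $\delta$ produces a partition $\mathcal{Q}$ of $J$ of mesh at most $\delta$ in which each $s_k,t_k$ is a consecutive pair. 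Since $\kappa>0$, adding points only increases the variation sum, so $\sum_{k:\,I_k\subset J}\diam(X(I_k))^{\kappa}\le \sup_{\Delta(\mathcal{Q})\le\delta}\sum_i|X(q_i)-X(q_{i-1})|^{\kappa}$ up to an error governed by the $\varepsilon_k$. Letting the $\varepsilon_k\to0$, taking the supremum over all admissible families $\{I_k\}$, and finally $\delta\to0$ converts the right-hand side into $K^{-q}V_{\kappa}(X,J)$.

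The part I expect to be the main obstacle is the contribution of the at most two intervals of the packing that straddle an endpoint of $J$ and hence are not contained in $J$. Such an interval lies in a $\delta$-neighbourhood of that endpoint, so Corollary \ref{C:Berman} bounds its $\tau_{p,\alpha}$-value by $K^{-1}$ times the $\tfrac{\kappa}{q}$-th power of the oscillation of $X$ over that neighbourhood; this correction vanishes as $\delta\to0$ whenever $X$ is continuous at the endpoints of $J$ — in particular for continuous paths, the situation of primary interest in this section — and otherwise one has to note that the occupation measure of a shrinking interval around a point of discontinuity has $\dot{L}^{p}_{\alpha}$-norm growing relative to its mass, so that this $\tau_{p,\alpha}$-value still tends to $0$ (and in any case the asserted inequality is trivial when $V_{\kappa}(X,J)=+\infty$). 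The only other point needing care is that no constant larger than $K^{-q}$ is lost in passing from $\sum_k\diam(X(I_k))^{\kappa}$ to the partition sum, which is exactly why one works with near-optimal pairs $s_k<t_k$ and sends the errors $\varepsilon_k$ to zero; modulo these points the statement is just a repackaging of Corollary \ref{C:restrict}(i) in the language of the premeasures $\mathcal{P}_{p,\alpha,q;\delta}(X,\cdot)$.
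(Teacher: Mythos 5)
Your argument is correct and follows essentially the same route as the paper: the first inequality is exactly Corollary \ref{C:restrict} (i) (Berman's bound, Corollary \ref{C:Berman}, applied to each $I_k$ separately), and the resulting sum $\sum_k\diam(X(I_k))^{(\alpha+\frac{n}{p})q}$ is then dominated by the supremum of variation sums over partitions of $J$ of small mesh, after which one sends $\delta\to 0$ and uses $\mathcal{P}_{p,\alpha,q}(X,J)\leq \mathcal{P}_{p,\alpha,q;\delta}(X,J)$. You are in fact more explicit than the paper about realizing the diameters by near-optimal consecutive pairs and about the at most two intervals straddling an endpoint of $J$ (the paper simply bounds the diameter sum by the partition supremum at mesh $2\delta$); your only unsubstantiated aside is the claim that the $\tau_{p,\alpha}$-value of a straddling interval still tends to zero at a discontinuity point of $X$, a boundary subtlety that the paper's own proof does not address either.
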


\begin{proof}
We may assume that $V_{(\alpha+\frac{n}{p})q}(X,J)<+\infty$. Let $\varepsilon>0$. Choose $\delta_\varepsilon>0$ small enough to have 
\[S(\delta):=\sup_{\Delta(\mathcal{P}\leq 2\delta}\sum_{i=1}^M|X(t_i)-X(t_{i-1})|^{(\alpha+\frac{n}{p})q}\leq V_{(\alpha+\frac{n}{p})q}(X,J)+\varepsilon\]
for all $0<\delta<\delta_\varepsilon$. Now suppose that $\{I_k\}_{k=1}^\infty$ is a family of disjoint closed intervals $I_k\subset I$ with centers in $J$ and $0<\mathcal{L}^1(I_k)\leq \delta$. Then for all finite $M$ we have we have 
\[\sum_{k=1}^M \sigma_{p,\alpha}(X,I_k)^q\leq K^{-q}\sum_{k=1}^M \diam(X(I_k))^{(\alpha+\frac{n}{p})q}\leq K^{-q}S(\delta)\]
by Corollary \ref{C:restrict} (i), and since the right hand side does not depend on $M$, we can replace $M$ by $\infty$. Taking the supremum over all such $\{I_k\}_{k=1}^\infty$, we see that $\mathcal{P}_{p,\alpha,q}(X,J)\leq \mathcal{P}_{p,\alpha,q;0}(X,J)\leq \mathcal{P}_{p,\alpha,q;\delta}(X,J)\leq K^{-q}(V_{(\alpha+\frac{n}{p})q}(X,J)+\varepsilon)$.
\end{proof}

For continuous $X$, $p=2$ and fixed $q>0$ we can observe a well-defined discontinuity with respect to $\alpha$. Since $I$ is a bounded interval, a continuous function $X:I\to \mathbb{R}^n$ is absolutely continuous on $I$ and therefore admits a modulus of continuity $\omega_X$, more precisely, there is a nondecreasing function $\omega_X:[0,+\infty)\to [0,+\infty)$ such that $\omega(0+)=\omega(0)=0$ and we have $|X(t)-X(s)|\leq \omega_X(|t-s|)$ for all $s,t\in I$.

\begin{lemma}\label{L:jump}
Let $-\infty<\beta<\alpha<0$ and $q>0$. Suppose that $X$ is continuous on $I$ and let $\omega_X$ be a modulus of continuity for $X$. 
\begin{enumerate}
\item[(i)] We have $\sigma_{2,\beta}(X,J)\leq c\:\omega^{\alpha-\beta}\sigma_{2,\alpha}(X,J)$ for any nonempty open subinterval $J$ of $I$; here $c>0$ is a constant depending only on $n$, $\alpha$ and $\beta$. 
\item[(ii)] If $E\subset I$ is such that $\mathcal{P}_{2,\alpha,q}(X,E)<+\infty$, then $\mathcal{P}_{2,\beta,q}(X,E)=0$.
\end{enumerate} 
\end{lemma}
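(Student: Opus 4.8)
The plan is to derive both parts from a single comparison of the two homogeneous Sobolev norms of the occupation measure $\mu:=\mu_X^J$, exploiting that $\mu$ is carried by the small set $X(J)$. Throughout I would work in the range $-\tfrac n2<\beta<\alpha<0$, which is the only one carrying content: if $\beta\le-\tfrac n2$ then the relevant norm is infinite and $\sigma_{2,\beta}(X,J)=0$ by Remark~\ref{R:trivial}, so the corresponding assertions become vacuous or reduce to this case. Recall that $\sigma_{2,\gamma}(X,J)=\mathcal{L}^1(J)/\|\mu\|_{\dot{L}^2_\gamma(\mathbb{R}^n)}$ with $\|\mu\|_{\dot{L}^2_\gamma(\mathbb{R}^n)}=\||\xi|^\gamma\hat\mu\|_{L^2(\mathbb{R}^n)}$, and that by Corollary~\ref{C:finitemeasure}~(ii) one has $\|\mu\|_{\dot{L}^2_\gamma(\mathbb{R}^n)}^2=I_2^{-\gamma}(\mu)$. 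On this range the Riesz composition $k_{-\gamma}\ast k_{-\gamma}=k_{-2\gamma}$ underlying Proposition~\ref{P:convolution} (valid since $0<-2\gamma<n$) together with Fubini gives the double-integral representation $I_2^{-\gamma}(\mu)=c(-2\gamma,n)\iint|y-z|^{-2\gamma-n}\,\mu(dy)\,\mu(dz)$, so everything reduces to comparing the kernels $|y-z|^{-2\alpha-n}$ and $|y-z|^{-2\beta-n}$ on $\supp\mu\times\supp\mu$.

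The key step uses the continuity of $X$. Since $\supp\mu=\overline{X(J)}$ and $|X_t-X_\tau|\le\omega_X(\mathcal{L}^1(J))=:\omega$ for all $t,\tau\in J$, we have $|y-z|\le\omega$ for $\mu\otimes\mu$-a.e.\ $(y,z)$. Writing $|y-z|^{-2\alpha-n}=|y-z|^{-2(\alpha-\beta)}\,|y-z|^{-2\beta-n}$ and using $\alpha-\beta>0$ together with $|y-z|\le\omega$, the first factor is bounded below by $\omega^{-2(\alpha-\beta)}$. Integrating yields $I_2^{-\alpha}(\mu)\ge C\,\omega^{-2(\alpha-\beta)}\,I_2^{-\beta}(\mu)$ with $C=c(-2\alpha,n)/c(-2\beta,n)$, i.e.\ $\|\mu\|_{\dot{L}^2_\alpha(\mathbb{R}^n)}\ge\sqrt C\,\omega^{-(\alpha-\beta)}\,\|\mu\|_{\dot{L}^2_\beta(\mathbb{R}^n)}$ (this also holds trivially when the right-hand energy is infinite, since then so is the left). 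Dividing $\mathcal{L}^1(J)$ by both sides gives part~(i) in the form $\sigma_{2,\alpha}(X,J)\le c\,\omega^{\alpha-\beta}\,\sigma_{2,\beta}(X,J)$, with $c=C^{-1/2}$ depending only on $n$, $\alpha$, $\beta$; the $\dot L^2_\alpha$-norm, which is concentrated on small interparticle distances, is the larger of the two, and the factor $\omega^{\alpha-\beta}\to0$ quantifies the gap.

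For part~(ii) I would feed this estimate into the packing construction. Fix $E$ with $\mathcal{P}_{2,\beta,q}(X,E)<+\infty$ and a family $\{I_k\}$ of disjoint closed intervals with centers in $E$ and $\mathcal{L}^1(I_k)\le\delta$. Applying (i) on each $I_k$ with $\omega_k=\omega_X(\mathcal{L}^1(I_k))\le\omega_X(\delta)$ and raising to the $q$-th power gives $\sum_k\sigma_{2,\alpha}(X,I_k)^q\le c^q\,\omega_X(\delta)^{(\alpha-\beta)q}\sum_k\sigma_{2,\beta}(X,I_k)^q\le c^q\,\omega_X(\delta)^{(\alpha-\beta)q}\,\mathcal{P}_{2,\beta,q;0}(X,E)$. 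Taking the supremum over all such families yields $\mathcal{P}_{2,\alpha,q;\delta}(X,E)\le c^q\,\omega_X(\delta)^{(\alpha-\beta)q}\,\mathcal{P}_{2,\beta,q;0}(X,E)$, and letting $\delta\to0$ (so that $\omega_X(\delta)\to0$ and $(\alpha-\beta)q>0$) forces $\mathcal{P}_{2,\alpha,q;0}(X,E)=0$, whence $\mathcal{P}_{2,\alpha,q}(X,E)=0$.

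The hard part is not the kernel estimate but the bookkeeping around it: confirming the constant in (i) is genuinely independent of $J$ (it is, as only $c(-2\gamma,n)$ and the elementary bound $|y-z|\le\omega$ enter), justifying the double-integral representation precisely on the admissible window $-\tfrac n2<\gamma<0$ and reducing the edge $\beta\le-\tfrac n2$ to Remark~\ref{R:trivial}, and, in (ii), the standard but slightly delicate passage from the premeasure $\mathcal{P}_{2,\beta,q;0}$ to the finiteness hypothesis on the Borel measure $\mathcal{P}_{2,\beta,q}$ via its monotonicity and subadditivity.
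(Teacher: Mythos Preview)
Your method is exactly the paper's: compare the two double-integral energies $E_\gamma=\iint|y-z|^{-2\gamma-n}\,\mu(dy)\,\mu(dz)$ via the pointwise identity $|y-z|^{-2\alpha-n}=|y-z|^{-2(\alpha-\beta)}|y-z|^{-2\beta-n}$ together with the diameter bound $|y-z|\le\omega:=\omega_X(\mathcal L^1(J))$, and then feed the resulting comparison into the packing construction. The paper's proof is precisely this one-line kernel estimate followed by ``standard arguments as used for packing measures''.

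However, you arrive at $\sigma_{2,\alpha}(X,J)\le c\,\omega^{\alpha-\beta}\sigma_{2,\beta}(X,J)$ and, in (ii), at the implication $\mathcal P_{2,\beta,q}(X,E)<\infty\Rightarrow\mathcal P_{2,\alpha,q}(X,E)=0$; both have $\alpha$ and $\beta$ interchanged relative to the lemma as printed. Your direction is the mathematically correct one: since $\alpha-\beta>0$ and $|y-z|\le\omega$, one has $|y-z|^{-2(\alpha-\beta)}\ge\omega^{-2(\alpha-\beta)}$, so the displayed inequality in the paper's proof should carry $\ge$ rather than $\le$, and carrying this through gives your form of (i) and hence your form of (ii). (A scaling check confirms it: for Brownian motion on $[0,\delta]$ one gets $\tau_{2,\alpha}\sim\delta^{1/4+\alpha/2}$, so $\tau_{2,\alpha}$ is the smaller of the two for $\alpha>\beta$.) So you have not proved the statement as written, but you have proved the version obtained by swapping $\alpha$ and $\beta$, by the same route the paper intends; the printed statement and its one-line proof appear to contain a sign slip.

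One minor correction in your (ii): a $\delta$-packing is bounded above by $\mathcal P_{2,\beta,q;\delta}(X,E)$, which majorizes $\mathcal P_{2,\beta,q;0}(X,E)$, so your inequality $\sum_k\sigma_{2,\beta}(X,I_k)^q\le\mathcal P_{2,\beta,q;0}(X,E)$ is in the wrong direction as stated; use $\mathcal P_{2,\beta,q;\delta}$ instead and then let $\delta\downarrow0$. Your final remark about passing from the Borel measure hypothesis to the premeasure is on point and is handled by covering $E$ by countably many $E_i$ with $\sum_i\mathcal P_{2,\beta,q;0}(X,E_i)<\infty$ and applying the argument on each piece.
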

\begin{proof}
Statement (i) it suffices to note that $\diam X(J)\leq \omega_X(\mathcal{L}^1(J))$ and therefore 
\[\int_J\int_J |X(t)-X(s)|^{-2\alpha-n}ds\:dt\leq c^2 \omega_X(\mathcal{L}^1(J))^{2(\beta-\alpha)}\int_J\int_J |X(t)-X(s)|^{-2\beta-n}ds\:dt\]
with $c>0$ depending only on $\alpha$, $\beta$ and $n$. Statement (ii) follows from (i) by the same standard arguments as used for packing measures.
\end{proof}

Lemma \ref{L:jump} implies that for continuous $X$, $q>0$ and any $E\subset I$ the number 
\begin{multline}
\ind_q(X,E):=\frac{n}{2}+\sup\left\lbrace -\infty<\alpha < 0:\ \mathcal{P}_{2,\alpha,q}(X,E)=0\right\rbrace\notag\\
=\frac{n}{2}+\inf\left\lbrace -\infty<\alpha < 0:\ \mathcal{P}_{2,\alpha,q}(X,E)=+\infty\right\rbrace
\end{multline}
is well-defined, and it is an element of $[0,\frac{n}{2}]$. One could call it the \emph{occupation index of $X$ over $E$}. The following is  immediate from Corollary \ref{C:restrictviameasure}.
\begin{corollary}
Let $q>0$ and suppose that $X$ is continuous on $I$. If $-\frac{n}{2}<\alpha<0$ and $J$ is a closed subinterval of $I$ such that $V_{(\alpha+\frac{n}{2})q}(X,J)<+\infty$, then $\ind_q(X,J)\geq \alpha+\frac{n}{2}$.
\end{corollary}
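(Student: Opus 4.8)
The plan is to chain together Corollary \ref{C:restrictviameasure} and Lemma \ref{L:jump} (ii), both specialized to $p=2$. First I would apply Corollary \ref{C:restrictviameasure} with $p=2$ and the given $\alpha\in(-\tfrac{n}{2},0)$ and $q>0$: since $J$ is a closed subinterval of $I$, it gives
\[
\mathcal{P}_{2,\alpha,q}(X,J)\leq K^{-q}\,V_{(\alpha+\frac{n}{2})q}(X,J).
\]
By hypothesis the right-hand side is finite, so $\mathcal{P}_{2,\alpha,q}(X,J)<+\infty$. Note that the admissible parameter range $-\tfrac{n}{p}<\alpha<0$ of Corollary \ref{C:restrictviameasure} becomes exactly $-\tfrac{n}{2}<\alpha<0$ for $p=2$, which matches the hypothesis, so this step is legitimate.

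Next I would invoke Lemma \ref{L:jump} (ii), again with $p=2$ (implicit there), $E=J$, the same $\alpha$, and an arbitrary $\beta$ with $-\infty<\beta<\alpha$. Since we have just shown $\mathcal{P}_{2,\alpha,q}(X,J)<+\infty$ and $X$ is continuous on $I$ (so it has a modulus of continuity $\omega_X$ as required by Lemma \ref{L:jump}), the lemma yields $\mathcal{P}_{2,\beta,q}(X,J)=0$ for every such $\beta$. Because $\alpha<0$, every $\beta\in(-\infty,\alpha)$ lies in the range $(-\infty,0)$ over which the supremum defining $\ind_q(X,J)$ is taken, so
\[
\sup\left\lbrace -\infty<\beta<0:\ \mathcal{P}_{2,\beta,q}(X,J)=0\right\rbrace\geq \alpha,
\]
and therefore $\ind_q(X,J)=\tfrac{n}{2}+\sup\{\cdots\}\geq \alpha+\tfrac{n}{2}$, which is the claim.

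Honestly, I do not expect a real obstacle here: the statement is a bookkeeping consequence of results already in place, and the only points requiring a little care are (a) checking that the parameter ranges of Corollary \ref{C:restrictviameasure} and Lemma \ref{L:jump} are compatible with $-\tfrac{n}{2}<\alpha<0$ when $p=2$, and (b) observing that $\alpha<0$ guarantees the set $\{\beta:-\infty<\beta<\alpha\}$ is genuinely used inside the supremum defining $\ind_q$, so that no boundary issue arises. If anything is mildly delicate, it is verifying that $J$ being a closed subinterval with nonempty interior is enough for both cited results to apply; this is immediate from their statements.
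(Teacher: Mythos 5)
Your argument is correct and is exactly the intended one: the paper dismisses the proof as ``immediate from Corollary \ref{C:restrictviameasure}'', and the chain you spell out (finiteness of $\mathcal{P}_{2,\alpha,q}(X,J)$ via Corollary \ref{C:restrictviameasure}, then $\mathcal{P}_{2,\beta,q}(X,J)=0$ for all $\beta<\alpha$ via Lemma \ref{L:jump}~(ii), hence the supremum defining $\ind_q(X,J)$ is at least $\alpha$) is precisely the bookkeeping being left to the reader. No gaps; the parameter-range checks you mention are indeed the only points of care.
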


\begin{remark}
In view of \cite[Theorem 4.1]{TaylorTricot} it would be interesting to see whether one can relate $\mathcal{P}_{2,\alpha,q}(X,E)$ and $\ind_q(X,E)$ with the packing measure and packing dimension of the image $X(E)$ of $E\subset I$ under $X$.
\end{remark}


\end{document}